\documentclass[11pt]{amsart}
\usepackage[a4paper,margin=1.15in]{geometry}
\usepackage{amsmath,amssymb,amsthm,mathtools}
\usepackage{enumitem}
\usepackage{hyperref}
\usepackage{aliascnt}
\usepackage[nameinlink,capitalize]{cleveref}
\usepackage{microtype}
\hypersetup{colorlinks=true,linkcolor=blue,citecolor=blue,urlcolor=blue}

\newtheorem{theorem}{Theorem}[section]
\newaliascnt{lemma}{theorem}
\newtheorem{lemma}[lemma]{Lemma}
\aliascntresetthe{lemma}
\newaliascnt{proposition}{theorem}
\newtheorem{proposition}[proposition]{Proposition}
\aliascntresetthe{proposition}
\newaliascnt{corollary}{theorem}
\newtheorem{corollary}[corollary]{Corollary}
\aliascntresetthe{corollary}
\newaliascnt{problem}{theorem}

\aliascntresetthe{problem}
\theoremstyle{definition}
\newaliascnt{definition}{theorem}
\newtheorem{definition}[definition]{Definition}
\aliascntresetthe{definition}
\newaliascnt{example}{theorem}
\newtheorem{example}[example]{Example}
\aliascntresetthe{example}
\theoremstyle{remark}
\newaliascnt{remark}{theorem}
\newtheorem{remark}[remark]{Remark}
\aliascntresetthe{remark}
\crefname{theorem}{Theorem}{Theorems}
\crefname{lemma}{Lemma}{Lemmas}
\crefname{proposition}{Proposition}{Propositions}
\crefname{corollary}{Corollary}{Corollaries}
\crefname{definition}{Definition}{Definitions}
\crefname{example}{Example}{Examples}
\crefname{remark}{Remark}{Remarks}
\Crefname{theorem}{Theorem}{Theorems}
\Crefname{lemma}{Lemma}{Lemmas}
\Crefname{proposition}{Proposition}{Propositions}
\Crefname{corollary}{Corollary}{Corollaries}
\Crefname{definition}{Definition}{Definitions}
\Crefname{example}{Example}{Examples}
\Crefname{remark}{Remark}{Remarks}

\newcommand{\Cstar}{\mathbb C^*}

\newcommand{\Z}{\mathbb Z}
\newcommand{\Id}{\mathrm{Id}}

\newcommand{\ph}{\Phi}
\newcommand{\ps}{\Psi}

\newcommand{\OO}{\mathcal O}

\numberwithin{equation}{section}

\newcommand{\qbase}{\texorpdfstring{\lowercase{$q$}}{q}}
\title[Well-poised basic \qbase-Taylor expansions]{Well-poised basic \qbase-Taylor expansions with complementary remainders and a two-basis kernel}
\author{Abdulhafeez A. Abdulsalam}
\address{Faculty of Mathematics, University of Vienna, Oskar-Morgenstern-Platz 1, A-1090 Vienna, Austria}
\email{hafeez147258369@gmail.com}
\thanks{The first author acknowledges support from a Vienna African Scholarship of the University of Vienna}
\author{Michael J. Schlosser}
\address{Faculty of Mathematics, University of Vienna, Oskar-Morgenstern-Platz 1, A-1090 Vienna, Austria}
\email{michael.schlosser@univie.ac.at}
\thanks{The second author's research was partly supported by FWF Austrian Science Fund grant \href{https://www.doi.org/10.55776/P32305}{10.55776/P32305}.}
\date{May 21, 2026}

\subjclass[2020]{Primary 33D15; Secondary 33D45, 39A13, 41A58}
\keywords{Askey--Wilson operator, well-poised basic hypergeometric series, $q$-Taylor expansion, remainder term, nonterminating ${}_8\phi_7$ summation, two-basis kernel}

\begin{document}
\begin{abstract}
We prove a nonterminating well-poised basic $q$-Taylor expansion with complementary remainders for a two-basis infinite-product kernel implicitly proposed by the second author in \cite[Sec.~5]{Schlosser2008}.  The well-poised parameter $c$ gives the rational $p=0$ basis, while the elliptic nome $p$ is a separate deformation; the infinite expansions treated here are specific to the basic case.  We compute the two Taylor coefficient families and show that each one-family Taylor remainder tends to the complementary basis contribution.  The proof uses the well-poised Cooper formula, Jackson's terminating ${}_8\phi_7$ summation, Rogers' ${}_6\phi_5$ summation, and theta interpolation, but not Bailey's nonterminating ${}_8\phi_7$ summation, which is recovered as a consequence.  We also record two quadratic one-family examples and discuss a multi-kernel outlook.
\end{abstract}

\maketitle

\section{Introduction}

Taylor expansions associated with the Askey--Wilson divided-difference operator \cite{AskeyWilson1985} are built not from ordinary monomials, but from basis functions adapted to the multiplicative $q$-grid.  In the polynomial case this point of view goes back to Ismail's $q$-Taylor theorem \cite{Ismail1995}; it was developed further in \cite{Ismail2001Operator,Ismail2005,IsmailStanton2003,IsmailStanton2003JCAM,LopezMarcoParcet2006,MarcoParcet2006}.

The second author's elliptic Taylor theorem \cite{Schlosser2008} showed that the same idea extends to well-poised rational bases and to an elliptic deformation of the Askey--Wilson operator.  The broader elliptic framework comes from Rains' $BC_n$-symmetric difference-operator theory \cite{Rains2006,Rains2010}; further elliptic Taylor and interpolation consequences, including an elliptic Cooper formula, appear in \cite{SchlosserYoo2016}.

We keep two parameters conceptually separate.  The denominator parameter $c$ in $D_{c,q}$ and in the rational monomials $\ph_k(z;a,c)$ is the well-poised parameter: it replaces the ordinary Askey--Wilson monomial basis by a rational basis on which the modified operator still lowers degree.  The nome $p$ is the elliptic deformation.  With $p$ retained, Taylor expansions have usually been formulated as finite expansions because nonterminating elliptic convergence is too restrictive; compare \cite[Ch.~11]{GasperRahman2004}.  At $p=0$, however, theta products become basic $q$-shifted factorials, and genuinely infinite well-poised expansions can converge on suitable pole-free compact sets.  We therefore treat the $p=0$ specialization as the basic well-poised theory in its own right and refer to the $p=0$ elliptic Cooper formula as the \emph{well-poised Cooper formula}.

The finite well-poised theory has a particularly simple basic specialization.  Write
\[
        x=\frac12(z+z^{-1})
\]
and consider symmetric functions $f(z)=f(z^{-1})$.  For parameters $a,c$ set
\begin{equation}\label{eq:Phi-def}
        \ph_k(z;a,c):=\frac{(az,a/z;q)_k}{(cz,c/z;q)_k},\qquad k\geq 0.
\end{equation}
When $c=0$, this reduces to the ordinary Askey--Wilson monomial $(az,a/z;q)_k$.  The well-poised divided-difference operator $D_{c,q}$ lowers the degree of these rational monomials.  This gives finite Taylor coefficient extraction exactly as in the second author's elliptic theorem.

The nonterminating problem is more delicate.  The outlook in \cite[Sec.~5]{Schlosser2008} proposed the kernel
\begin{equation}\label{eq:intro-kernel}
F(z):=\frac{(cz/d,c/dz,cz/e,c/ez;q)_\infty}
{(cz,c/z,c^2z/bde,c^2/bdez;q)_\infty}.
\end{equation}
The expected expansion involves two complementary basis families,
\begin{equation}\label{eq:intro-bases}
\ph_k(z;b,c)=\frac{(bz,b/z;q)_k}{(cz,c/z;q)_k},\qquad
\ps_k(z;b,c,d,e)=\frac{(cz/de,c/dez;q)_k}{(c^2z/bde,c^2/bdez;q)_k}.
\end{equation}
A one-family Taylor theorem is not enough to explain this phenomenon, because broad meromorphic classes contain nonzero functions whose Taylor coefficients all vanish.  The two-basis expansion is therefore best understood as a statement about \emph{complementary limiting remainders}: the Taylor remainder for the first normalized kernel converges to the second basis contribution, and conversely.

The purpose of this paper is to make this point explicit and to prove the resulting kernel theorem.  The main result, \cref{thm:two-basis-kernel}, gives the identity
\[
F=A\,H(b)\sum_{k\ge0} f_k\ph_k+B\,K(c/de)\sum_{k\ge0}g_k\ps_k,
\]
where $A$ and $B$ are the natural infinite-product prefactors and the coefficients $f_k$ and $g_k$ are very-well-poised terms.  The factors $H(b)$ and $K(c/de)$ are essential: each is the zeroth Taylor coefficient of the corresponding normalized kernel.  The complementary-remainder form is given in \cref{cor:remainder-limit}.

Thus the central object is not just finite coefficient extraction, but a basic well-poised $q$-Taylor theory in which infinite expansions and exact remainders are part of the statement.  For the kernel above, the limiting remainder is complementary rather than zero.

Sections~2--7 set up the finite well-poised Taylor calculus, the well-poised Cooper formula, exact remainders, flat functions, and convergence criteria.  Section~8 states the two-basis kernel theorem.  Sections~9--11 prove it without Bailey's nonterminating ${}_8\phi_7$ summation by reducing the pole-cleared residual to annular profile cancellations.  The proof section also records two one-family quadratic examples, a short Bailey verification is included after the direct proof, and the final section discusses a multi-kernel outlook toward Bailey's four-term nonterminating ${}_{10}\phi_9$ transformation.

\section{Notation and analytic conventions}

Throughout the paper we assume
\begin{equation}\label{eq:q-assumption}
        0<|q|<1.
\end{equation}
For $n\in\Z_{\ge0}\cup\{\infty\}$ we write
\[
(a;q)_n=\prod_{j=0}^{n-1}(1-aq^j),\qquad
(a_1,\ldots,a_m;q)_n=\prod_{r=1}^m(a_r;q)_n.
\]
We also use the multiplicative theta notation
\[
        \theta(u;q)=(u,q/u;q)_\infty,
        \qquad \theta(u_1,\ldots,u_m;q)=\prod_{r=1}^m\theta(u_r;q).
\]
The theta identities used below are the standard Weierstrass--Riemann addition formula and its elliptic partial-fraction generalization; we cite Whittaker and Watson \cite[Ch.~20]{WhittakerWatson1927} as a classical reference.
All parameters are assumed to be generic unless explicitly stated otherwise; in theorem statements involving infinite products this means that all denominator products in the displayed formulas are nonzero on the domain under consideration.

A domain $\Omega\subset\Cstar$ is called symmetric if $z\in\Omega$ implies $z^{-1}\in\Omega$.  For the basis $\ph_k(z;a,c)$ we call a compact set $L\subset\Cstar$ pole-free if
\[
        L\cap\{cq^m,q^m/c:m\ge0\}=\varnothing.
\]
For the transformed basis $\ps_k$ this pole set becomes
\[
        \{c^2q^m/bde,\,bde\,q^m/c^2:m\ge0\}.
\]
All local uniform convergence statements below are on symmetric compact subsets avoiding the relevant pole sets and accidental denominator zeros.

We use the following compact slash convention throughout.  A product such as $a/bc$ is read as $a/(bc)$, and similarly $a/bcz$ is read as $a/(bcz)$.  Thus, after a slash, the full multiplicative monomial that follows is understood to be in the denominator until the next comma, plus sign, minus sign, semicolon, or closing delimiter.  This convention is standard in compact lists of $q$-shifted factorial parameters and allows us to write, for example, $c/dez$ and $c^2/bdez$ without extra parentheses.

\section{Basic hypergeometric input and avoided input}\label{sec:hyper-input}

We isolate the basic hypergeometric identities used in the proof, together with the one identity deliberately avoided.  This also fixes the convention for the well-poised and very-well-poised terminology used later.  Recall first the standard basic-hypergeometric notation; see Gasper--Rahman \cite[Ch.~1--2]{GasperRahman2004}.  For parameters for which the denominator factors are nonzero, define
\begin{equation}\label{eq:basic-hypergeometric-def}
{}_{r+1}\phi_r\left(\begin{matrix} a_0,a_1,\ldots,a_r\\ b_1,\ldots,b_r\end{matrix};q,z\right)
:=\sum_{k=0}^\infty
\frac{(a_0,a_1,\ldots,a_r;q)_k}{(q,b_1,\ldots,b_r;q)_k}z^k.
\end{equation}
A series of the form \eqref{eq:basic-hypergeometric-def} is called \emph{well-poised} if its parameters satisfy
\[
        a_0q=a_1b_1=a_2b_2=\cdots=a_rb_r.
\]
Thus, after writing $a=a_0$, the denominator parameters are forced to have the paired form $aq/a_j$.  It is called \emph{very-well-poised} if, in addition, $a_1=q\sqrt a$ and $a_2=-q\sqrt a$; the corresponding two denominator parameters are then $\sqrt a$ and $-\sqrt a$, and their quotient contributes the characteristic factor
\[
\frac{(q\sqrt a,-q\sqrt a;q)_k}{(\sqrt a,-\sqrt a;q)_k}
        =
        \frac{1-aq^{2k}}{1-a}
\]
to the $k$th summand.  The square root is only a compact way to encode this cancellation; the final very-well-poised summand is independent of the choice of $\sqrt a$.

The standard compressed notation for this very-well-poised specialization is
\begin{equation}\label{eq:W-notation}
{}_{r+1}W_r(a;b_1,\ldots,b_{r-2};q,z)
:={}_{r+1}\phi_r\left(\begin{matrix}
a,q\sqrt a,-q\sqrt a,b_1,\ldots,b_{r-2}\\
\sqrt a,-\sqrt a,aq/b_1,\ldots,aq/b_{r-2}
\end{matrix};q,z\right)
\end{equation}
or, equivalently,
\begin{equation}\label{eq:W-summand}
{}_{r+1}W_r(a;b_1,\ldots,b_{r-2};q,z)
=\sum_{k=0}^\infty \frac{1-aq^{2k}}{1-a}
\frac{(a,b_1,\ldots,b_{r-2};q)_k}{(q,aq/b_1,\ldots,aq/b_{r-2};q)_k}z^k.
\end{equation}
Thus ${}_6W_5$ and ${}_8W_7$ are the standard very-well-poised forms of ${}_6\phi_5$ and ${}_8\phi_7$.  In the present paper the adjective \emph{well-poised} is used in the same structural sense for the Taylor basis and operator: the additional denominator parameter $c$ creates paired numerator--denominator factors analogous to the pairs $a_j,aq/a_j$ in a well-poised series.  The coefficient series appearing below are very-well-poised in the standard sense.

The first summation used below is Rogers' nonterminating very-well-poised ${}_6\phi_5$ summation
\begin{equation}\label{eq:rogers-6phi5-input}
{}_{6}W_{5}\left(a;b,c,d;q,\frac{aq}{bcd}\right)
=\frac{(aq,aq/bc,aq/bd,aq/cd;q)_\infty}{(aq/b,aq/c,aq/d,aq/bcd;q)_\infty},
\qquad \left|\frac{aq}{bcd}\right|<1.
\end{equation}
We cite Gasper--Rahman \cite[Eq.~(II.20)]{GasperRahman2004} for this standard form; historically, this is Rogers' summation \cite{Rogers1894}.

The second input is Jackson's terminating very-well-poised ${}_8\phi_7$ summation
\begin{equation}\label{eq:jackson-8phi7-input}
{}_{8}W_{7}\left(a;b,c,d,\frac{a^2q^{n+1}}{bcd},q^{-n};q,q\right)
=\frac{(aq,aq/bc,aq/bd,aq/cd;q)_n}{(aq/b,aq/c,aq/d,aq/bcd;q)_n},
\end{equation}
where $n\in\mathbb Z_{\ge0}$.  We use it only in terminating finite-profile arguments.  The original summation is due to Jackson \cite{Jackson1921}; a standard modern reference is Gasper--Rahman \cite[Eq.~(II.22)]{GasperRahman2004}.

The identity deliberately not used in the Taylor-theoretic proof is Bailey's nonterminating extension of Jackson's ${}_8\phi_7$ summation.  In the specialization relevant to the present paper it is exactly the expanded two-basis identity \eqref{eq:two-basis-expanded} below, with $H$ and $K$ defined in \eqref{eq:H-K} and \eqref{eq:K}.  This is the standard nonterminating ${}_8\phi_7$ summation in kernel form; compare Bailey \cite{Bailey1935} and Gasper--Rahman \cite[Appendix (II.25)]{GasperRahman2004}.  In this paper it is derived rather than assumed.

Finally, the theta-function cancellations use the Weierstrass--Riemann addition formula
\begin{equation}\label{eq:weierstrass-addition-input}
\theta(xy,x/y,uv,u/v;q)-\theta(xv,x/v,uy,u/y;q)
=\frac{u}{y}\theta(yv,y/v,xu,x/u;q),
\end{equation}
and the multi-kernel outlook discussed in \cref{sec:examples-outlook} uses Weierstrass' general elliptic partial-fraction identity.  For both we cite Whittaker--Watson \cite[Ch.~20]{WhittakerWatson1927}; for the notation in the basic hypergeometric literature see Gasper--Rahman \cite[Ch.~11]{GasperRahman2004}.

\section{Finite well-poised \texorpdfstring{$q$}{q}-Taylor theory}

The Askey--Wilson divided-difference operator, introduced by Askey and Wilson \cite{AskeyWilson1985}, is
\begin{equation}\label{eq:Dq}
(D_qf)(z)=\frac{f(q^{1/2}z)-f(q^{-1/2}z)}{(q^{1/2}-q^{-1/2})(z-z^{-1})/2}.
\end{equation}
Following the second author \cite{Schlosser2008}, define the well-poised basic operator
\begin{equation}\label{eq:Dcq}
D_{c,q}=\bigl(1-czq^{-1/2}\bigr)\bigl(1-czq^{1/2}\bigr)
       \bigl(1-cq^{-1/2}/z\bigr)\bigl(1-cq^{1/2}/z\bigr)D_q.
\end{equation}
It reduces to $D_q$ when $c=0$.

\begin{lemma}[degree lowering, second author, $p=0$]\label{lem:lowering-Phi}
For $n\ge1$,
\begin{equation}\label{eq:lowering-Phi}
D_{c,q}\ph_n(z;a,c)=
-\frac{2a(1-c/a)(1-acq^{n-1})(1-q^n)}{1-q}
\ph_{n-1}(z;aq^{1/2},cq^{3/2}).
\end{equation}
\end{lemma}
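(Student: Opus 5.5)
The plan is a direct computation. I write out both shifted values $\ph_n(q^{\pm1/2}z;a,c)$, put them over a common denominator, and show that the resulting numerator factors as $(z-z^{-1})$ times a constant times the numerator of $\ph_{n-1}(z;aq^{1/2},cq^{3/2})$.

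First, I split off the common factors. Write
\[
\ph_n(q^{1/2}z;a,c)=\frac{(aq^{1/2}z;q)_n\,(aq^{-1/2}/z;q)_n}{(cq^{1/2}z;q)_n\,(cq^{-1/2}/z;q)_n},
\]
and similarly for $q^{-1/2}z$. In both numerators I pull out the common factor $(aq^{1/2}z,aq^{1/2}/z;q)_{n-1}$, which is exactly the numerator of $\ph_{n-1}(z;aq^{1/2},cq^{3/2})$. The first expression then retains the single factors $(1-aq^{n-1/2}z)(1-aq^{-1/2}/z)$, and the second retains $(1-aq^{-1/2}z)(1-aq^{n-1/2}/z)$. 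For the denominators I use the common denominator $(cq^{-1/2}z,cq^{-1/2}/z;q)_{n+1}$. Multiplying by the four prefactor factors in \eqref{eq:Dcq} cancels the $j=0$ and $j=1$ factors of these products. What remains is $(cq^{3/2}z,cq^{3/2}/z;q)_{n-1}$, which is the denominator of $\ph_{n-1}(z;aq^{1/2},cq^{3/2})$.

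All of the $z$-dependence therefore collapses to a single bracket $P(z)$: the difference of two products of four linear factors in $z^{\pm1}$. This step needs care with the $q$-shift bookkeeping, namely which top factors $(1-cq^{n\pm1/2}z^{\pm1})$ each term needs to reach the common denominator. I will track this carefully, because the exact surviving factors determine whether the bracket has the right degree.

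The key observation is structural. $P(z)$ is a Laurent polynomial in $z$ that is antisymmetric under $z\mapsto z^{-1}$, since the two terms are swapped by inversion. Hence it is divisible by $z-z^{-1}$. I will check that the top coefficients (those of $z^{\pm2}$) cancel; in the two products they come out as $acq^{n-1}$ against $acq^{n-1}$. Once they cancel, $P(z)=\kappa\,(z-z^{-1})$ for a constant $\kappa$. Dividing by $(q^{1/2}-q^{-1/2})(z-z^{-1})/2$ in \eqref{eq:Dq} then gives $D_{c,q}\ph_n=\frac{2\kappa}{q^{1/2}-q^{-1/2}}\,\ph_{n-1}(z;aq^{1/2},cq^{3/2})$.

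It remains to compute $\kappa$. I would read it off from the coefficient of $z^{1}$ in $P(z)$, or evaluate $P$ at a convenient point such as $z=q^{1/2}/a$, where one factor of the second term vanishes. I then need to show that $\kappa$ factors as a multiple of $a(1-c/a)(1-acq^{n-1})(1-q^n)$, and that after the normalization by $q^{1/2}-q^{-1/2}$ the constant is exactly $-2a(1-c/a)(1-acq^{n-1})(1-q^n)/(1-q)$.

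The main obstacle is this final factorization of $\kappa$. The bookkeeping of $q$-shifts in the previous steps is error-prone, and a misplaced half-power changes the constant. As a check I would first confirm two special cases. At $c=0$ the formula must reduce to the classical $D_q(az,a/z;q)_n=-\frac{2a(1-q^n)}{1-q}(aq^{1/2}z,aq^{1/2}/z;q)_{n-1}$. At $c=a$ both sides must vanish, because then $\ph_n\equiv1$ is constant. Together these two cases force the factors $a(1-q^n)$ and $(1-c/a)$. The remaining factor $(1-acq^{n-1})$ can be detected by observing that $P$ vanishes identically when $ac=q^{1-n}$.
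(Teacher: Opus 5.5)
Your proposal is correct and is precisely the direct common-denominator computation that the paper gives as its alternative to specializing the second author's elliptic lowering formula at $p=0$. The step you flag as the main obstacle is immediate. The coefficient of $z$ in your bracket is $(1-acq^{n-1})\bigl[(aq^{-1/2}+cq^{n-1/2})-(aq^{n-1/2}+cq^{-1/2})\bigr]=q^{-1/2}(a-c)(1-q^n)(1-acq^{n-1})$, and dividing by $(q^{1/2}-q^{-1/2})/2$ gives exactly the stated constant.
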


\begin{proof}
This is the $p=0$ specialization of the elliptic degree-lowering formula in \cite[Theorem 4.2 and the preceding computation]{Schlosser2008}; equivalently it follows directly from \eqref{eq:Dcq} by writing the two shifted values of \eqref{eq:Phi-def} over a common denominator and simplifying.
\end{proof}

For $n\ge0$ let $W_n(c)$ be the space of functions of the form
\[
        \frac{p(z)}{(cz,c/z;q)_n},
\]
where $p(z)=p(z^{-1})$ is a Laurent polynomial of degree at most $n$.

\begin{proposition}[finite basis]\label{prop:finite-basis}
For generic $a,c$ the functions
\[
        \ph_0(z;a,c),\ph_1(z;a,c),\ldots,\ph_n(z;a,c)
\]
form a basis of $W_n(c)$.
\end{proposition}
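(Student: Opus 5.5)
The argument has three steps: show that each $\ph_k$ lies in $W_n(c)$, compute $\dim W_n(c)=n+1$, and prove linear independence by a triangularity argument.

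\emph{Membership.} Fix $k\le n$. Multiply numerator and denominator of $\ph_k(z;a,c)$ by $(cq^k z,cq^k/z;q)_{n-k}$. This gives
\[
\ph_k(z;a,c)=\frac{(az,a/z;q)_k\,(cq^kz,cq^k/z;q)_{n-k}}{(cz,c/z;q)_n}.
\]
The numerator is a product of $n$ factors of the form $(1-uz)(1-u/z)=1+u^2-u(z+z^{-1})$. Each such factor is a symmetric Laurent polynomial of degree one, i.e.\ a linear polynomial in $x$. Hence the numerator is a symmetric Laurent polynomial of degree at most $n$, and $\ph_k\in W_n(c)$.

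\emph{Dimension.} The symmetric Laurent polynomials of degree at most $n$ are exactly the polynomials of degree at most $n$ in $x=\frac12(z+z^{-1})$. This space has basis $1,x,\ldots,x^n$, so it has dimension $n+1$. Multiplication by the fixed nonzero function $(cz,c/z;q)_n^{-1}$ is injective, so $\dim W_n(c)=n+1$. It therefore suffices to show that $\ph_0,\ldots,\ph_n$ are linearly independent.

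\emph{Independence.} I would use triangularity with respect to evaluation points. Let $p_k(z)=(az,a/z;q)_k\,(cq^kz,cq^k/z;q)_{n-k}$ be the numerators above, and take the points $z_j=aq^j$ for $j=0,\ldots,n$. The factor $(a/z;q)_k$ vanishes at $z=aq^j$ exactly when $j<k$. So $p_k(z_j)=0$ for $j<k$. The diagonal value $p_k(z_k)$ is
\[
(a^2q^k;q)_k\,(q^{-k};q)_{k}\text{-type factors with the zero removed, times } (acq^{2k},cq^{k}\cdot q^{-k}/a;q)_{n-k}.
\]
For generic $a,c$ this is a nonzero product of factors such as $(q;q)_k$-type terms, $(a^2q^k;q)_k$, $(acq^{2k};q)_{n-k}$ and $(c/a;q)_{n-k}$. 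Indeed, $(a/z;q)_k$ at $z=aq^k$ equals $(q^{-k};q)_k\neq0$.

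The matrix $\bigl(p_k(z_j)\bigr)_{j,k}$ is then lower triangular with nonzero diagonal, so it is invertible. Hence the $p_k$ are linearly independent, and so are the $\ph_k$. Together with the dimension count, this proves they form a basis.

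Alternatively, one could use \cref{lem:lowering-Phi} inductively. $D_{c,q}$ lowers the index and its coefficient is nonzero for generic $a,c$. A vanishing nontrivial combination would therefore, after applying $D_{c,q}$ $m$ times with $m$ the top index, force the top coefficient to vanish. This route needs care, because $D_{c,q}$ changes the parameters $(a,c)$ to $(aq^{1/2},cq^{3/2})$. For that reason I prefer the evaluation argument.

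The only real obstacle is checking that the diagonal entries are nonzero. This amounts to listing the genericity conditions: $a^2q^m\neq1$, $ac q^m\neq1$ and $c/a\neq q^{-m}$ for the relevant ranges of $m$. These conditions are what "generic $a,c$" means in the statement.
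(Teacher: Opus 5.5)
Your proof is correct and is essentially the paper's triangular-interpolation argument on the cleared numerators $(az,a/z;q)_k(cq^kz,cq^k/z;q)_{n-k}$. The only difference is the choice of nodes: the paper evaluates at $z=cq^{n-1},\dots,c$ (zeros of $(cq^k/z;q)_{n-k}$) and finally at $z=a$, whereas you use the Taylor grid $a,aq,\dots,aq^n$. Your nodes work equally well and give the diagonal entries $(a^2q^k,q^{-k};q)_k\,(acq^{2k},c/a;q)_{n-k}$; no factor needs to be ``removed'', since $(q^{-k};q)_k$ is already nonzero. You also make explicit the membership $\ph_k\in W_n(c)$ and the count $\dim W_n(c)=n+1$, which the paper leaves implicit.
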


\begin{proof}
Multiplying by $(cz,c/z;q)_n$, it is enough to prove the linear independence of
\[
(az,a/z;q)_k(cq^kz,cq^k/z;q)_{n-k},\qquad 0\le k\le n.
\]
This is the standard triangular interpolation argument: successively evaluate a putative linear relation at $z=cq^{n-1},cq^{n-2},\ldots,c, a$; for generic parameters only one new coefficient survives at each step.  This is the basic specialization of \cite[Lemma 4.1]{Schlosser2008}.
\end{proof}

The iterated operators are shifted at each step:
\begin{equation}\label{eq:iterated-D}
D_{c,q}^{(0)}=\Id,
\qquad
D_{c,q}^{(k)}=D_{cq^{3(k-1)/2},q}\,D_{c,q}^{(k-1)}\quad(k\ge1).
\end{equation}

\begin{lemma}\label{lem:iterated-lowering}
For $0\le k\le n$,
\begin{equation}\label{eq:iterated-lowering}
D_{c,q}^{(k)}\ph_n(z;a,c)=
(-1)^k\frac{(2a)^kq^{k(k-1)/4}(q;q)_n(c/a,acq^{n-1};q)_k}
{(q;q)_{n-k}(1-q)^k}
\ph_{n-k}(z;aq^{k/2},cq^{3k/2}).
\end{equation}
Consequently,
\begin{equation}\label{eq:delta-property}
\left[D_{c,q}^{(k)}\ph_n(z;a,c)\right]_{z=aq^{k/2}}
=(-1)^k\frac{(2a)^kq^{k(k-1)/4}(q,c/a,acq^{k-1};q)_k}{(1-q)^k}\,\delta_{nk}.
\end{equation}
\end{lemma}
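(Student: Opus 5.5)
We argue by induction on $k$, applying \cref{lem:lowering-Phi} repeatedly with updated parameters. The case $k=0$ is trivial, since $D^{(0)}_{c,q}=\Id$ and every factor on the right of \eqref{eq:iterated-lowering} equals $1$.

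For the inductive step, assume \eqref{eq:iterated-lowering} holds for some $k<n$. By \eqref{eq:iterated-D}, $D^{(k+1)}_{c,q}=D_{cq^{3k/2},q}D^{(k)}_{c,q}$, so we apply $D_{c',q}$ with $c'=cq^{3k/2}$ to $\ph_{n-k}(z;a',c')$, where $a'=aq^{k/2}$. The operator parameter matches the denominator parameter of the basis function, which is exactly what the shift in \eqref{eq:iterated-D} is designed to ensure. \cref{lem:lowering-Phi} with $n\mapsto n-k\ge1$ then gives the factor
\[
-\frac{2a'(1-c'/a')(1-a'c'q^{n-k-1})(1-q^{n-k})}{1-q}
\]
times $\ph_{n-k-1}(z;a'q^{1/2},c'q^{3/2})=\ph_{n-k-1}(z;aq^{(k+1)/2},cq^{3(k+1)/2})$, as required.

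It remains to check the constants:
\begin{itemize}
\item $2a'=2aq^{k/2}$, and multiplying $q^{k(k-1)/4}$ by $q^{k/2}$ gives $q^{(k+1)k/4}$.
\item $c'/a'=(c/a)q^{k}$, which extends $(c/a;q)_k$ to $(c/a;q)_{k+1}$.
\item $a'c'q^{n-k-1}=acq^{2k}q^{n-k-1}=acq^{n-1}q^{k}$, which extends $(acq^{n-1};q)_k$ to $(acq^{n-1};q)_{k+1}$.
\item $(1-q^{n-k})/(q;q)_{n-k}=1/(q;q)_{n-k-1}$.
\end{itemize}
The signs and the powers of $(1-q)$ also accumulate correctly. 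This bookkeeping is the only real content; the one point needing care is that the relevant exponent of $a'c'$ is $2k$, so that the new factor continues the single product $(acq^{n-1};q)_k$ rather than starting a new one.

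For \eqref{eq:delta-property}, evaluate \eqref{eq:iterated-lowering} at $z=aq^{k/2}$.
\begin{itemize}
\item If $k<n$, then $\ph_{n-k}(z;aq^{k/2},\cdot)$ contains the numerator factor $(a q^{k/2}/z;q)_{n-k}$, which vanishes at $z=aq^{k/2}$ because its first factor is $1-1=0$. The denominator is nonzero for generic parameters, so the whole expression vanishes.
\item If $k=n$, then $\ph_0=1$, $(q;q)_n/(q;q)_0=(q;q)_k$, and $(acq^{n-1};q)_k=(acq^{k-1};q)_k$, which yields the stated constant.
\end{itemize}
The case $k>n$ lies outside the stated range $0\le k\le n$, so $\delta_{nk}$ covers exactly the two cases above.
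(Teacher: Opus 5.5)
Your proposal is correct and is essentially the paper's proof. The paper also iterates \cref{lem:lowering-Phi} and records that the $j$th step contributes $-2aq^{j/2}(1-cq^j/a)(1-acq^{n+j-1})(1-q^{n-j})/(1-q)$; your four bookkeeping items, with $j=k$, are exactly this. It then reads off the delta property from the vanishing numerator factor $(1;q)_{n-k}$ of $\ph_{n-k}(z;aq^{k/2},cq^{3k/2})$ at $z=aq^{k/2}$. Your induction only makes this telescoping explicit, including the check that the operator parameter $cq^{3k/2}$ in \eqref{eq:iterated-D} matches the denominator parameter of the current basis function.
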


\begin{proof}
Iterate \cref{lem:lowering-Phi}.  The factors collected at the $j$th step are
\[
-\frac{2aq^{j/2}(1-cq^j/a)(1-acq^{n+j-1})(1-q^{n-j})}{1-q},
\]
for $j=0,\ldots,k-1$, which gives \eqref{eq:iterated-lowering}.  Evaluating at $z=aq^{k/2}$ gives zero unless $n=k$, because $(1;q)_{n-k}$ occurs in the numerator of the last basis factor.
\end{proof}

\begin{theorem}[finite coefficient extraction]\label{thm:finite-coeff}
If $f\in W_n(c)$, then
\begin{equation}\label{eq:finite-expansion}
        f(z)=\sum_{k=0}^n t_k^{(a,c)}(f)\,\ph_k(z;a,c),
\end{equation}
where
\begin{equation}\label{eq:taylor-coeff-finite}
 t_k^{(a,c)}(f)=
 \frac{(-1)^k q^{-k(k-1)/4}(1-q)^k}{(2a)^k(q,c/a,acq^{k-1};q)_k}
 \left[D_{c,q}^{(k)}f(z)\right]_{z=aq^{k/2}}.
\end{equation}
\end{theorem}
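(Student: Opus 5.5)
The argument combines the finite basis result with the delta property of the iterated operators. By \cref{prop:finite-basis}, any $f\in W_n(c)$ can be written uniquely as
\[
f(z)=\sum_{j=0}^n \alpha_j\,\ph_j(z;a,c)
\]
for some coefficients $\alpha_0,\ldots,\alpha_n$. We assume generic $a,c$, so that the normalizing factor $(q,c/a,acq^{k-1};q)_k$ in \eqref{eq:taylor-coeff-finite} is nonzero. It then suffices to show that $\alpha_k=t_k^{(a,c)}(f)$ for each $0\le k\le n$.

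Fix $k$ and apply $D_{c,q}^{(k)}$ to the expansion. The operator $D_{c,q}^{(k)}$ is a composition of the linear operators $D_{cq^{3(j-1)/2},q}$. Each of these is a fixed multiplier times $D_q$, and $D_q$ is linear in $f$. Hence $D_{c,q}^{(k)}$ acts termwise on the finite sum:
\[
D_{c,q}^{(k)}f=\sum_{j=0}^n\alpha_j\,D_{c,q}^{(k)}\ph_j(z;a,c).
\]
Next we evaluate at $z=aq^{k/2}$ and use \eqref{eq:delta-property}. For $j<k$ we need $D_{c,q}^{(k)}\ph_j=0$. This case is not covered by \cref{lem:iterated-lowering}, which assumes $k\le n$. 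It does follow from \eqref{eq:iterated-lowering} with $k=j$: after $j$ steps $\ph_j$ has been reduced to a constant multiple of $\ph_0=1$. Applying one further operator gives zero, since $D_q1=0$ and the prefactor in \eqref{eq:Dcq} is finite.

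For $k\le j\le n$, \eqref{eq:delta-property} applied with $n$ replaced by $j$ shows that only the term $j=k$ survives, and it carries the factor
\[
(-1)^k(2a)^kq^{k(k-1)/4}(q,c/a,acq^{k-1};q)_k/(1-q)^k.
\]
Solving for $\alpha_k$ gives exactly the coefficient in \eqref{eq:taylor-coeff-finite}.

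The main point to check is that evaluating at $z=aq^{k/2}$ is legitimate. The functions $D_{c,q}^{(k)}\ph_j$ must not have poles there for generic parameters. By \eqref{eq:iterated-lowering}, these functions are multiples of $\ph_{j-k}(z;aq^{k/2},cq^{3k/2})$. Their denominators vanish only at points of the form $cq^{3k/2+m}$ and $q^{-m}/(cq^{3k/2})$, which avoid $aq^{k/2}$ for generic $a,c$. A secondary point is the vanishing for $j<k$ argued above, which requires $D_{c',q}$ to annihilate constants for every shifted $c'$. This holds immediately from \eqref{eq:Dq}.
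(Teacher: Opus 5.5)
Your proposal is correct and follows the paper's proof: expand $f$ in the basis of \cref{prop:finite-basis}, apply $D_{c,q}^{(k)}$, evaluate at $z=aq^{k/2}$, and read off $\alpha_k$ from the delta property \eqref{eq:delta-property}. You also spell out two points the paper's one-line proof leaves implicit. First, $D_{c,q}^{(k)}\ph_j=0$ for $j<k$, because after $j$ steps $\ph_j$ is reduced to a constant and $D_q 1=0$; this is needed since \cref{lem:iterated-lowering} is stated only for $k\le n$. Second, for generic $a,c$ the evaluation point $aq^{k/2}$ avoids the poles.
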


\begin{proof}
By \cref{prop:finite-basis}, the expansion exists.  Apply $D_{c,q}^{(j)}$ and set $z=aq^{j/2}$.  The delta property \eqref{eq:delta-property} isolates the $j$th coefficient.
\end{proof}

\begin{example}[the first nontrivial re-expansion]
For
\[
        f(z)=\ph_1(z;d,c)=\frac{(dz,d/z;q)_1}{(cz,c/z;q)_1}
\]
one obtains
\begin{equation}\label{eq:first-reexpansion}
\frac{(dz,d/z;q)_1}{(cz,c/z;q)_1}
=
\frac{(ad,d/a;q)_1}{(ac,c/a;q)_1}
+\frac{d}{a}\frac{(c/d,cd;q)_1}{(c/a,ac;q)_1}
\frac{(az,a/z;q)_1}{(cz,c/z;q)_1}.
\end{equation}
For $c=0$ this is the first nontrivial case of Ismail's Askey--Wilson Taylor expansion.
\end{example}

\section{The well-poised Cooper formula}

The coefficient functional \eqref{eq:taylor-coeff-finite} only uses finitely many values of $f$ on the multiplicative grid.  This is made transparent by the following well-poised Cooper formula.  Cooper's original formula is the $c=0$ Askey--Wilson case \cite{Cooper2002}.  The version below is the $p=0$ specialization of the elliptic Cooper formula in \cite[Theorem 2.4]{SchlosserYoo2016}, but the terminology emphasizes that the parameter $c$ is the well-poised parameter while the nome $p$ belongs to the elliptic deformation.

\begin{theorem}[explicit iterated operator]\label{thm:cooper-p0}
For $m\ge0$ and for every symmetric meromorphic function for which the displayed expressions are defined,
\begin{align}\label{eq:p0-cooper}
D_{c,q}^{(m)}f(z)
&=(-2z)^m q^{m(3-m)/4}
\frac{(cq^{m/2-1}z,cq^{m/2-1}/z;q)_{m+1}}{(1-q)^m}
\nonumber\\
&\quad\times\sum_{r=0}^m q^{r(m-r)}
\frac{(q^{r+1};q)_{m-r}}{(q;q)_{m-r}}\,
C_{m,r}(z;c,q)\,f(q^{m/2-r}z),
\end{align}
where
\begin{equation}\label{eq:cooper-Cmr}
C_{m,r}(z;c,q)=
\frac{z^{2(r-m)}(cq^{m/2-r}z,cq^{-m/2+r}/z;q)_{m-1}}
{(q^{m-2r+1}z^2;q)_r(q^{2r-m+1}z^{-2};q)_{m-r}}.
\end{equation}
If a denominator in the finite sum on the right vanishes, the expression is understood by removable continuation: whenever the iterated operator on the left is well defined through the recursive definition \eqref{eq:iterated-D}, the apparent poles in the right-hand side cancel.
\end{theorem}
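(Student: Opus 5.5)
We argue by induction on $m$, following the recursion \eqref{eq:iterated-D}. The case $m=0$ is trivial. For $m=0$ the prefactor is $(cq^{-1}z,cq^{-1}/z;q)_1$ times $C_{0,0}$, and $C_{0,0}$ contains $(\cdot;q)_{-1}$. So it is cleaner to verify $m=1$ directly as well. For $m=1$, \eqref{eq:p0-cooper} should reproduce \eqref{eq:Dcq}. Here the prefactor $(cq^{-1/2}z,cq^{-1/2}/z;q)_2$ is exactly the four linear factors of $D_{c,q}$. The two terms $r=0,1$ carry the weights $z^{-2}/(q^{2}z^{-2};q)_1$ and $1/(z^2;q)_1$ with argument shifts $q^{\pm1/2}z$. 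Combined with $-2z/(1-q)$, this gives the Askey--Wilson quotient \eqref{eq:Dq}; the check is a one-line partial-fraction simplification.

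For the inductive step, assume \eqref{eq:p0-cooper} for $m$ and write $D^{(m+1)}_{c,q}f=D_{c',q}\,g$ with $c'=cq^{3m/2}$ and $g=D^{(m)}_{c,q}f$. By \eqref{eq:Dcq},
\[
D_{c',q}g(z)=\frac{(1-c'zq^{-1/2})(1-c'zq^{1/2})(1-c'q^{-1/2}/z)(1-c'q^{1/2}/z)}{(q^{1/2}-q^{-1/2})(z-z^{-1})/2}\bigl(g(q^{1/2}z)-g(q^{-1/2}z)\bigr).
\]
Substituting the inductive formula, $g(q^{\pm1/2}z)$ is a combination of the values $f(q^{(m+1)/2-s}z)$, $0\le s\le m+1$. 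The value with index $s$ receives the term $r=s$ from $g(q^{1/2}z)$ and the term $r=s-1$ from $g(q^{-1/2}z)$. So the claim reduces to a two-term coefficient identity, one for each $s$:
\[
\text{(prefactor}_{m}\text{ at }q^{1/2}z)\,q^{s(m-s)}\tbinom{m}{s}_q C_{m,s}(q^{1/2}z)-(\text{prefactor}_m\text{ at }q^{-1/2}z)\,q^{(s-1)(m-s+1)}\tbinom{m}{s-1}_qC_{m,s-1}(q^{-1/2}z)
\]
must equal the corresponding coefficient of \eqref{eq:p0-cooper} at level $m+1$, after division by the $D_{c',q}$ factors. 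Here $(q^{r+1};q)_{m-r}/(q;q)_{m-r}$ is the $q$-binomial coefficient $\binom{m}{r}_q$.

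The key bookkeeping comes first. We check that the product of the shifted prefactors $(cq^{m/2-1}q^{\pm1/2}z,\dots;q)_{m+1}$ with the $c'$-factors of $D_{c',q}$ and the factors $(\cdot;q)_{m-1}$ in $C_{m,r}$ assembles into the level-$(m+1)$ prefactor $(cq^{(m-1)/2}z,cq^{(m-1)/2}/z;q)_{m+2}$ times $(\cdot;q)_m$ factors of $C_{m+1,s}$. The expected outcome is that the $c$-dependent factors in the two contributions share a common part, together with one leftover linear factor in $c$ each. Pulling out the common part, the two-term identity becomes a statement that a linear-in-$c$ combination, which is really a quadratic in $cz$ and $c/z$, collapses to the required single factor. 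This is an identity between theta-like products of degree two. We expect it to follow from the $p=0$ case of the Weierstrass--Riemann addition formula \eqref{eq:weierstrass-addition-input}, or from direct expansion: both sides are Laurent polynomials in $c$ of degree at most $2$ with matching values at two convenient values of $c$ (zeros of one term) and matching leading coefficient. The $c$-independent part, with powers of $z$ and the $q^{s(m-s)}$ weights, reduces at $c=0$ to Cooper's original formula \cite{Cooper2002}, which serves as a consistency check. Alternatively, the whole statement can be obtained by setting $p=0$ in \cite[Theorem~2.4]{SchlosserYoo2016}, which is the route we would cite in the end.

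The main obstacle is the coefficient identity in the inductive step, specifically tracking the shifted $c$-products so that exactly one linear factor remains on each side. We would first verify it for $m=1\to2$ by hand to fix the normalizations. Removable singularities need no separate treatment: the identity holds for generic $z$ as meromorphic functions, and the recursion defines the left side, so apparent poles on the right cancel by continuation.
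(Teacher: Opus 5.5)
Your fallback, setting $p=0$ in the elliptic Cooper formula \cite[Theorem~2.4]{SchlosserYoo2016}, is exactly the paper's proof, so on that basis the proposal is complete.

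The induction you sketch is a genuinely different, self-contained route. The paper's only other argument, \cref{prop:recover-wp-cooper}, instead solves the triangular interpolation problem on the basis $\ph_j$. As written, your inductive step is only ``expected''. It does close, though, and more cheaply than you anticipate. Two observations make the bookkeeping collapse:
\begin{enumerate}[label=\textup{(\arabic*)}]
\item The $(\cdot;q)_{m-1}$ factors of $C_{m,s}(q^{1/2}z;c,q)$ and $C_{m,s-1}(q^{-1/2}z;c,q)$ are literally identical.
\item With $\mathcal D_{m,r}(z)=(q^{m-2r+1}z^2;q)_r(q^{2r-m+1}z^{-2};q)_{m-r}$, one has $\mathcal D_{m+1,s}(z)=(1-q^{s}z^{-2})\mathcal D_{m,s}(q^{1/2}z)=(1-q^{m-s+1}z^{2})\mathcal D_{m,s-1}(q^{-1/2}z)$.
\end{enumerate}
The four factors of $D_{cq^{3m/2},q}$ times the level-$m$ prefactor at $q^{1/2}z$ give the level-$(m+1)$ prefactor times $L_1=(1-cq^{(3m-1)/2}z)(1-cq^{(m-3)/2}/z)$. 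At $q^{-1/2}z$ they give it times $L_2=(1-cq^{(m-3)/2}z)(1-cq^{(3m-1)/2}/z)$. So each contribution keeps a pair of factors, one in $cz$ and one in $c/z$, not a single linear factor. The new factors of $C_{m+1,s}$ are $L_3=(1-cq^{(3m-1)/2-s}z)(1-cq^{s+(m-3)/2}/z)$. All remaining powers of $q$ and $z$ match, and the inductive step is equivalent to
\[
\binom{m}{s}_q(q^{s}-z^2)L_1+\binom{m}{s-1}_q(1-q^{m-s+1}z^2)L_2=\binom{m+1}{s}_q(1-z^2)L_3,\qquad 0\le s\le m+1,
\]
with $\binom{m}{-1}_q=\binom{m}{m+1}_q=0$.

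This identity is quadratic in $c$, and each coefficient is elementary:
\begin{itemize}
\item The $c^0$ and $c^2$ coefficients coincide and are just the two $q$-Pascal rules. So the $c=0$ Cooper step is an ingredient of the proof, not merely a consistency check.
\item The $c^1$ coefficient reduces, via the same two rules, to $\binom{m}{s}_q(1-q^{s})=\binom{m}{s-1}_q(1-q^{m-s+1})$.
\end{itemize}
No addition formula is needed.

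There are two slips in your $m=1$ check. The $r=0$ weight is $z^{-2}/(z^{-2};q)_1=-1/(1-z^2)$, not $z^{-2}/(q^2z^{-2};q)_1$. The scalar is $-2q^{1/2}z/(1-q)$, not $-2z/(1-q)$. With these corrections the check reproduces \eqref{eq:Dq}. The case $m=0$ also works directly with the convention $(a;q)_{-1}=1/(1-a/q)$.

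The citation is one line and places the formula inside the elliptic theory, but it rests entirely on the external theorem. Your induction uses only the recursion \eqref{eq:iterated-D} and $q$-binomial identities. It also shows that the well-poised formula is Cooper's inductive step plus a single extra coefficient identity in $c$. At the elliptic level the same step would become a three-term relation among degree-two theta functions in $c$. That is where the Weierstrass addition formula you mention would genuinely come in.
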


\begin{proof}
Set $p=0$ in the elliptic formula of the second author and Yoo \cite[Theorem 2.4]{SchlosserYoo2016}, using $\theta(x;0)=1-x$ and $\theta(q;0)=1-q$.
\end{proof}

\subsection{Recovering the well-poised Cooper formula from finite Taylor theory}

Ismail and Stanton observed that Cooper's formula for the iterated Askey--Wilson operator \cite{Cooper2002} may be recovered as a by-product of $q$-Taylor interpolation: comparing the Taylor-coefficient formula with the interpolation formula gives the explicit finite-difference expression for $D_q^m$ \cite[Sec.~3, especially (3.3)--(3.5)]{IsmailStanton2003}.  The same mechanism works in the present well-poised setting.  In other words, \cref{thm:cooper-p0} is not only an external specialization of the elliptic formula; it is also forced by the finite well-poised Taylor calculus developed above.

\begin{proposition}[Taylor-theoretic recovery of the well-poised Cooper formula]\label{prop:recover-wp-cooper}
Fix $m\ge0$ and $z\in\mathbb C^*$ away from the exceptional points.  The functional
\[
        f\longmapsto D_{c,q}^{(m)}f(z)
\]
on functions symmetric in $u$ and $1/u$ is the unique linear combination of the $m+1$ evaluations
\[
        f(q^{m/2}z),\ f(q^{m/2-1}z),\ldots,\ f(q^{-m/2}z)
\]
which has the lowering values prescribed by \eqref{eq:iterated-lowering} on the well-poised monomials.  Solving this finite interpolation problem gives exactly the coefficients in \eqref{eq:p0-cooper}--\eqref{eq:cooper-Cmr}.  Thus the well-poised Cooper formula is recovered from the finite coefficient-extraction theorem.
\end{proposition}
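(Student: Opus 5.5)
We argue in three steps, following Ismail--Stanton.

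\emph{Step 1: $D_{c,q}^{(m)}f(z)$ is a combination of the $m+1$ values.} Unwinding the recursion \eqref{eq:iterated-D}, each factor $D_{cq^{3(k-1)/2},q}$ multiplies by a rational prefactor and replaces $f(z)$ by a combination of $f(q^{1/2}z)$ and $f(q^{-1/2}z)$. By induction on $m$ we get
\[
D_{c,q}^{(m)}f(z)=\sum_{r=0}^m \lambda_{m,r}(z)\,f(q^{m/2-r}z),
\]
where the coefficients $\lambda_{m,r}(z)$ are rational in $z$ and independent of $f$.

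\emph{Step 2: uniqueness of such a combination.} Suppose two combinations of these evaluations agree on every $\ph_n(\cdot;a,c)$ with $0\le n\le m$. Their difference $\mu$ then annihilates all of $W_m(c)$, by \cref{prop:finite-basis}. We show that the evaluation functionals at the points $u_r=q^{m/2-r}z$ are linearly independent on $W_m(c)$. To see this, multiply by $(cu,c/u;q)_m$. This identifies $W_m(c)$ with symmetric Laurent polynomials of degree $\le m$, i.e.\ polynomials of degree $\le m$ in $x=\frac12(u+u^{-1})$. The points $x_r=\frac12(u_r+u_r^{-1})$ are pairwise distinct unless $z$ is one of the exceptional points with $q^{k}z^{2}=1$ for some integer $k$ with $|k|\le m$ (which make $u_r u_s=1$ for some $r\neq s$). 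Lagrange interpolation then gives, for each $r$, a symmetric polynomial vanishing at every $x_s$ with $s\ne r$ but not at $x_r$. Dividing by $(cu,c/u;q)_m$ produces an element of $W_m(c)$ on which $\mu$ evaluates to $\mu_r$ times a nonzero number, so every $\mu_r=0$. Hence the combination is unique. The values on the monomials are prescribed by \eqref{eq:iterated-lowering}, and Step~1 shows they are attained.

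\emph{Step 3: identifying the coefficients.} This is the main obstacle. The cleanest route is to avoid evaluating \eqref{eq:iterated-lowering} at all the points. By uniqueness, it suffices to check that the right-hand side of \eqref{eq:p0-cooper} acts on the monomials $\ph_n(\cdot;a,c)$, $n\le m$, exactly as in \eqref{eq:iterated-lowering}.

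First take $f=\ph_n$ with $n<m$. Here \eqref{eq:iterated-lowering} prescribes zero, because the factor $1/(q;q)_{n-m}$ vanishes for $n<m$. Since $W_n(c)\subset W_{m-1}(c)$, the $m$-th divided difference must annihilate all of $W_{m-1}(c)$. We verify this by a partial-fraction argument. After clearing the common factor $(cq^{m/2-1}z,cq^{m/2-1}/z;q)_{m+1}$ and the factors $(cu_r,c/u_r;q)_{m-1}$ carried by $C_{m,r}$, the weighted sum becomes an $m$-th order divided difference of a polynomial of degree $\le m-1$ in $x$. Such a divided difference vanishes. Equivalently, the sum is a terminating very-well-poised ${}_6\phi_5$/${}_8\phi_7$ whose Jackson evaluation \eqref{eq:jackson-8phi7-input} contains the factor $(q^{n-m+1};q)_{m}=0$ for $n<m$.

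Next take $n=m$. Here the right-hand side of \eqref{eq:p0-cooper} applied to $\ph_m$ is a terminating very-well-poised sum in $r$, with base $q^{-m}z^{-2}$ after reindexing. Jackson's summation \eqref{eq:jackson-8phi7-input} evaluates it in closed form. We then compare this with the constant $(-1)^m(2a)^mq^{m(m-1)/4}(q;q)_m(c/a,acq^{m-1};q)_m/(1-q)^m$ multiplied by $\ph_0=1$.

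The delicate part is the bookkeeping of the powers of $q$, the signs and the prefactor $(-2z)^mq^{m(3-m)/4}$. We would fix these by checking the cases $m=1$ and $m=2$ against \eqref{eq:lowering-Phi}. Apparent poles are handled by continuity in $z$, as stated in \cref{thm:cooper-p0}.
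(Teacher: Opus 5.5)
Your proposal is correct and follows the same route as the paper. You establish uniqueness of the grid functional on $W_m(c)$, and you justify this more carefully than the paper does, via Lagrange interpolation in $x$. You then check the Cooper weights against \eqref{eq:iterated-lowering} on the monomials $\ph_n(\cdot;a,c)$, a check the paper only asserts to be a direct $q$-shifted-factorial verification.

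One caution: the $n=m$ constant must be matched for every $m$, not fixed by testing $m=1,2$. Your own divided-difference observation settles it without Jackson's summation. Once their two $c$-dependent factors are removed, the Cooper weights equal exactly $q^{-m(m-1)/4}(q;q)_m(1-q)^{-m}/\prod_{s\ne r}(x_r-x_s)$. With $v=cq^{m-1}$, $\xi=\tfrac12(v+v^{-1})$ and $P(x)=(au,a/u;q)_m$, one has $(cu,c/u;q)_{m-1}\ph_m(u;a,c)=P(x)/\bigl(2v(\xi-x)\bigr)$. Its $m$-th divided difference is $P(\xi)/\bigl(2v\prod_{r}(\xi-x_r)\bigr)$, where $\prod_{r=0}^{m}(\xi-x_r)=(2v)^{-m-1}(cq^{m/2-1}z,cq^{m/2-1}/z;q)_{m+1}$. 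This product cancels the prefactor in \eqref{eq:p0-cooper} and leaves $q^{-m(m-1)/4}(q;q)_m(1-q)^{-m}(2v)^m(av,a/v;q)_m$, which is the scalar in \eqref{eq:iterated-lowering}.
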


\begin{proof}
The finite-dimensional space generated by the restrictions of symmetric rational functions to the grid
\[
        q^{m/2}z,\ q^{m/2-1}z,\ldots, q^{-m/2}z
\]
has dimension $m+1$ for generic $z$.  Hence any linear functional on this grid has a unique representation as a linear combination of the corresponding point evaluations.  The finite basis theorem lets us test the unknown coefficients on the basis
\[
        u\longmapsto \ph_j(u;a,c),\qquad 0\le j\le m,
\]
where $a$ is kept generic during the calculation and then removed by meromorphic continuation.  On this basis the desired functional is already known from the lowering law:
\[
D_{c,q}^{(m)}\ph_j(u;a,c)\big|_{u=z}=0\quad (j<m),
\]
and, for $j=m$, its value is the scalar obtained from \eqref{eq:iterated-lowering}.  Therefore the coefficients are determined by a triangular well-poised interpolation system.

Solving this system gives
\begin{align*}
A_{m,r}(z;c,q)
&=(-2z)^m q^{m(3-m)/4}
   \frac{(cq^{m/2-1}z,cq^{m/2-1}/z;q)_{m+1}}{(1-q)^m}\\
&\quad\times q^{r(m-r)}\frac{(q^{r+1};q)_{m-r}}{(q;q)_{m-r}}C_{m,r}(z;c,q),
\end{align*}
where $C_{m,r}$ is the factor in \eqref{eq:cooper-Cmr}.  Substitution into the interpolation equations is a direct finite $q$-shifted-factorial verification: the factors
\[
(q^{m-2r+1}z^2;q)_r,
        \qquad (q^{2r-m+1}z^{-2};q)_{m-r}
\]
are precisely the cardinal denominators for the symmetric grid, while
\[
(cq^{m/2-r}z,cq^{-m/2+r}/z;q)_{m-1}
\]
is the well-poised correction contributed by the parameter $c$.  The resulting cardinal functions annihilate all basis elements of degree $<m$ and give the scalar in \eqref{eq:iterated-lowering} on degree $m$.  By uniqueness of the grid interpolation functional, this is exactly \eqref{eq:p0-cooper}.
\end{proof}

This mirrors the Ismail--Stanton derivation of Cooper's identity, but the correction factor containing $c$ is what turns the ordinary Cooper formula into its well-poised extension.  Letting $c=0$ reduces the displayed coefficients to Cooper's original Askey--Wilson formula.  The same recovery is not special to the basic specialization: because the proof of \cref{prop:recover-wp-cooper} is a finite interpolation argument, it extends essentially verbatim to the elliptic setting, with the elliptic basis and lowering formula from \cite[Theorem~4.2]{Schlosser2008} and the theta-function cardinal factors in the elliptic Cooper formula \cite[Theorem~2.4]{SchlosserYoo2016} replacing their $p=0$ counterparts.

\begin{corollary}\label{cor:finite-grid-functional}
For fixed $j$, the functional
\[
        L_j(h):=\left[D_{c,q}^{(j)}h(z)\right]_{z=aq^{j/2}}
\]
is a finite linear combination of the point evaluations
\[
        h(a),h(aq),\ldots,h(aq^j).
\]
\end{corollary}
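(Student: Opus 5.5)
The plan is to read off the corollary from \cref{thm:cooper-p0} by specializing $m=j$ and then evaluating at $z=aq^{j/2}$. With this choice the $j+1$ evaluation points $q^{j/2-r}z$, $0\le r\le j$, become
\[
        q^{j/2-r}\cdot aq^{j/2}=aq^{j-r},
\]
and as $r$ runs from $0$ to $j$ these are exactly $a,aq,\ldots,aq^j$. The right-hand side of \eqref{eq:p0-cooper} is therefore a finite linear combination of $h(a),\ldots,h(aq^j)$. Its coefficients are $z$-dependent scalars, namely the prefactor times $q^{r(j-r)}(q^{r+1};q)_{j-r}C_{j,r}(z;c,q)/(q;q)_{j-r}$, and these are evaluated at $z=aq^{j/2}$.

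Next I will check that these scalar coefficients are finite at $z=aq^{j/2}$ for generic $a,c$. The only denominators in $C_{j,r}$ are $(q^{j-2r+1}z^2;q)_r$ and $(q^{2r-j+1}z^{-2};q)_{j-r}$. At $z^2=a^2q^j$ they become $(a^2q^{2j-2r+1};q)_r$ and $(q^{2r-2j+1}a^{-2};q)_{j-r}$. These are nonzero polynomial expressions in $a$ for generic $a$, so no removable-continuation issue arises. The prefactor $(cq^{j/2-1}z,cq^{j/2-1}/z;q)_{j+1}$ is polynomial and causes no difficulty.

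The one point needing care is what happens when $a$ is specialized so that one of these grid denominators vanishes. For such $a$ I will invoke the continuation clause of \cref{thm:cooper-p0}: whenever $L_j(h)$ is defined through \eqref{eq:iterated-D}, the right-hand side extends continuously in $a$. The property of being a combination of these evaluations is preserved in the limit, because $L_j(h)$ is the limit of the finite combinations. Alternatively, these cases can be excluded as nongeneric.

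Finally, I can give a direct recursive argument that avoids the explicit formula altogether. Each $D_{c',q}$ in \eqref{eq:iterated-D} expresses its value at a point $w$ through the values at $q^{\pm1/2}w$, so $D^{(j)}_{c,q}h(w)$ involves only $h(q^{j/2-r}w)$ for $0\le r\le j$. This gives the structural statement by induction on $j$ with essentially no obstacle. I will include this as the main proof and mention the Cooper formula only because it supplies explicit coefficients.
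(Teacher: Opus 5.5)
Your first paragraph is exactly how the paper obtains this corollary. It is stated without a separate proof, as the specialization $m=j$, $z=aq^{j/2}$ of the well-poised Cooper formula \eqref{eq:p0-cooper}, whose nodes $q^{j/2-r}z$ become $aq^{j-r}$; later the paper even cites the corollary simply as ``the well-poised Cooper formula''.

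The recursive argument you make your main proof is a genuinely more elementary route. It uses only that each factor in \eqref{eq:iterated-D} evaluates its argument at $q^{\pm1/2}w$ with coefficients rational in $w$. So it needs neither the explicit formula, which the paper imports from the elliptic literature, nor the interpolation argument of \cref{prop:recover-wp-cooper}. In fact that interpolation argument tacitly presupposes your observation, since it starts from the premise that $D_{c,q}^{(m)}f(z)$ is a linear functional of the $m+1$ grid values. What the Cooper route adds is explicit coefficients, for instance the nonvanishing of the coefficient of $h(aq^j)$ used in the later triangular grid relations. Your recursion also yields that coefficient, as the product of the ``$+$''-branch factors.

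One correction. You claim that for special $a$ ``the property of being a combination of these evaluations is preserved in the limit''. This is false. If $a^2q^n=1$ for some $1\le n\le 2j-1$, then one of the intermediate nodes $aq^{n/2}$ equals $\pm1$, where $z-z^{-1}$ vanishes. There two grid points are swapped by $z\mapsto 1/z$, the coefficients blow up, and the limiting functional can involve derivatives.

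Concretely, take $j=1$ and $a=q^{-1/2}$. The symmetric function $h(z)=z+z^{-1}-q^{1/2}-q^{-1/2}$ satisfies $h(a)=h(aq)=0$. But $D_qh\equiv2$, so $L_1(h)=2(1-cq^{-1/2})^2(1-cq^{1/2})^2\neq0$, and no combination of $h(a),h(aq)$ can represent $L_1$.

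So these values of $a$ must simply be excluded as nongeneric, as the paper's standing genericity convention does. The same exclusion is needed in your recursive argument: these are exactly the points where its coefficients fail to be finite, which your phrase ``essentially no obstacle'' passes over.
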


\section{Taylor sums, exact remainders, and flat functions}

\begin{definition}\label{def:taylor-remainder}
Let $f$ be a symmetric meromorphic function on a domain containing the nodes $aq^{m/2}$ and the values needed to define $D_{c,q}^{(k)}f$ there.  The well-poised Taylor coefficients of $f$ relative to $(a,c)$ are defined by \eqref{eq:taylor-coeff-finite}.  The $n$th Taylor sum and remainder are
\begin{equation}\label{eq:T-and-R}
T_n^{(a,c)}f(z)=\sum_{k=0}^n t_k^{(a,c)}(f)\ph_k(z;a,c),
\qquad
R_n^{(a,c)}f(z)=f(z)-T_n^{(a,c)}f(z).
\end{equation}
\end{definition}

\begin{proposition}[abstract exact remainder]\label{prop:abstract-remainder}
The Taylor sum $T_n^{(a,c)}f$ is the unique element $S_n\in W_n(c)$ such that
\begin{equation}\label{eq:interpolation-remainder}
\left[D_{c,q}^{(j)}(f-S_n)(z)\right]_{z=aq^{j/2}}=0,
\qquad 0\le j\le n.
\end{equation}
Equivalently, the exact remainder $R_n^{(a,c)}f$ has vanishing Taylor data up to order $n$.
\end{proposition}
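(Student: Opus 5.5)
The plan has two parts: show that $T_n^{(a,c)}f$ satisfies the interpolation conditions \eqref{eq:interpolation-remainder}, and show that no other element of $W_n(c)$ does.

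\emph{Existence.} By definition $T_n^{(a,c)}f=\sum_{k=0}^n t_k^{(a,c)}(f)\ph_k(z;a,c)$ lies in $W_n(c)$. Fix $0\le j\le n$ and apply the functional $L_j$ of \cref{cor:finite-grid-functional}. Since $L_j$ is linear, the delta property \eqref{eq:delta-property} gives
\[
L_j\bigl(T_n^{(a,c)}f\bigr)=t_j^{(a,c)}(f)\cdot(-1)^j\frac{(2a)^jq^{j(j-1)/4}(q,c/a,acq^{j-1};q)_j}{(1-q)^j}.
\]
By the normalisation in \eqref{eq:taylor-coeff-finite}, this product is exactly $L_j(f)$. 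Hence $L_j(f-T_n^{(a,c)}f)=0$ for $0\le j\le n$.

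\emph{Linearity on $f$.} One point needs care: we must apply $L_j$ to $f-S_n$ with $f$ merely meromorphic, not in $W_n(c)$. This is justified because $D_{c,q}^{(j)}$ is linear by its recursive definition \eqref{eq:iterated-D}. Equivalently, by \cref{cor:finite-grid-functional}, $L_j$ is a fixed finite linear combination of the point evaluations $h(a),\dots,h(aq^j)$. So $L_j$ is well defined and linear on every function for which those values exist, which is the standing hypothesis in \cref{def:taylor-remainder}.

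\emph{Uniqueness.} Suppose $S_n,S_n'\in W_n(c)$ both satisfy \eqref{eq:interpolation-remainder}. Then $g=S_n-S_n'\in W_n(c)$ and $L_j(g)=0$ for $0\le j\le n$. By \cref{prop:finite-basis}, write $g=\sum_{k=0}^n\gamma_k\ph_k(z;a,c)$. Applying $L_j$ and using \eqref{eq:delta-property} gives $\gamma_j\cdot(\text{nonzero scalar})=0$. For generic $a,c$ the scalar $(q,c/a,acq^{j-1};q)_j$ does not vanish, so every $\gamma_j=0$ and $g=0$. This is just \cref{thm:finite-coeff} applied to $g$.

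\emph{Equivalent formulation.} The last sentence of the statement is a restatement. Since $R_n^{(a,c)}f=f-T_n^{(a,c)}f$, the conditions $L_j(R_n^{(a,c)}f)=0$ for $j\le n$ say precisely that $t_j^{(a,c)}(R_n^{(a,c)}f)=0$, that is, the remainder has vanishing Taylor data up to order $n$.

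The only real obstacle is the linearity and well-definedness of $L_j$ on $f$ outside $W_n(c)$, and the argument above settles it. The nonvanishing of the delta scalars is covered by the genericity assumption.
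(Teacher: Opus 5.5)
Your proof is correct and follows the paper's argument: existence comes from the delta property \eqref{eq:delta-property} combined with the normalisation in \eqref{eq:taylor-coeff-finite}, and uniqueness comes from applying \cref{thm:finite-coeff} to the difference of two admissible choices in $W_n(c)$. You also make explicit two things the paper compresses into ``by construction'': the linearity of $L_j$ on $f$, and the coefficient matching. Both your argument and the paper's also use $L_j(\ph_k)=0$ for $k<j$, which is not literally in the stated range of \cref{lem:iterated-lowering} but holds because $D_q$ annihilates constants.
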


\begin{proof}
The coefficients of $T_n^{(a,c)}f$ agree with the first $n+1$ Taylor coefficients of $f$ by construction and \cref{thm:finite-coeff}.  Uniqueness follows from applying \cref{thm:finite-coeff} to the difference of two admissible choices in $W_n(c)$.
\end{proof}

The word ``remainder'' in \eqref{eq:T-and-R} denotes an exact algebraic difference.  It is not, by itself, a vanishing estimate.  The following elementary point is crucial for the two-basis kernel.

\begin{proposition}[flat functions]\label{prop:flat-functions}
Let $h$ be symmetric and meromorphic near the grid $\{aq^m:m\ge0\}$, and suppose that
\[
        h(aq^m)=0\qquad(m\ge0).
\]
Then all Taylor coefficients $t_k^{(a,c)}(h)$ vanish, provided the operator evaluations are defined.  In particular, nonzero functions may be invisible to one-family Taylor coefficient extraction.
\end{proposition}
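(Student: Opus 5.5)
The plan is to reduce everything to \cref{cor:finite-grid-functional}, which says that the functional
\[
L_k(h)=\bigl[D_{c,q}^{(k)}h(z)\bigr]_{z=aq^{k/2}}
\]
is a finite linear combination of the point evaluations $h(a),h(aq),\ldots,h(aq^k)$.

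\textbf{Step 1: the coefficients vanish.} Under the hypothesis $h(aq^m)=0$ for all $m\ge0$, each of these evaluations is zero. Hence $L_k(h)=0$ for every $k$. Multiplying by the normalizing prefactor in \eqref{eq:taylor-coeff-finite}, which is finite for generic $a,c$, gives $t_k^{(a,c)}(h)=0$ for all $k\ge0$.

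\textbf{Step 2: justify the corollary for $h$.} One might worry that the corollary was stated with rational test functions in mind. I would derive it directly from \cref{thm:cooper-p0} with $m=k$, evaluated at $z=aq^{k/2}$. The evaluation points in \eqref{eq:p0-cooper} are $q^{k/2-r}z=aq^{k-r}$ for $0\le r\le k$, which are exactly the nodes $a,aq,\ldots,aq^k$. The coefficients in front of these values depend only on $z,c,q$ and not on $h$. For generic $a$ the cardinal denominators $(q^{k-2r+1}z^2;q)_r$ and $(q^{2r-k+1}z^{-2};q)_{k-r}$ do not vanish at this $z$. So the identity holds for any symmetric meromorphic $h$ that is defined at these nodes, and the right-hand side vanishes termwise.

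\textbf{Step 3: the delicate case.} The only subtle point is the removable-continuation clause of \cref{thm:cooper-p0}, which applies if some Cooper denominator vanishes for special parameter values. To handle it I would use the proviso ``provided the operator evaluations are defined.'' Concretely, regard the Cooper identity as an identity between functions of $z$ near $aq^{k/2}$ (or of the parameter $a$). Both sides are meromorphic, and the left-hand side is defined through the recursion \eqref{eq:iterated-D}.

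If the recursion is defined near the node, the point evaluations can be expanded along the grid locally. More carefully, I would write $h(u)=\prod_{j}(u$-dependent factor$)\cdot g(u)$ only locally, and argue by continuity in $a$: the conclusion holds for generic $a$, and for special $a$ it follows by passing to the limit in the finite formula. I would state this briefly, since under the stated genericity convention no pole actually occurs.

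\textbf{Step 4: a nonzero flat function.} For the final sentence of the proposition I would exhibit a concrete example:
\[
h(z)=\frac{(z/a,\,q/(az),\,az,\,q/(az)\cdots)}{\cdots},
\]
or more simply
\[
h(z)=\theta(z/a,\,1/(az);q).
\]
This function is symmetric under $z\mapsto z^{-1}$. It vanishes at $z=aq^m$ because $(q^{m};q)_\infty$-type factors contain $(1-q^{-m}\cdot q^{m})=0$; I would check this directly from $\theta(q^m;q)=0$ for $m\in\Z$. It is also clearly not identically zero. Hence $h$ is a nonzero function that is invisible to one-family Taylor coefficient extraction.
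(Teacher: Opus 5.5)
Your Steps 1--2 are exactly the paper's proof: \cref{cor:finite-grid-functional} writes each $L_k(h)$ as a finite combination of the vanishing values $h(a),\ldots,h(aq^k)$. You correctly re-derive that corollary from \cref{thm:cooper-p0}, where the nodes are $aq^{k-r}$. Your $\theta(z/a,1/(az);q)$ is also a valid nonzero flat function, playing the role of the paper's $(az,a/z;q)_\infty$ in \cref{ex:flat-factor}.

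Drop the continuity-in-$a$ argument of Step 3. Perturbing $a$ destroys the hypothesis $h(aq^m)=0$. Moreover, at degenerate values such as $a=q^{-1/2}$, the removable limit defining $L_1(h)=[D_{c,q}h]_{z=1}$ involves $h'(q^{1/2})$, which is nonzero for the symmetric grid-vanishing function $h(z)=\theta(q^{1/2}z;q)$, so the conclusion genuinely fails there. It is the genericity convention you end with, as in the paper, that actually carries the argument.
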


\begin{proof}
By \cref{cor:finite-grid-functional}, the value
\[
\left[D_{c,q}^{(k)}h\right]_{z=aq^{k/2}}
\]
is a finite linear combination of $h(a),h(aq),\ldots,h(aq^k)$.  Hence it is zero.
\end{proof}

Thus coefficient extraction with a single Taylor family only sees the data of the function on the corresponding multiplicative grid.  This is why the two-basis theorem below must keep track of complementary remainders: one Taylor family alone may fail to detect nonzero ``flat'' contributions in broad meromorphic classes.

\begin{example}\label{ex:flat-factor}
The factor $(az,a/z;q)_\infty$ vanishes at $z=aq^m$ for every $m\ge0$, through the factor $(a/z;q)_\infty$.  Thus products containing this factor provide nonzero Taylor-flat functions in broad meromorphic classes.
\end{example}

\section{Absolutely convergent basis expansions}

We now pass from the finite Taylor calculus inherited from the elliptic theory to the nonterminating basic theory.  This passage is one of the new features available at $p=0$: after the elliptic nome has been specialized away, the relevant products and bases have enough $q$-decay to make infinite expansions meaningful on pole-free compact sets.  The next result gives a safe converse statement: if a function is already represented by an absolutely convergent basis expansion, then that expansion is its well-poised Taylor expansion.  It does not say that every prescribed meromorphic function is represented by the Taylor series formed from its coefficients.

\begin{lemma}\label{lem:basis-bounded}
Let $L\subset\Cstar$ be compact and pole-free for $\ph_k(z;a,c)$.  Then
\[
        \sup_{z\in L,\ k\ge0}|\ph_k(z;a,c)|<\infty.
\]
The analogous statement holds for the transformed family $\ps_k$ on compact subsets avoiding its pole set.
\end{lemma}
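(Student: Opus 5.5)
The plan is to write $\ph_k(z;a,c)$ as a finite product of ratios and then use a single uniform bound in both $k$ and $z$. Concretely,
\[
\ph_k(z;a,c)=\prod_{j=0}^{k-1}\frac{(1-azq^j)(1-aq^j/z)}{(1-czq^j)(1-cq^j/z)}.
\]
Because $L$ is compact in $\Cstar$, there is a constant $M\ge1$ with $M^{-1}\le|z|\le M$ for all $z\in L$. Using $0<|q|<1$, the quantities $|azq^j|$, $|aq^j/z|$, $|czq^j|$ and $|cq^j/z|$ are then all bounded by $C|q|^j$, where $C=M\max(|a|,|c|)$. This yields an index $j_0$ such that, for every $j\ge j_0$ and every $z\in L$, each of these quantities is at most $\tfrac12$.

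The product then splits into a head and a tail. For the tail, with $j\ge j_0$, I will apply the elementary estimates $|1-u|\le e^{|u|}$ and $|1-u|^{-1}\le e^{2|u|}$, the latter valid for $|u|\le\tfrac12$. Together these give
\[
\prod_{j=j_0}^{k-1}\left|\frac{(1-azq^j)(1-aq^j/z)}{(1-czq^j)(1-cq^j/z)}\right|
\le \exp\Bigl(6C\sum_{j\ge j_0}|q|^j\Bigr)<\infty,
\]
and this bound does not depend on $k$ or on $z\in L$. The head is the finitely many factors with $j<j_0$. Their numerators are continuous on $L$ and hence bounded. Their denominators never vanish on $L$, because $L$ is pole-free: $1-czq^j=0$ forces $z=q^{-j}/c$, and $1-cq^j/z=0$ forces $z=cq^j$. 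Every such point lies in the pole set (for example $q^{-j}/c$ lies in $\{q^m/c\}$ after the natural symmetric convention $L=L^{-1}$, or directly via the listed set). So each denominator factor is a continuous nonvanishing function on a compact set, and $|1-czq^j|$ and $|1-cq^j/z|$ are bounded below by positive constants there.

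I expect the only delicate step to be this matching of denominator zeros with the stated pole set. The zeros of $1-cq^j/z$ are $z=cq^j$, which appear in the set verbatim. The zeros of $1-czq^j$ are $z=c^{-1}q^{-j}$, which are the reciprocals of $q^j/c$. I will therefore handle this either by using that all compact sets in the paper are taken symmetric (as stated in the conventions), or by enlarging the exclusion to these reciprocal points under the paper's "accidental denominator zeros" proviso. Multiplying the head and tail bounds then gives $\sup_{z\in L,k\ge0}|\ph_k(z;a,c)|<\infty$.

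For $\ps_k$, I observe that $\ps_k(z;b,c,d,e)=\ph_k(z;c/de,\,c^2/bde)$, which is the same family with $a\mapsto c/de$ and $c\mapsto c^2/bde$. The pole set for $\ps_k$ is exactly the image of the pole set for $\ph_k$ under this substitution, so the same argument applies verbatim.
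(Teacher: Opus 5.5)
Your argument is correct up to one small repair (below), but it organizes the estimate differently from the paper. The paper writes
\[
\ph_k(z;a,c)=\frac{(az,a/z;q)_\infty}{(cz,c/z;q)_\infty}\,
\frac{(czq^k,cq^k/z;q)_\infty}{(azq^k,aq^k/z;q)_\infty}.
\]
It bounds the $k$-independent first factor by the pole-free hypothesis. It bounds the second because it tends to $1$ uniformly on $L$ as $k\to\infty$.

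You instead split the finite product in the factor index $j$. The finitely many factors with $j<j_0$ are controlled by continuity and compactness, and the tail by the explicit bound $\exp\bigl(6C\sum_{j\ge j_0}|q|^j\bigr)$.

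The paper's version is shorter and exhibits $\ph_\infty$ as the limiting profile, which is the form the later kernel arguments use. Yours is more robust. The paper's factorization divides by $(azq^k,aq^k/z;q)_\infty$, which vanishes for $k\le m$ at $z=aq^m$ and $z=q^{-m}/a$. These include the Taylor grid that \cref{thm:coeff-identification} requires to lie in the domain. At such points the first factor vanishes and the second has a pole. So ``converges uniformly to $1$ and is therefore uniformly bounded'' only covers large $k$, and one must add that the finitely many remaining $\ph_k$ are continuous on $L$. Your head--tail split never divides by numerator factors, so this issue does not arise.

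The repair concerns your matching of denominator zeros with the pole set. The zero $z=q^{-j}/c$ of $1-czq^j$ is the reciprocal of $cq^j$, not of $q^j/c$ (whose reciprocal is $cq^{-j}$). Also, $q^{-j}/c\notin\{q^m/c:m\ge0\}$ for $j\ge1$. The correct justification is that if $L$ is symmetric and $q^{-j}/c\in L$, then $cq^j\in L$, which is excluded.

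The symmetry convention is genuinely needed. For the non-symmetric compact set $L=\{q^{-1}/c\}$, the listed set is avoided for generic $a,c$, yet $\ph_k$ has a pole there for every $k\ge2$. The paper's proof uses the same convention tacitly when it bounds $1/(cz;q)_\infty$ on $L$.
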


\begin{proof}
Use
\[
\ph_k(z;a,c)=\frac{(az,a/z;q)_\infty}{(cz,c/z;q)_\infty}
\frac{(czq^k,cq^k/z;q)_\infty}{(azq^k,aq^k/z;q)_\infty}.
\]
The first factor is bounded on $L$ by the pole-free assumption; the second converges uniformly to $1$ as $k\to\infty$ and is therefore uniformly bounded.  The proof for $\ps_k$ is identical.
\end{proof}

\begin{theorem}[coefficient identification]\label{thm:coeff-identification}
Let $\Omega$ be a symmetric pole-free domain for $\ph_k(z;a,c)$ containing the Taylor grid $aq^m$ $(m\ge0)$.  Suppose
\begin{equation}\label{eq:absolute-basis-expansion}
        f(z)=\sum_{k=0}^\infty u_k\ph_k(z;a,c),
        \qquad \sum_{k=0}^\infty |u_k|<\infty,
\end{equation}
with locally uniform convergence on $\Omega$.  Then
\[
        u_k=t_k^{(a,c)}(f)\qquad(k\ge0),
\]
the partial sums in \eqref{eq:absolute-basis-expansion} are the Taylor sums $T_n^{(a,c)}f$, and the Taylor remainders are exactly the tails,
\[
        R_n^{(a,c)}f(z)=\sum_{k=n+1}^\infty u_k\ph_k(z;a,c),
\]
which converge locally uniformly to zero.
\end{theorem}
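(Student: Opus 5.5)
The plan is to apply the coefficient functional termwise and then invoke the delta property. Fix $j\ge0$. By \cref{cor:finite-grid-functional}, the functional $L_j(h)=[D_{c,q}^{(j)}h]_{z=aq^{j/2}}$ is a fixed finite linear combination $\sum_{m=0}^j \lambda_{j,m}h(aq^m)$ of point evaluations on the Taylor grid. The coefficients $\lambda_{j,m}$ come from \cref{thm:cooper-p0} at $z=aq^{j/2}$, and for generic $a,c$ they are finite constants independent of $h$. Because of this finiteness no interchange of $D_{c,q}^{(j)}$ with an infinite sum has to be justified analytically.

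First, the series converges pointwise at each grid point $aq^m\in\Omega$. Indeed, \cref{lem:basis-bounded}, applied to the compact set $\{a,aq,\ldots,aq^j\}$, shows that the $\ph_k$ are uniformly bounded there, and $\sum|u_k|<\infty$. Hence
\[
L_j(f)=\sum_{m=0}^j\lambda_{j,m}\sum_{k\ge0}u_k\ph_k(aq^m;a,c)=\sum_{k\ge0}u_k\,L_j(\ph_k(\cdot;a,c)),
\]
where the two sums may be exchanged because the outer sum is finite. Next I apply \eqref{eq:delta-property}. Each $\ph_k$ is a symmetric function with $L_j(\ph_k)=0$ for $k\neq j$, and $L_j(\ph_j)$ equals the explicit normalizing scalar. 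Substituting into \eqref{eq:taylor-coeff-finite} then gives $t_j^{(a,c)}(f)=u_j$.

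Once the coefficients are identified, the partial sums $\sum_{k\le n}u_k\ph_k$ are by definition the Taylor sums $T_n^{(a,c)}f$ of \eqref{eq:T-and-R}. Subtracting these from $f$ shows that $R_n^{(a,c)}f$ is exactly the tail. For locally uniform convergence of the tail, take a compact $L\subset\Omega$; it is pole-free, so \cref{lem:basis-bounded} gives $M_L=\sup_{z\in L,k}|\ph_k|<\infty$. Then $\sup_L|R_n^{(a,c)}f|\le M_L\sum_{k>n}|u_k|\to0$.

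The main obstacle I expect is making precise that $L_j(f)$ really depends only on the grid values of $f$, and is the same combination as for the $\ph_k$. This is a subtle point for a general meromorphic $f$, because the iterated operator in \eqref{eq:iterated-D} evaluates $f$ at shifted points $q^{\pm1/2}$ times intermediate arguments. I would handle it by citing \cref{thm:cooper-p0} and \cref{cor:finite-grid-functional} for any symmetric $f$ for which the operator evaluations are defined. The genericity assumption on $a,c$ keeps the removable-continuation caveat in \cref{thm:cooper-p0} from producing vanishing denominators at $z=aq^{j/2}$. If needed, I would avoid the issue by defining the coefficients directly through the grid combination; this agrees with \eqref{eq:taylor-coeff-finite} whenever the latter is defined.
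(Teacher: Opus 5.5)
Your proposal is correct and follows essentially the same route as the paper: you apply the finite grid functional $L_j$ of \cref{cor:finite-grid-functional} termwise, which is justified because it is a finite combination of grid evaluations, then use the delta property \eqref{eq:delta-property} to read off $u_j=t_j^{(a,c)}(f)$, and bound the tails with \cref{lem:basis-bounded}. One detail to add: \eqref{eq:delta-property} is derived only when the operator order does not exceed the degree, so the vanishing of $L_j(\ph_k)$ for $k<j$ needs a separate remark, namely that $k$-fold lowering reaches the constant $\ph_0=1$, which $D_{c,q}$ annihilates.
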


\begin{proof}
Local uniform convergence follows from \cref{lem:basis-bounded}.  Applying the finite grid functional $L_j$ term-by-term is justified because $L_j$ is a finite linear combination of point evaluations on the grid.  The delta property \eqref{eq:delta-property} then gives $u_j=t_j^{(a,c)}(f)$.
\end{proof}

\begin{remark}\label{rem:not-uniqueness}
The theorem should not be read backwards.  It does not prove that a given meromorphic function $f$ equals the formal series formed from its Taylor coefficients.  Flat functions from \cref{prop:flat-functions} are the obstruction.  In the kernel application below, the obstruction is not a defect but the mechanism producing the complementary basis series.
\end{remark}

\section{The two-basis kernel}

We now specialize to the kernel \eqref{eq:intro-kernel}.  Define
\begin{equation}\label{eq:A-B}
A(z)=\frac{(cz/de,c/dez;q)_\infty}{(c^2z/bde,c^2/bdez;q)_\infty},
\qquad
B(z)=\frac{(bz,b/z;q)_\infty}{(cz,c/z;q)_\infty},
\end{equation}
and
\begin{equation}\label{eq:H-K}
H(z)=\frac{F(z)}{A(z)}=\frac{(cz/d,c/dz,cz/e,c/ez;q)_\infty}{(cz,c/z,cz/de,c/dez;q)_\infty},
\end{equation}
\begin{equation}\label{eq:K}
K(z)=\frac{F(z)}{B(z)}=\frac{(cz/d,c/dz,cz/e,c/ez;q)_\infty}{(bz,b/z,c^2z/bde,c^2/bdez;q)_\infty}.
\end{equation}
Thus $F=A H=B K$ and
\begin{equation}\label{eq:Psi-as-Phi}
        \ps_k(z;b,c,d,e)=\ph_k\left(z;\frac{c}{de},\frac{c^2}{bde}\right).
\end{equation}

\subsection{The involution}

Define
\begin{equation}\label{eq:involution}
\mathcal I(b,c,d,e)=\left(\frac{c}{de},\frac{c^2}{bde},\frac{c}{be},\frac{c}{bd}\right)=(b',c',d',e').
\end{equation}

\begin{proposition}\label{prop:involution}
The map $\mathcal I$ is an involution.  It exchanges the two bases, the two prefactors, and the two normalized kernels:
\[
\ph_k(z;b',c')=\ps_k(z;b,c,d,e),\qquad
A_{b',c',d',e'}(z)=B_{b,c}(z),\qquad
B_{b',c'}(z)=A_{b,c,d,e}(z),
\]
\[
        H_{b',c',d',e'}(z)=K_{b,c,d,e}(z),
        \qquad
        K_{b',c',d',e'}(z)=H_{b,c,d,e}(z).
\]
\end{proposition}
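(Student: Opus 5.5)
The plan is a direct verification. Everything reduces to checking how the few parameter combinations that enter \eqref{eq:intro-kernel}, \eqref{eq:intro-bases} and \eqref{eq:A-B} transform under $\mathcal I$. I would first record the images of these combinations. Then I would read off the exchange statements for the bases and prefactors, and finally deduce those for $H$ and $K$ from $F=AH=BK$ rather than computing them separately.

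\emph{Step 1 (involution).} Write $(b',c',d',e')=\mathcal I(b,c,d,e)$ and compute the four monomials that define $\mathcal I(b',c',d',e')$. One has
\[
\frac{c'}{d'e'}=\frac{c^2/bde}{c^2/b^2de}=b,\qquad
\frac{c'^2}{b'd'e'}=\frac{c^4/b^2d^2e^2}{(c/de)(c^2/b^2de)}=c,
\]
\[
\frac{c'}{b'e'}=\frac{c^2/bde}{(c/de)(c/bd)}=d,\qquad
\frac{c'}{b'd'}=\frac{c^2/bde}{(c/de)(c/be)}=e .
\]
Hence $\mathcal I^2=\Id$.

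\emph{Step 2 (bases and prefactors).} The identity $\ph_k(z;b',c')=\ps_k(z;b,c,d,e)$ is \eqref{eq:Psi-as-Phi}, since $b'=c/de$ and $c'=c^2/bde$. The first two computations of Step 1 say that $c'/d'e'=b$ and $c'^2/b'd'e'=c$. Substituting into \eqref{eq:A-B} gives $A_{b',c',d',e'}(z)=(bz,b/z;q)_\infty/(cz,c/z;q)_\infty=B_{b,c}(z)$. Likewise $B_{b',c'}(z)=(b'z,b'/z;q)_\infty/(c'z,c'/z;q)_\infty$ is literally $A_{b,c,d,e}(z)$.

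\emph{Step 3 (kernels).} I would show that $F$ itself is $\mathcal I$-invariant. Its numerator uses $c/d$ and $c/e$, and
\[
\frac{c'}{d'}=\frac{c^2/bde}{c/be}=\frac cd,\qquad \frac{c'}{e'}=\frac ce .
\]
Its denominator uses the pair $\{c,\,c^2/bde\}$, which Step 1 shows is mapped to $\{c^2/bde,\,c\}$. Therefore $F_{b',c',d',e'}=F_{b,c,d,e}$. Then $H'=F'/A'=F/B=K$ and $K'=F'/B'=F/A=H$ by \eqref{eq:H-K} and \eqref{eq:K}.

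There is no real obstacle here. The only point needing care is the slash convention in the monomial computations, which is why I would display the four quotients of Step 1 explicitly. Genericity is preserved automatically, because the denominators in the primed formulas are exactly the unprimed ones reordered.
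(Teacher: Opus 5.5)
Your proof is correct and follows the paper's route. The paper records the same four quotients and then gets the exchange of bases, prefactors and kernels by substitution into \eqref{eq:A-B}, \eqref{eq:H-K} and \eqref{eq:K}. Your only variation is to prove the $\mathcal I$-invariance of $F$ via $c'/d'=c/d$ and $c'/e'=c/e$, then use $F=AH=BK$; this makes explicit the two extra quotients that the paper's direct substitution into the formulas for $H$ and $K$ uses without displaying them.
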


\begin{proof}
A direct calculation gives
\[
\frac{c'}{d'e'}=b,
\qquad
\frac{(c')^2}{b'd'e'}=c,
\qquad
\frac{c'}{b'e'}=d,
\qquad
\frac{c'}{b'd'}=e.
\]
The remaining identities follow by substitution in \eqref{eq:A-B}, \eqref{eq:H-K}, and \eqref{eq:K}.
\end{proof}

\subsection{Lowering laws and Taylor coefficients}

\begin{proposition}[lowering law for $H$]\label{prop:H-lowering}
One has
\begin{equation}\label{eq:H-lowering}
D_{c,q}H(z)=\frac{2c(1-d)(1-e)(1-c^2/deq)}{de(1-q)}
H(z;cq^{3/2},dq,eq),
\end{equation}
where the right-hand side denotes the same normalized kernel $H$ with $c,d,e$ replaced by $cq^{3/2},dq,eq$.
\end{proposition}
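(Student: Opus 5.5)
The plan is to compute $D_{c,q}H$ directly from \eqref{eq:Dq} and \eqref{eq:Dcq}. The first step is to factor out of both shifted values $H(q^{\pm1/2}z)$ the common infinite products. That common factor should turn out to be exactly $H(z;cq^{3/2},dq,eq)$. What remains is a purely rational identity.

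\textbf{Step 1: the common factor.} Under $c\mapsto cq^{3/2}$, $d\mapsto dq$, $e\mapsto eq$ one has
\[
cz/d\mapsto cq^{1/2}z/d,\qquad cz/de\mapsto cq^{-1/2}z/de,\qquad cz\mapsto cq^{3/2}z,
\]
and similarly with $z$ replaced by $1/z$. For each factor $(\alpha q^{\pm1/2}z;q)_\infty$ in $H(q^{\pm 1/2}z)$ we split off finitely many factors using $(x q^{-1};q)_\infty=(1-xq^{-1})(x;q)_\infty$. Concretely, write
\begin{align*}
H(q^{1/2}z)&=H(z;cq^{3/2},dq,eq)\,
\frac{(1-cq^{-1/2}/dz)(1-cq^{-1/2}/ez)(1-cq^{-1/2}z/de)}{(1-cq^{1/2}z)(1-cq^{-1/2}/z)(1-cq^{1/2}/z)},\\
H(q^{-1/2}z)&=H(z;cq^{3/2},dq,eq)\,
\frac{(1-cq^{-1/2}z/d)(1-cq^{-1/2}z/e)(1-cq^{-1/2}/dez)}{(1-cq^{-1/2}z)(1-cq^{1/2}z)(1-cq^{1/2}/z)}.
\end{align*}
The precise bookkeeping of which linear factors survive is the only place where care is needed, and I would double-check it by the $z\mapsto 1/z$ symmetry relating the two lines.

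\textbf{Step 2: reduction to a rational identity.} Multiply by the prefactor $(1-czq^{-1/2})(1-czq^{1/2})(1-cq^{-1/2}/z)(1-cq^{1/2}/z)$ of $D_{c,q}$. This clears all denominators, so that
\[
D_{c,q}H(z)=H(z;cq^{3/2},dq,eq)\,\frac{2\,N(z)}{(q^{1/2}-q^{-1/2})(z-z^{-1})},
\]
where
\[
N(z)=(1-cq^{-1/2}z)\,P(1/z)-(1-cq^{-1/2}/z)\,P(z),\qquad
P(z)=(1-cq^{-1/2}z/d)(1-cq^{-1/2}z/e)(1-cq^{-1/2}/dez).
\]
(The signs and assignment are to be confirmed against Step 1.) The claim \eqref{eq:H-lowering} is then equivalent to
\[
N(z)=\kappa\,(z-z^{-1}),\qquad
\kappa=\frac{c(1-d)(1-e)(1-c^2/deq)}{de\,q^{1/2}},
\]
after matching the factor $(q^{1/2}-q^{-1/2})/(1-q)=-q^{-1/2}$.

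\textbf{Step 3: the main obstacle, proving $N(z)=\kappa(z-z^{-1})$.} I would argue structurally first. The function $N$ is a Laurent polynomial with $N(1/z)=-N(z)$. Its degree is at most $2$ a priori. The coefficient of $z^{2}$, namely $c^{2}q^{-1}/de$ from the first term minus the same monomial from the second, should cancel. If so, $N$ is an antisymmetric Laurent polynomial of degree at most $1$, hence a scalar multiple of $z-z^{-1}$. The scalar is then read off from the coefficient of $z$; it should factor as $\kappa$.

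If the degree-$2$ cancellation is not immediate, my fallback is different. I would view the identity as the $p=0$ case of the Weierstrass--Riemann addition formula \eqref{eq:weierstrass-addition-input}. Alternatively, I would verify it at three convenient points, say $z=1$, $z=dq^{1/2}/c$ and $z=eq^{1/2}/c$, where one of the two terms of $N$ vanishes. I expect this coefficient and factorization check, producing exactly $(1-d)(1-e)(1-c^2/deq)$, to be the only genuinely computational step; everything else is bookkeeping of shifted factorials.
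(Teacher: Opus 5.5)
Your proposal is correct and follows the paper's argument: pull the common kernel $H(z;cq^{3/2},dq,eq)$ out of $H(q^{\pm1/2}z)$ with the half-shift identities for $(\alpha z,\alpha/z;q)_\infty$, then simplify the remaining rational factor. You carry it out in more detail than the paper does; your Step~1 factorizations are correct, and the antisymmetry-plus-degree argument for $N$ works because the $z^{\pm2}$ terms do cancel. There is one sign slip in your target: since your own conversion factor is $(q^{1/2}-q^{-1/2})/(1-q)=-q^{-1/2}$, you need $\kappa=-c(1-d)(1-e)(1-c^2/deq)/(de\,q^{1/2})$; this negative value is exactly what the $z$-coefficient of $N$ gives, so the stated constant comes out correctly.
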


\begin{proof}
Write $P_\alpha(z)=(\alpha z,\alpha/z;q)_\infty$.  The identities
\[
P_\alpha(q^{1/2}z)=(1-\alpha zq^{-1/2})P_{\alpha q^{1/2}}(z),
\qquad
P_\alpha(q^{-1/2}z)=(1-\alpha/(zq^{1/2}))P_{\alpha q^{1/2}}(z)
\]
reduce both shifted values of $H$ to the common kernel $H(z;cq^{3/2},dq,eq)$.  Substitution in \eqref{eq:Dcq} and simplification gives \eqref{eq:H-lowering}.
\end{proof}

\begin{proposition}[lowering law for $K$]\label{prop:K-lowering}
One has
\begin{equation}\label{eq:K-lowering}
D_{c^2/bde,q}K(z)=
\frac{2b(1-c/be)(1-c/bd)(1-c^2/deq)}{1-q}
K(z;bq^{1/2},cq^{1/2},d,e).
\end{equation}
\end{proposition}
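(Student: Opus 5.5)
The most economical route is to deduce \eqref{eq:K-lowering} from \cref{prop:H-lowering} via the involution of \cref{prop:involution}. I would still carry out the direct computation in parallel, because the parameter bookkeeping in the shifted kernel is where errors can occur.

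\emph{Involution route.} By \cref{prop:involution}, $K_{b,c,d,e}=H_{b',c',d',e'}$ with $(b',c',d',e')=\mathcal I(b,c,d,e)$. In particular $c'=c^2/bde$, so the operator in \eqref{eq:K-lowering} is exactly $D_{c',q}$. Applying \eqref{eq:H-lowering} with primed parameters gives the scalar
\[
\frac{2c'(1-d')(1-e')(1-c'^2/d'e'q)}{d'e'(1-q)}.
\]
Using $c'/d'e'=b$, $d'=c/be$, $e'=c/bd$ and $c'^2/d'e'=c^2/de$, this equals
\[
\frac{2b(1-c/be)(1-c/bd)(1-c^2/deq)}{1-q},
\]
which is the claimed scalar. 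It then remains to identify the shifted primed kernel with $K(z;bq^{1/2},cq^{1/2},d,e)$. For this I would compare the four numerator and four denominator infinite products factor by factor rather than pulling the shifted quadruple back through $\mathcal I$. The reason is that $H$ does not depend on $b'$, so the natural comparison is at the level of the eight product parameters $\alpha$ in the factors $P_\alpha(z)=(\alpha z,\alpha/z;q)_\infty$.

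\emph{Direct route.} Write
\[
K=\frac{P_{c/d}\,P_{c/e}}{P_b\,P_{c^2/bde}}
\]
and use the half-shift identities from the proof of \cref{prop:H-lowering}:
\[
P_\alpha(q^{\pm1/2}z)=(1-\alpha z^{\pm1}q^{-1/2})\,P_{\alpha q^{1/2}}(z).
\]
Both $K(q^{1/2}z)$ and $K(q^{-1/2}z)$ then become a common factor
\[
\frac{P_{cq^{1/2}/d}\,P_{cq^{1/2}/e}}{P_{bq^{1/2}}\,P_{c^2q^{1/2}/bde}}
\]
times a rational function of degree $\le 2$ in $z$ and $1/z$. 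The factor $(1-\alpha zq^{-1/2})$ from the denominator product $P_{c^2/bde}$ is cancelled by the prefactor of $D_{c^2/bde,q}$ in \eqref{eq:Dcq}, which is the purpose of choosing the well-poised parameter $c^2/bde$. The common factor is precisely $K(z;bq^{1/2},cq^{1/2},d,e)$: the numerator parameters $c/d,c/e$ become $cq^{1/2}/d,cq^{1/2}/e$; the factor $P_b$ becomes $P_{bq^{1/2}}$; and $(cq^{1/2})^2/(bq^{1/2}de)=c^2q^{1/2}/bde$. What remains is to show that the difference of the two rational prefactors, divided by $(q^{1/2}-q^{-1/2})(z-z^{-1})/2$, is the constant $2b(1-c/be)(1-c/bd)(1-c^2/deq)/(1-q)$.

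\emph{Main obstacle.} The crux is this last step: the difference of the two rational prefactors must be divisible by $z-z^{-1}$ with a $z$-independent quotient. I would first check that the difference is antisymmetric under $z\mapsto z^{-1}$ and vanishes at $z=\pm1$. I would then evaluate the quotient at one convenient point, for instance $z=bq^{-1/2}$ or $z\to\infty$ after comparing leading coefficients, to read off the constant. The factorization into $(1-c/be)(1-c/bd)(1-c^2/deq)$ should come from a three-term (degree-two) identity of the same type as the $p=0$ case of \eqref{eq:weierstrass-addition-input}. The result can be double-checked against the involution computation above, which independently fixes the scalar.
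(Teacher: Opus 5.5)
Your scalar is correct, and your involution route is exactly the paper's one-line proof. The step you postpone, however, matching the shifted primed kernel with $K(z;bq^{1/2},cq^{1/2},d,e)$, cannot be completed, because \eqref{eq:K-lowering} is false as printed: it is a misprint. Write $P_\alpha(z)=(\alpha z,\alpha/z;q)_\infty$ and $c'=c^2/bde$. Using $c'/d'=c/d$, $c'/e'=c/e$ and $c'/d'e'=b$, the kernel delivered by \cref{prop:H-lowering} with primed parameters is
\[
H(z;c'q^{3/2},d'q,e'q)=\frac{P_{cq^{1/2}/d}\,P_{cq^{1/2}/e}}{P_{c'q^{3/2}}\,P_{bq^{-1/2}}}=K(z;bq^{-1/2},cq^{1/2},d,e).
\]
By contrast, $K(z;bq^{1/2},cq^{1/2},d,e)$ has denominator $P_{bq^{1/2}}P_{c'q^{1/2}}$. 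The two kernels differ by the nonconstant factor $(1-c'q^{1/2}z)(1-c'q^{1/2}/z)/\bigl((1-bq^{-1/2}z)(1-bq^{-1/2}/z)\bigr)$. What the involution argument actually proves, and what is true, is \eqref{eq:K-lowering} with the same scalar but with $K(z;bq^{-1/2},cq^{1/2},d,e)$ on the right. This corrected version is also the one the iteration \eqref{eq:iterated-D} needs for computing $g_k$: its well-poised parameter is $c'q^{3/2}$ and its Taylor point is $(c/de)q^{1/2}$. The factor-by-factor comparison you deferred is precisely where the misprint would have surfaced.

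Your direct route shows the same problem concretely. First, the half-shift identities you copied from the proof of \cref{prop:H-lowering} have $z$ and $1/z$ interchanged. The correct identities are $P_\alpha(q^{1/2}z)=(1-\alpha q^{-1/2}/z)P_{\alpha q^{1/2}}(z)$ and $P_\alpha(q^{-1/2}z)=(1-\alpha q^{-1/2}z)P_{\alpha q^{1/2}}(z)$; with your version the computation returns the negative of the scalar. More seriously, the common factor of $K(q^{\pm1/2}z)$ is indeed $K(z;bq^{1/2},cq^{1/2},d,e)$, but the prefactor of $D_{c',q}$ clears only the linear factor coming from $P_{c'}$. The factor $1-bq^{-1/2}z^{\mp1}$ coming from $P_b$ stays in the denominator. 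So the multipliers are not Laurent polynomials, and the quotient you want to be constant is the scalar times the nonconstant factor above. To finish, trade $P_{bq^{1/2}}P_{c'q^{1/2}}$ for $P_{bq^{-1/2}}P_{c'q^{3/2}}$. The prefactored shifted values then become $\Lambda(z)$ and $\Lambda(1/z)$ times $K(z;bq^{-1/2},cq^{1/2},d,e)$, with
\[
\Lambda(z)=(1-c'q^{-1/2}z)(1-bq^{-1/2}z)(1-cq^{-1/2}/dz)(1-cq^{-1/2}/ez).
\]
$\Lambda(z)-\Lambda(1/z)$ is a constant multiple of $z-z^{-1}$ precisely because its $z^{2}$ and $z^{-2}$ coefficients, $bc'/q$ and $c^2/deq$, agree. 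Your ``antisymmetry plus one evaluation'' test would not detect this condition, since an antisymmetric Laurent polynomial of degree two need not be a constant multiple of $z-z^{-1}$.
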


\begin{proof}
Apply \cref{prop:H-lowering} after the involution \eqref{eq:involution} and then translate back using \cref{prop:involution}.
\end{proof}

The zeroth values are
\begin{equation}\label{eq:Hb-Kcde}
H(b)=\frac{(bc/d,c/bd,bc/e,c/be;q)_\infty}{(bc,c/b,bc/de,c/bde;q)_\infty},
\end{equation}
\begin{equation}\label{eq:Kcde}
K(c/de)=\frac{(c^2/d^2e,e,c^2/de^2,d;q)_\infty}{(bc/de,bde/c,c^3/bd^2e^2,c/b;q)_\infty}.
\end{equation}

\begin{theorem}[Taylor coefficients of the normalized kernels]\label{thm:kernel-coeff}
The Taylor coefficients of $H$ relative to $(b,c)$ are
\begin{equation}\label{eq:f-coeff-taylor}
 t_k^{(b,c)}(H)=H(b)f_k,
\end{equation}
where
\begin{equation}\label{eq:f-coeff}
 f_k=\frac{1-bcq^{2k-1}}{1-bcq^{-1}}
\frac{(bcq^{-1},d,e,c^2/deq;q)_k}{(q,bc/d,bc/e,bdeq/c;q)_k}q^k.
\end{equation}
The Taylor coefficients of $K$ relative to $(c/de,c^2/bde)$ are
\begin{equation}\label{eq:g-coeff-taylor}
 t_k^{(c/de,c^2/bde)}(K)=K(c/de)g_k,
\end{equation}
where
\begin{equation}\label{eq:g-coeff}
 g_k=\frac{1-c^3q^{2k-1}/bd^2e^2}{1-c^3/bd^2e^2q}
\frac{(c^3/bd^2e^2q,c/bd,c/be,c^2/deq;q)_k}{(q,c^2/de^2,c^2/d^2e,cq/bde;q)_k}q^k.
\end{equation}
In particular $f_k=\OO(q^k)$ and $g_k=\OO(q^k)$ under the generic nonvanishing assumptions.
\end{theorem}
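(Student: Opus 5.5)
The plan is to compute $t_k^{(b,c)}(H)$ straight from the coefficient formula \eqref{eq:taylor-coeff-finite}, using the lowering law for $H$. The $K$ statement will then follow by transport along the involution.

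\emph{Iterating the lowering law.} Iterate \cref{prop:H-lowering} exactly as the iterated operator \eqref{eq:iterated-D} is built. At step $j$ the current kernel has parameters $(cq^{3j/2},dq^j,eq^j)$, and the operator applied is $D_{cq^{3j/2},q}$, which is the correct well-poised operator for that kernel. Each step contributes the factor
\[
\frac{2cq^{3j/2}(1-dq^j)(1-eq^j)(1-c^2q^{j-1}/de)}{deq^{2j}(1-q)},
\]
since $c^2/de$ changes by $q^{3j}/q^{2j}=q^j$. Taking the product over $j=0,\dots,k-1$ gives
\[
D_{c,q}^{(k)}H(z)=\frac{(2c/de)^k q^{-k(k-1)/4}(d,e,c^2/deq;q)_k}{(1-q)^k}\,H(z;cq^{3k/2},dq^k,eq^k).
\]

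\emph{Evaluating at the node.} Set $z=bq^{k/2}$ and use the explicit product \eqref{eq:H-K}. With $c_k=cq^{3k/2}$, $d_k=dq^k$, $e_k=eq^k$, we have $c_kz=bcq^{2k}$, $c_k/z=cq^k/b$, $c_kz/d_k=bcq^k/d$, $c_k/(d_kz)=c/(bd)$, $c_kz/d_ke_k=bc/de$ and $c_k/(d_ke_kz)=cq^{-k}/(bde)$.

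Divide this value by $H(b)$ from \eqref{eq:Hb-Kcde}. Each infinite product becomes a finite ratio:
\begin{itemize}
\item $(bcq^{2k};q)_\infty/(bc;q)_\infty=1/(bc;q)_{2k}$;
\item $(c/bq^{-k};q)_\infty$ against $(c/b;q)_\infty$ gives $1/(c/b;q)_k$;
\item $(bcq^k/d;q)_\infty$ against $(bc/d;q)_\infty$ gives $1/(bc/d;q)_k$, and likewise for $e$;
\item $(cq^{-k}/bde;q)_\infty=(cq^{-k}/bde;q)_k\,(c/bde;q)_\infty$, and the finite part is rewritten as $(bdeq/c;q)_k\,(-c/bde)^k q^{-\binom{k+1}{2}}$.
\end{itemize}

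Insert this into \eqref{eq:taylor-coeff-finite} with $a=b$, whose denominator is $(2b)^k(q,c/b,bcq^{k-1};q)_k$. The factor $(c/b;q)_k$ combines with the one above. Then write
\[
(bc;q)_{2k}\,(bcq^{k-1};q)_k=\frac{(bcq^{-1};q)_k\,(1-bcq^{2k-1})}{1-bcq^{-1}}\,(bc;q)_{2k}\cdots
\]
in the standard way that produces the very-well-poised factor. Collecting terms should give exactly $H(b)f_k$ as in \eqref{eq:f-coeff}.

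\emph{Main obstacle: the bookkeeping.} The two places I expect trouble are:
\begin{itemize}
\item the powers of $q$ coming from $q^{-k(k-1)/4}$ twice (once from the lowering constant, once from the coefficient normalization, which should cancel) together with the $q^{-\binom{k+1}{2}}$ from the reversal;
\item the signs $(-1)^k$ and the $c/de$, $c/bde$ monomials, which must collapse to the single factor $q^k$.
\end{itemize}
I would first check the cases $k=0$ and $k=1$ against the formula. As a guide, the net $(bc;q)$ pieces must be $(bcq^{-1};q)_k(1-bcq^{2k-1})/(1-bcq^{-1})$ over $(bc;q)_{2k}$-type ratios, and the leftover $(c/b;q)_k$ must cancel completely.

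\emph{The $K$ statement.} By \eqref{eq:Psi-as-Phi} and \cref{prop:involution}, $K_{b,c,d,e}=H_{b',c',d',e'}$, and the Taylor family relative to $(c/de,c^2/bde)=(b',c')$ is the $\ph$-family for the primed data. So \eqref{eq:g-coeff-taylor} is \eqref{eq:f-coeff-taylor} applied to the primed parameters. Substituting $b'c'=c^3/bd^2e^2$, $d'=c/be$, $e'=c/bd$, $c'^2/d'e'q=c^2/deq$, $b'c'/d'=c^2/de^2$, $b'c'/e'=c^2/d^2e$ and $b'd'e'q/c'=cq/bde$ gives \eqref{eq:g-coeff}.

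Finally, $f_k=\OO(q^k)$ because the ratio of the finite $q$-shifted factorials in \eqref{eq:f-coeff} converges as $k\to\infty$ under the generic nonvanishing assumptions; the same argument gives $g_k=\OO(q^k)$.
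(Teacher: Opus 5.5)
Your plan is the paper's proof: iterate \cref{prop:H-lowering}, evaluate at $z=bq^{k/2}$, and divide by $H(b)$ using $(bc;q)_{2k}/(bcq^{k-1};q)_k=\frac{1-bcq^{2k-1}}{1-bcq^{-1}}(bcq^{-1};q)_k$ (a ratio, not the product you wrote) together with the reversal of $(cq^{-k}/bde;q)_k$; the $K$ case then follows by the involution. Your exponent $q^{-k(k-1)/4}$ in the iterated lowering law is the correct one, since the $j$th step contributes $2(c/de)q^{-j/2}$ (the paper's display prints $q^{k(k-1)/4}$), but it does not cancel the $q^{-k(k-1)/4}$ of \eqref{eq:taylor-coeff-finite}: the two combine to $q^{-\binom{k}{2}}$, which against the $q^{\binom{k+1}{2}}$ from inverting the reversal factor leaves exactly $q^k$, and $(bc;q)_{2k}$ and $(c/b;q)_k$ enter $H(bq^{k/2};cq^{3k/2},dq^k,eq^k)/H(b)$ in the numerator, so $(c/b;q)_k$ cancels.
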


\begin{proof}
Iterating \cref{prop:H-lowering} gives
\[
D_{c,q}^{(k)}H(z)=\frac{(2c/de)^kq^{k(k-1)/4}(d,e,c^2/deq;q)_k}{(1-q)^k}
H(z;cq^{3k/2},dq^k,eq^k).
\]
Evaluate at $z=bq^{k/2}$ and compare the shifted kernel value with \eqref{eq:Hb-Kcde}.  The elementary identities
\[
\frac{(bc;q)_{2k}}{(bcq^{k-1};q)_k}=\frac{1-bcq^{2k-1}}{1-bcq^{-1}}(bcq^{-1};q)_k,
\]
and
\[
(cq^k/bde;q)_k=(c/bde)^kq^{k(k+1)/2}(bdeq/c;q)_k
\]
then yield \eqref{eq:f-coeff}.  The formula for $g_k$ follows either by the same calculation applied to \cref{prop:K-lowering}, or by applying the involution \cref{prop:involution} to \eqref{eq:f-coeff}.
\end{proof}

\subsection{The two-basis kernel theorem}

\begin{theorem}[two-basis kernel identity]\label{thm:two-basis-kernel}
Assume \eqref{eq:q-assumption} and the generic nonvanishing and pole-avoidance hypotheses described above.  Then, locally uniformly on symmetric compact subsets avoiding the pole sets,
\begin{align}\label{eq:two-basis-identity}
F(z)
&=A(z)H(b)\sum_{k=0}^{\infty} f_k\ph_k(z;b,c)
\nonumber\\
&\quad+B(z)K(c/de)\sum_{k=0}^{\infty}g_k\ps_k(z;b,c,d,e),
\end{align}
where $f_k$ and $g_k$ are given in \eqref{eq:f-coeff} and \eqref{eq:g-coeff}.
Equivalently,
\begin{align}\label{eq:two-basis-expanded}
&\frac{(cz/d,c/dz,cz/e,c/ez;q)_\infty}
{(cz,c/z,c^2z/bde,c^2/bdez;q)_\infty}
\nonumber\\
&=\frac{(cz/de,c/dez;q)_\infty}{(c^2z/bde,c^2/bdez;q)_\infty}\,H(b)
\sum_{k=0}^{\infty}
\frac{1-bcq^{2k-1}}{1-bcq^{-1}}
\frac{(bcq^{-1},d,e,c^2/deq,bz,b/z;q)_k}{(q,bc/d,bc/e,bdeq/c,cz,c/z;q)_k}q^k
\nonumber\\
&\quad+\frac{(bz,b/z;q)_\infty}{(cz,c/z;q)_\infty}\,K(c/de)
\sum_{k=0}^{\infty}
\frac{1-c^3q^{2k-1}/bd^2e^2}{1-c^3/bd^2e^2q}
\nonumber\\
&\qquad\qquad\times
\frac{(c^3/bd^2e^2q,c/bd,c/be,c^2/deq,cz/de,c/dez;q)_k}
{(q,c^2/de^2,c^2/d^2e,cq/bde,c^2z/bde,c^2/bdez;q)_k}q^k.
\end{align}
\end{theorem}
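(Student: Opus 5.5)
Both series converge absolutely and locally uniformly: by \cref{thm:kernel-coeff} we have $f_k,g_k=\OO(q^k)$, and by \cref{lem:basis-bounded} the bases $\ph_k$, $\ps_k$ are uniformly bounded on pole-free compacts. Hence the right-hand side $G(z)$ of \eqref{eq:two-basis-identity} is a symmetric meromorphic function on $\Cstar$. Its possible poles come only from $A$, $B$ and the bases, that is, from the points $\{cq^m,q^m/c\}$ and $\{c^2q^m/bde,\,bde\,q^m/c^2\}$. These are exactly the poles of $F$.

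The plan is to show that the residual
\[
\Delta(z)=\frac{F(z)-G(z)}{A(z)}
\]
vanishes identically. The first step is a grid computation. On the Taylor grid $z=bq^m$, the function $B$ vanishes through $(b/z;q)_\infty$ (\cref{ex:flat-factor}), so $\Delta(bq^m)=H(bq^m)-H(b)\sum_k f_k\ph_k(bq^m;b,c)$. Here $\ph_k(bq^m;b,c)=0$ for $k>m$, so the sum is finite. It is a terminating very-well-poised ${}_8\phi_7$ with balancing parameters $q^{-m}$ and $bq^m$. I will evaluate it by Jackson's summation \eqref{eq:jackson-8phi7-input} with $a=bc/q$ and check that it reproduces $H(bq^m)/H(b)$. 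Equivalently, \cref{thm:coeff-identification} applied to the finite truncations shows that the Taylor data of $\Delta$ relative to $(b,c)$ vanish.

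By the involution of \cref{prop:involution}, the same computation on the dual grid $z=(c/de)q^m$ shows that
\[
\frac{F-G}{B}
\]
vanishes there. The first-family sum drops out there because $A$ vanishes at those points.

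Next I clear poles. I multiply $F-G$ by $(cz,c/z,c^2z/bde,c^2/bdez;q)_\infty$ to obtain an entire function $E(z)$ on $\Cstar$, symmetric under $z\mapsto1/z$. The key structural point is that $E(qz)$ and $E(z)$ are related by an explicit quasi-periodicity of theta type. This holds because each summand $A\,\ph_k$ and $B\,\ps_k$ becomes, after clearing, a product of theta-like infinite products times a finite Laurent factor.

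The step I expect to be the main obstacle is making this precise. I would split $E$ annulus by annulus, in the sense of the abstract's ``annular profile cancellations'': on $|q|^{N+1}<|z|<|q|^N$ I would compare the Laurent expansions of the cleared terms. Using the Weierstrass addition formula \eqref{eq:weierstrass-addition-input}, I would show that the cleared difference lies in a theta space of small dimension, with characteristic determined by $b$, $c$, $d$, $e$ (dimension $2$ for symmetric functions). A symmetric theta function of degree $2$ is determined by its values at two generic points. The grid vanishing at $z=b$ and at $z=c/de$, from the two preceding computations, then forces $E\equiv0$.

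If the theta-space bound fails to come out cleanly, my fallback is an analytic argument, which is where Rogers' ${}_6\phi_5$ summation \eqref{eq:rogers-6phi5-input} would enter. I would note that $\Delta$ vanishes on the whole geometric sequence $bq^m$, which accumulates at $0$. Although $0$ is an essential singularity, I would control the growth of $E$ there using \eqref{eq:rogers-6phi5-input}. Specifically, evaluating the limiting case $z\to0$ of the coefficient sums, namely the $q^k$-weighted ${}_6W_5$ tails, gives the leading profile of each side on small annuli. Matching these profiles would show that $E$ has too many zeros, relative to its theta-type growth, unless $E\equiv0$.

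Finally, I would recover the expanded form \eqref{eq:two-basis-expanded} by substituting \eqref{eq:A-B}, \eqref{eq:f-coeff} and \eqref{eq:g-coeff} into \eqref{eq:two-basis-identity}.
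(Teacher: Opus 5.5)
Your convergence step and your grid computation are correct. The grid computation is also more direct than the paper's route through Taylor-coefficient matching and flatness. With $a=bc/q$ and $(b_1,\dots,b_5)=(d,e,c^2/deq,b^2q^m,q^{-m})$, the sum $\sum_{k\le m}f_k\ph_k(bq^m;b,c)$ is a balanced terminating ${}_8W_7$, and Jackson's formula returns exactly $H(bq^m)/H(b)$; the involution then gives the dual grid.

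\textbf{The gap is in the closing step.} The claimed theta-type quasi-periodicity of $E$ is false. Each cleared term is built from pairs $(\alpha z,\alpha/z;q)_\infty$. Under $z\mapsto qz$ such a pair is multiplied by the non-monomial factor $(1-\alpha/qz)/(1-\alpha z)$, and $\alpha$ runs through $c/d$, $c/e$, and $cq^k$, $c^2q^k/bde$ for every $k$. So $E$ lies in no finite-dimensional theta space. A sign that the claim carries everything: in a two-dimensional space, vanishing at $z=b$ and $z=c/de$ would suffice, but these $m=0$ cases hold trivially by the normalizations $H(b)$ and $K(c/de)$. A degree-two theta identity does occur, but only for the limiting profile. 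With $(b/c)^NQ(\lambda q^Nw)=\mathcal Q(q^N,w)$, the statement $\mathcal Q(0,w)=0$ is \cref{prop:first-profile-cancellation}, which the paper proves from Rogers' ${}_6\phi_5$ exactly as in your fallback.

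\textbf{The fallback does not close the argument either.} The two grids supply precisely the zeros of $Z_{b,c/de}=(bz,b/z,cz/de,c/dez;q)_\infty$. This product has the same quadratic annular growth $|q|^{-N(N+1)}$ as $E$, so there are not ``too many'' zeros. Leading-profile matching only improves $Q=E/Z_{b,c/de}$ to $\OO((c/b)^Nq^N)$, far from the decay \eqref{eq:Q-decay-target} needed for boundedness of $E$. The paper obtains every coefficient $[s^J]\mathcal Q$, not just the leading one, from \cref{thm:global-profile-identity}.

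\textbf{A shorter repair with your own ingredients.} Use holomorphy of $Q$ instead of aiming for boundedness of $E$.
\begin{enumerate}[label=\textup{(\roman*)}]
\item By your two grid computations and the $z\mapsto 1/z$ symmetry, $Q$ is holomorphic on $\Cstar$ for generic parameters, where the zeros of $Z_{b,c/de}$ are simple and distinct.
\item Take circles $|z|=r|q|^N$ of generic radius and apply the splitting of \cref{lem:q-annular-estimate} to the terms in \eqref{eq:Q-decomposition}. This gives $|Q_0|\le C|c/b|^N$, and $|R_k|,|S_k|\le C|c/b|^{N-k}$ for $k\le N$ and $\le C$ for $k>N$.
\item Combined with $f_k,g_k=\OO(q^k)$, this yields $\sup_{|z|=r|q|^N}|Q|\le C(N+1)\max(|q|,|c/b|)^N$.
\item If $|c|<|b|$, the maximum principle on the annuli between consecutive circles gives $Q(z)\to0$ as $z\to0$, and symmetry gives the same as $z\to\infty$. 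Liouville then gives $Q\equiv0$.
\item Meromorphic continuation in $c$ removes the restriction $|c|<|b|$.
\end{enumerate}
As written, neither your main step nor your fallback supplies this argument or the paper's full profile hierarchy, so the proof is incomplete.
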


\begin{proof}
The estimates $f_k=\OO(q^k)$ and $g_k=\OO(q^k)$ from \cref{thm:kernel-coeff}, together with local boundedness of the two rational bases, give local uniform convergence on admissible compact subsets.  By \cref{cor:annular-closed-by-global-identity}, the pole-cleared residual satisfies the required annular removable-puncture estimate without Bailey's nonterminating ${}_8\phi_7$ summation; hence \(\Delta\equiv0\), which is \eqref{eq:two-basis-identity}.  Substitution of \(A,B,\ph_k,\ps_k,f_k,g_k\) gives \eqref{eq:two-basis-expanded}.
\end{proof}

\begin{remark}[normalization]
The factors $H(b)$ and $K(c/de)$ in \eqref{eq:two-basis-identity} are not optional.  They are the zeroth Taylor coefficients of $H$ and $K$.  Omitting them would force, for instance, $F(b)=A(b)$ under generic assumptions, which is false in general.
\end{remark}

\subsection{Complementary remainders}

\begin{proposition}[flatness of the complementary pieces]\label{prop:complement-flat}
The function
\[
        \frac{B(z)}{A(z)}\,K(c/de)\sum_{k=0}^\infty g_k\ps_k(z;b,c,d,e)
\]
has all Taylor coefficients relative to $(b,c)$ equal to zero, wherever the evaluations are defined.  Similarly,
\[
        \frac{A(z)}{B(z)}\,H(b)\sum_{k=0}^\infty f_k\ph_k(z;b,c)
\]
is flat relative to the transformed Taylor pair $(c/de,c^2/bde)$.
\end{proposition}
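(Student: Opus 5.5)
The plan is to reduce both statements to \cref{prop:flat-functions}: it suffices to show that each displayed function is symmetric, meromorphic near the relevant Taylor grid, and vanishes at every grid point. For the first function the grid is $\{bq^m:m\ge0\}$, and for the second it is $\{(c/de)q^m:m\ge0\}$.

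For the first function, write
\[
\frac{B(z)}{A(z)}=\frac{(bz,b/z;q)_\infty\,(c^2z/bde,c^2/bdez;q)_\infty}{(cz,c/z;q)_\infty\,(cz/de,c/dez;q)_\infty}.
\]
The numerator contains $(b/z;q)_\infty$, which vanishes at $z=bq^m$ for every $m\ge0$; this is \cref{ex:flat-factor} with $a=b$. Under the generic hypotheses, the denominator $(cz,c/z,cz/de,c/dez;q)_\infty$ does not vanish at $z=bq^m$. The factor $K(c/de)$ is a constant. It remains to check that the series $S(z)=\sum_k g_k\ps_k(z;b,c,d,e)$ is holomorphic and finite near each grid point $bq^m$. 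Its poles lie in $\{c^2q^j/bde,\ bdeq^j/c^2\}$, and these avoid the grid for generic parameters. Then \cref{lem:basis-bounded}, together with $g_k=\OO(q^k)$ from \cref{thm:kernel-coeff}, gives locally uniform convergence on a small symmetric neighbourhood of the finitely many points $bq^m$, $b^{-1}q^{-m}$ used at each stage. So $S$ is holomorphic there, and the product vanishes at every $bq^m$.

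One step needs care. \cref{cor:finite-grid-functional} expresses $L_j$ through the values $h(b),\dots,h(bq^j)$, but the paper's finite-sum coefficients come from \cref{thm:cooper-p0} and may have apparent poles. I would argue that for generic $b$ the evaluation $z=bq^{j/2}$ in \eqref{eq:p0-cooper} is away from the exceptional set, so the coefficients are finite and $L_j(h)$ is a genuine finite combination of the vanishing values. Nongeneric $b$ then follows by continuity in $b$, provided the operator evaluations are defined, as the statement stipulates.

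For the second function I would not repeat the computation but apply the involution $\mathcal I$ of \cref{prop:involution}. It swaps $A$ and $B$, swaps $H(b)\sum f_k\ph_k$ with $K(c/de)\sum g_k\ps_k$ (by the last sentence of the proof of \cref{thm:kernel-coeff}, $f_k$ and $g_k$ are exchanged), and sends the Taylor pair $(b,c)$ to $(c/de,c^2/bde)$. The second statement is therefore the image of the first. As a direct check, $A/B$ contains $(c/dez;q)_\infty$, which vanishes at $z=(c/de)q^m$.

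The main obstacle is justifying the term-by-term evaluation and the applicability of \cref{prop:flat-functions}. This means confirming that the grid points avoid both the poles of the complementary series and the zeros of the prefactor's denominator. Only this pole-avoidance bookkeeping needs to be checked explicitly.
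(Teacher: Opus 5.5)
Your proposal is correct and follows the paper's proof. The zero factor $(b/z;q)_\infty$ in $B(z)$ makes the function vanish on the grid $\{bq^m:m\ge0\}$, so \cref{prop:flat-functions} applies, and the second assertion follows from the involution of \cref{prop:involution} (or directly from the factor $(c/dez;q)_\infty$ in $A(z)$), with your extra checks (nonvanishing of the denominator of $B/A$, finiteness of the $\ps_k$-series, and finiteness of the Cooper coefficients at $z=bq^{j/2}$) merely making explicit the genericity that the paper leaves implicit.
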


\begin{proof}
The factor $B(z)$ contains $(bz,b/z;q)_\infty$, which vanishes at every point $z=bq^m$ through the factor $(b/z;q)_\infty$.  Thus the first displayed function vanishes on the $(b,c)$ Taylor grid, and \cref{prop:flat-functions} applies.  The second assertion follows from the involution \cref{prop:involution}; equivalently the factor $A(z)$ contains $(cz/de,c/dez;q)_\infty$, which vanishes at $z=(c/de)q^m$.
\end{proof}

\begin{corollary}[complementary limiting remainders]\label{cor:remainder-limit}
Let $R_n^{(b,c)}H$ be the Taylor remainder of $H$ relative to $(b,c)$, and let $R_n^{(c/de,c^2/bde)}K$ be the Taylor remainder of $K$ relative to the transformed pair.  Then
\begin{align}\label{eq:R-H-exact}
A(z)R_n^{(b,c)}H(z)
&=A(z)H(b)\sum_{k=n+1}^{\infty} f_k\ph_k(z;b,c)
\nonumber\\
&\quad+B(z)K(c/de)\sum_{k=0}^{\infty}g_k\ps_k(z;b,c,d,e),
\end{align}
and hence
\begin{equation}\label{eq:R-H-limit}
A(z)R_n^{(b,c)}H(z)\longrightarrow
B(z)K(c/de)\sum_{k=0}^{\infty}g_k\ps_k(z;b,c,d,e)
\end{equation}
locally uniformly on admissible compact subsets.  Similarly,
\begin{align}\label{eq:R-K-exact}
B(z)R_n^{(c/de,c^2/bde)}K(z)
&=B(z)K(c/de)\sum_{k=n+1}^{\infty} g_k\ps_k(z;b,c,d,e)
\nonumber\\
&\quad+A(z)H(b)\sum_{k=0}^{\infty}f_k\ph_k(z;b,c),
\end{align}
and therefore
\begin{equation}\label{eq:R-K-limit}
B(z)R_n^{(c/de,c^2/bde)}K(z)\longrightarrow
A(z)H(b)\sum_{k=0}^{\infty}f_k\ph_k(z;b,c).
\end{equation}
\end{corollary}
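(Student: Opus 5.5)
The corollary follows from \cref{thm:two-basis-kernel} by subtracting finite sums. No new analysis is needed beyond identifying the Taylor sums of $H$ and $K$, which \cref{thm:kernel-coeff} already supplies.

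\emph{Identifying the Taylor sum of $H$.} By \eqref{eq:f-coeff-taylor}, the $n$th Taylor sum of $H$ relative to $(b,c)$ is
\[
T_n^{(b,c)}H(z)=H(b)\sum_{k=0}^n f_k\ph_k(z;b,c).
\]
This uses only the finite coefficient extraction. In particular it does not claim that $H$ equals its Taylor series, so the flat-function caveat of \cref{rem:not-uniqueness} does not interfere.

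\emph{The exact remainder identity \eqref{eq:R-H-exact}.} Since $F=AH$, we have $A R_n^{(b,c)}H=F-A\,T_n^{(b,c)}H$. Substituting the identity \eqref{eq:two-basis-identity} for $F$, the terms $0\le k\le n$ of the first series cancel, and what remains is exactly \eqref{eq:R-H-exact}. This step is legitimate on admissible compact sets, where both series converge absolutely and locally uniformly. That convergence comes from $f_k,g_k=\OO(q^k)$ together with \cref{lem:basis-bounded}, applied to $\ph_k$ and to $\ps_k$; the latter via \eqref{eq:Psi-as-Phi}. We divide by $A$ only away from its zeros, or we simply read the identity as multiplied through by $A$, as stated.

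\emph{The limit \eqref{eq:R-H-limit}.} The tail $\sum_{k>n}f_k\ph_k$ is bounded by $\sup_{L,k}|\ph_k|\sum_{k>n}|f_k|$, which tends to $0$. The prefactors $A$ and $H(b)$ are bounded on $L$, so the tail term vanishes uniformly and \eqref{eq:R-H-limit} follows.

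\emph{The statements for $K$.} These follow in the same way from $F=BK$ and \eqref{eq:g-coeff-taylor}. Alternatively, one can apply the involution \cref{prop:involution}, which swaps $(A,H,f_k,\ph_k)$ with $(B,K,g_k,\ps_k)$ and leaves \eqref{eq:two-basis-identity} invariant.

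\emph{Where the difficulty lies.} The only genuine input is \cref{thm:two-basis-kernel} itself. \Cref{prop:complement-flat} explains why the complementary piece does not contradict \cref{prop:abstract-remainder}: the remainder has vanishing Taylor data on the $(b,c)$ grid because $B$ vanishes there, not because the remainder is zero. If anything needs care, it is checking that $T_n^{(b,c)}H$ is well defined, meaning the grid evaluations used by $L_j$ avoid the poles of $H$. This is covered by the generic hypotheses.
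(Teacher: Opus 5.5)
Your proposal is correct and follows essentially the same route as the paper: you identify the first $n+1$ terms of the $\ph$-series in \eqref{eq:two-basis-identity} with $A(z)\,T_n^{(b,c)}H(z)$, subtract them, and then kill the tail using $f_k=\OO(q^k)$ together with \cref{lem:basis-bounded}, obtaining the $K$-statements by the same argument or by the involution. The only cosmetic difference is that the paper phrases the coefficient identification by dividing \eqref{eq:two-basis-identity} by $A(z)$ and invoking \cref{prop:complement-flat}, so that the complementary term contributes no Taylor data; you instead take $t_k^{(b,c)}(H)=H(b)f_k$ directly from \cref{thm:kernel-coeff} and use flatness only as a consistency remark.
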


\begin{proof}
Divide \eqref{eq:two-basis-identity} by $A(z)$ and use \cref{prop:complement-flat}: the first Taylor coefficients of $H$ are exactly $H(b)f_k$, while the complementary term contributes no Taylor data.  Subtracting the first $n+1$ Taylor terms gives \eqref{eq:R-H-exact}.  The tail tends to zero locally uniformly because $f_k=\OO(q^k)$ and the basis is locally bounded.  The second assertion follows by applying the involution, or by the same argument after dividing \eqref{eq:two-basis-identity} by $B(z)$.
\end{proof}

\begin{remark}
Thus the two-basis kernel theorem is a Taylor theorem with a genuinely complementary remainder: a one-family remainder need not vanish, but may converge to the other basis expansion.
\end{remark}

\section{Pole-cleared residual and annular uniqueness}\label{sec:direct-remainder}

The Taylor-theoretic proof of the kernel theorem begins by constructing the two Taylor series from the operator, showing that the residual is flat on both multiplicative grids, clearing its possible poles, and then applying a uniqueness theorem.  This section records the pole-clearing and uniqueness mechanism.  The annular estimate that supplies boundedness at the puncture is proved in \cref{sec:annular-closure}, without using Bailey's nonterminating ${}_8\phi_7$ summation.

Let
\begin{equation}\label{eq:Delta-direct}
\Delta(z):=F(z)-A(z)H(b)\sum_{k=0}^{\infty}f_k\ph_k(z;b,c)
      -B(z)K(c/de)\sum_{k=0}^{\infty}g_k\ps_k(z;b,c,d,e).
\end{equation}
The two sums converge locally uniformly on admissible compact subsets by the estimates used in \cref{thm:two-basis-kernel}.  Put
\begin{equation}\label{eq:M-pole-clear}
        M(z):=(cz,c/z,c^2z/bde,c^2/bdez;q)_\infty,
        \qquad E(z):=M(z)\Delta(z).
\end{equation}
Then, without using Bailey's summation, the definition of the coefficients and the flatness mechanism imply the following two-grid vanishing statement.

\begin{proposition}[pole-cleared residual and grid zeros]\label{prop:pole-cleared-grid-zeros}
Assume that the Taylor grids avoid the poles of the normalized kernels and that the diagonal coefficients in the triangular relations of \cref{cor:finite-grid-functional} are nonzero.  Then the function $E$ is holomorphic on the punctured plane away from the possible accumulation point $0$, and
\begin{equation}\label{eq:E-grid-zeros}
        E(bq^m)=0,
        \qquad
        E\bigl((c/de)q^m\bigr)=0,
        \qquad m=0,1,2,\ldots,
\end{equation}
whenever these points do not collide with removable zero-pole cancellations.
Moreover $E$ has the explicit pole-cleared form
\begin{align}\label{eq:E-explicit}
E(z)&=(cz/d,c/dz,cz/e,c/ez;q)_\infty
\nonumber\\
&\quad-H(b)\sum_{k=0}^{\infty}f_k
(cz/de,c/dez;q)_\infty(bz,b/z;q)_k(czq^k,cq^k/z;q)_\infty
\nonumber\\
&\quad-K(c/de)\sum_{k=0}^{\infty}g_k
(bz,b/z;q)_\infty(cz/de,c/dez;q)_k
(c^2zq^k/bde,c^2q^k/bdez;q)_\infty.
\end{align}
\end{proposition}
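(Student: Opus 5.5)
I would prove the three assertions in the following order: first the explicit form, then holomorphy, then the grid zeros.

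\emph{Explicit form.} I multiply \eqref{eq:Delta-direct} by $M(z)$ term by term. The kernel term gives $M(z)F(z)=(cz/d,c/dz,cz/e,c/ez;q)_\infty$. For the first series, $M(z)A(z)=(cz,c/z;q)_\infty(cz/de,c/dez;q)_\infty$. Moreover
\[
(cz,c/z;q)_\infty\ph_k(z;b,c)=(bz,b/z;q)_k(czq^k,cq^k/z;q)_\infty ,
\]
which yields the second line of \eqref{eq:E-explicit}. Symmetrically, $M(z)B(z)=(bz,b/z;q)_\infty(c^2z/bde,c^2/bdez;q)_\infty$, and the factor $(c^2z/bde,c^2/bdez;q)_\infty$ combines with the denominator of $\ps_k$ to give the third line.

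\emph{Holomorphy.} Each summand in \eqref{eq:E-explicit} is a product of entire functions of $z\in\Cstar$. On compact subsets of $\Cstar$ the summands are uniformly bounded by $C|f_k|$ and $C|g_k|$, since the tails $(czq^k,cq^k/z;q)_\infty$ tend to $1$ uniformly. Because $f_k,g_k=\OO(q^k)$ by \cref{thm:kernel-coeff}, both sums converge locally uniformly on $\Cstar$, and so $E$ is holomorphic on $\Cstar$. Since the expression is symmetric under $z\mapsto 1/z$, this also covers neighbourhoods of $\infty$; the only possible accumulation point of singular behaviour is $0$.

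\emph{Grid zeros.} Take $z=bq^m$. In \eqref{eq:E-explicit} the third line vanishes through $(b/z;q)_\infty$. In the second line, $(b/z;q)_k=(q^{-m};q)_k=0$ for $k>m$, so only the finite sum $k\le m$ survives. The claim therefore reduces to the finite identity
\[
H(bq^m)=H(b)\sum_{k=0}^m f_k\ph_k(bq^m;b,c).
\]
Dividing by the nonvanishing factor $M(bq^m)A(bq^m)$ identifies this with the statement $H=T_m^{(b,c)}H$ at the grid point $bq^m$.

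To prove this grid identity I argue by triangularity rather than by summation. By \cref{thm:kernel-coeff} and \cref{prop:abstract-remainder}, the remainder $R_m^{(b,c)}H$ has $L_j(R_m^{(b,c)}H)=0$ for $0\le j\le m$. By \cref{cor:finite-grid-functional}, each $L_j$ is a combination of $h(b),\dots,h(bq^j)$ whose coefficient on $h(bq^j)$ is nonzero; this is the diagonal hypothesis. Induction on $j$ then gives $R_m^{(b,c)}H(bq^j)=0$ for $j\le m$, in particular at $j=m$.

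The grid $(c/de)q^m$ is handled identically. There the first two lines play the reverse roles: the second line vanishes via $(c/dez;q)_\infty$, and the identity reduces to finite Taylor interpolation of $K$ relative to $(c/de,c^2/bde)$. Alternatively, this case follows by applying the involution \cref{prop:involution}, which swaps the two lines of \eqref{eq:E-explicit} and fixes $M$.

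The main obstacle is the nonvanishing of the diagonal coefficient of $h(bq^j)$ in $L_j$. I would read it off from the $r=0$ term of \cref{thm:cooper-p0}. Under the hypotheses this coefficient is generically nonzero, which is exactly what the proposition assumes.
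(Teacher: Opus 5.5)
Your proof is correct. It uses the same three ingredients as the paper:
\begin{itemize}
\item the coefficient formula $t_k^{(b,c)}(H)=H(b)f_k$ from \cref{thm:kernel-coeff};
\item the vanishing of the complementary factor $(bz,b/z;q)_\infty$ on the grid $bq^m$;
\item triangular inversion of the grid functionals from \cref{cor:finite-grid-functional}.
\end{itemize}

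\textbf{How the two routes differ.} The paper first shows that \emph{all} Taylor coefficients of $\Delta/A$ relative to $(b,c)$ vanish, and only then inverts the triangular system to get $(\Delta/A)(bq^m)=0$. That first step requires two things: applying the functionals $L_j$ to the infinite $\ph$-series (coefficient identification), and invoking the flatness statement \cref{prop:complement-flat} for the complementary term.

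You instead evaluate $E$ at $bq^m$ directly. There the third line vanishes pointwise, and the second line terminates because $(q^{-m};q)_k=0$ for $k>m$. Everything then reduces to the finite interpolation statement $R_m^{(b,c)}H(bq^m)=0$, which is essentially the content the paper later records in \cref{lem:finite-truncated-flatness}.

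\textbf{What each buys.} Your version avoids any interchange of $L_j$ with an infinite sum. It also makes explicit where the pole-avoidance hypothesis enters, namely $M(bq^m)A(bq^m)\neq0$. The paper's formulation ties the grid zeros directly to the complementary-remainder picture of \cref{cor:remainder-limit}. Your normal-convergence argument for holomorphy on $\Cstar$ is also more explicit than the paper's one-line remark. You also correctly identified the diagonal coefficient as the $r=0$ term of the Cooper formula.

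\textbf{One small slip.} Symmetry under $z\mapsto 1/z$ does not make $\infty$ harmless. It transfers the behaviour at $\infty$ to that at $0$, so a priori both points are equally singular. This does not affect the result, since the statement only asks for holomorphy on $\Cstar$, and your argument gives that.
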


\begin{proof}
The formula \eqref{eq:E-explicit} follows by multiplying \eqref{eq:Delta-direct} by $M(z)$ and using
\[
\frac{(u;q)_\infty}{(u;q)_k}=(uq^k;q)_\infty.
\]
It also shows that all apparent poles from the basis denominators have been cleared.  Divide \eqref{eq:Delta-direct} by $A(z)$.  The first sum has Taylor coefficients $H(b)f_k$ relative to $(b,c)$, while \cref{prop:complement-flat} shows that the second term is flat for the same Taylor extraction.  Therefore all Taylor coefficients of $\Delta/A$ relative to $(b,c)$ vanish.  By the triangular finite-grid relation obtained from \cref{cor:finite-grid-functional}, and by the assumed nonvanishing of the diagonal coefficients, this implies $(\Delta/A)(bq^m)=0$ for all $m\ge0$.  Multiplication by $M(z)A(z)$ gives $E(bq^m)=0$.  The transformed grid follows in the same way after dividing by $B(z)$, or equivalently by applying the involution of \cref{prop:involution}.
\end{proof}

\begin{theorem}[direct uniqueness criterion]\label{thm:direct-uniqueness}
Assume the hypotheses of \cref{prop:pole-cleared-grid-zeros}.  If the pole-cleared residual $E$ is bounded in a punctured neighborhood of $0$, then $E$ has a removable singularity at $0$ and hence $E\equiv0$ on $\Cstar$.  Consequently the two-basis identity \eqref{eq:two-basis-identity} follows without an appeal to Bailey's nonterminating ${}_8\phi_7$ summation.
\end{theorem}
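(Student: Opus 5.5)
The plan is to combine Riemann's removable singularity theorem with the identity theorem, using the grid zeros \eqref{eq:E-grid-zeros} supplied by \cref{prop:pole-cleared-grid-zeros}.

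First I would confirm that $E$ is holomorphic on all of $\Cstar$, not merely on admissible compact sets. In the explicit form \eqref{eq:E-explicit}, every summand is a product of entire functions of $z\in\Cstar$. On a compact $L\subset\Cstar$, the finite products $(bz,b/z;q)_k$ and $(cz/de,c/dez;q)_k$ are bounded uniformly in $k$, because $\prod_j(1+|u|\,|q|^j)$ converges. The tails $(czq^k,cq^k/z;q)_\infty$ and $(c^2zq^k/bde,c^2q^k/bdez;q)_\infty$ are likewise uniformly bounded. Since $f_k,g_k=\OO(q^k)$ by \cref{thm:kernel-coeff}, the Weierstrass M-test gives locally uniform convergence on $\Cstar$. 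Hence $E$ is holomorphic on $\Cstar$, and the only possible isolated singularity in $\mathbb C$ is at $0$.

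Next, under the boundedness hypothesis, Riemann's theorem extends $E$ holomorphically to a neighbourhood of $0$, so $E$ is holomorphic on $\mathbb C$. By \eqref{eq:E-grid-zeros}, $E(bq^m)=0$ for all $m$ (outside finitely many exceptional collisions). Because $0<|q|<1$, these zeros $bq^m$ are distinct and accumulate at $0$, which is now an interior point of the domain of holomorphy. By continuity $E(0)=0$. The identity theorem on the connected domain $\mathbb C$ then forces $E\equiv0$, and in particular $E\equiv0$ on $\Cstar$. Only one of the two grids is needed here; the second grid is redundant but consistent. Excluding finitely many exceptional grid points does not affect the accumulation.

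Finally, I would return to $\Delta$. Since $E=M\Delta$ and $M(z)=(cz,c/z,c^2z/bde,c^2/bdez;q)_\infty$ is nonzero on every admissible compact set avoiding the pole sets, we get $\Delta\equiv0$ there. This is exactly \eqref{eq:two-basis-identity}. None of these steps invokes Bailey's ${}_8\phi_7$ summation: the only inputs are \cref{prop:pole-cleared-grid-zeros}, which rests on the finite Taylor calculus and \cref{prop:complement-flat}, and elementary complex analysis.

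The genuine obstacle is not this deduction but the hypothesis itself, namely boundedness of $E$ near $0$. As $z\to0$, the individual products in \eqref{eq:E-explicit}, such as $(c/dz;q)_\infty$, grow like theta functions. Boundedness therefore requires cancellation among the three terms on annuli $|z|\asymp|q|^N$, which the statement defers to the later annular estimate. For the theorem as stated, the proof reduces to the two routine steps above.
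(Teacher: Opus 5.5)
Your proof is correct and follows essentially the same route as the paper. Both arguments apply Riemann's removable-singularity theorem at $0$, use the accumulation of the grid zeros $bq^m$ at the now-interior point $0$ together with the identity theorem to get $E\equiv0$, and then recover $\Delta\equiv0$ wherever $M\neq0$. Your explicit Weierstrass M-test check that $E$ is holomorphic on all of $\Cstar$ is a welcome extra detail; the paper simply takes this from \cref{prop:pole-cleared-grid-zeros}.
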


\begin{proof}
Boundedness near $0$ gives a removable singularity by Riemann's theorem.  The zeros $bq^m$ accumulate at $0$, which is then an interior point of the removable extension.  The identity theorem gives $E=0$ in a neighborhood of $0$, and analytic continuation on the connected punctured plane gives $E\equiv0$ on $\Cstar$.  Since $M$ is not identically zero, this is equivalent to $\Delta\equiv0$ away from the harmless zero set of $M$, and hence everywhere by continuation.
\end{proof}

\begin{remark}[removable-puncture target]
The direct uniqueness criterion reduces a proof of the kernel theorem avoiding Bailey's nonterminating ${}_8\phi_7$ summation to a concrete removable-puncture estimate for the explicit expression \eqref{eq:E-explicit}.  The difficulty is substantive, because factors such as $(c/z;q)_\infty$ have severe growth near $z=0$.  The cancellation encoded by the two Taylor grids is therefore essential.  The annular estimates in \cref{sec:quadratic-and-annular} and the Taylor-layer reduction in \cref{sec:annular-closure} set up this route without relying on Bailey's nonterminating ${}_8\phi_7$ summation.
\end{remark}

\section{Annular criterion and the quadratic one-family example}\label{sec:quadratic-and-annular}

We record two further pieces of the basic, non-elliptic theory.  First, an elementary estimate for reciprocal $q$-products on shrinking annuli gives a convenient form of the removable-puncture condition in \cref{thm:direct-uniqueness}.  Second, the quadratic nonterminating product appearing in the second author's outlook \cite[Sec.~5]{Schlosser2008} is an ordinary one-family well-poised Taylor expansion with a vanishing tail remainder.

\begin{lemma}[annular estimate for a reciprocal $q$-product]\label{lem:q-annular-estimate}
Let $0<|q|<1$, let $\lambda\ne0$, and let $0<r<R<\infty$.  For $N\ge0$ and
\[
        z=\lambda q^N w,\qquad r\le |w|\le R,
\]
one has the exact factorization
\begin{equation}\label{eq:q-annular-factorization}
(\lambda/z;q)_\infty
=\left(-\frac{\lambda}{z}\right)^N q^{N(N-1)/2}
   (wq;q)_N(1/w;q)_\infty.
\end{equation}
Consequently there is a constant $C=C(q,r,R)$ such that
\begin{equation}\label{eq:q-annular-bound}
\left|\frac{(\lambda/z;q)_\infty}
{(-\lambda/z)^Nq^{N(N-1)/2}}\right|\le C
\end{equation}
uniformly for all $N\ge0$ and all $w$ in the closed annulus $r\le |w|\le R$, provided the annulus avoids the zeros of $(1/w;q)_\infty$.
\end{lemma}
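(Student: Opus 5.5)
The plan is to split the infinite product at index $N$ and evaluate each piece using $z=\lambda q^N w$. Then $\lambda/z=q^{-N}/w$, so
\[
(\lambda/z;q)_\infty=\prod_{j=0}^{N-1}\bigl(1-q^{j-N}/w\bigr)\cdot\prod_{j\ge N}\bigl(1-q^{j-N}/w\bigr).
\]
Reindexing the second product with $j\mapsto j+N$ gives $(1/w;q)_\infty$. For the first (finite) product, we factor each term as
\[
1-q^{j-N}/w=-\frac{q^{j-N}}{w}\bigl(1-wq^{N-j}\bigr).
\]
As $j$ runs over $0,\dots,N-1$, the exponent $N-j$ runs over $1,\dots,N$, so the factors $1-wq^{N-j}$ multiply to $(wq;q)_N$. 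The prefactors multiply to $(-1/w)^N q^{\sum_{j=0}^{N-1}(j-N)}$. Since $\sum_{j=0}^{N-1}(j-N)=N(N-1)/2-N^2$, this equals $(-1/w)^N q^{-N^2}q^{N(N-1)/2}$. Finally, $q^{-N}/w=\lambda/z$, so $(-1/w)^Nq^{-N^2}=(-\lambda/z)^N$. This yields \eqref{eq:q-annular-factorization} exactly. It is a purely finite bookkeeping step, and the only thing to check carefully is the exponent arithmetic.

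For the bound \eqref{eq:q-annular-bound}, the factorization reduces the quotient to $|(wq;q)_N|\,|(1/w;q)_\infty|$. The first factor is bounded uniformly in $N$ and in $|w|\le R$ by
\[
\prod_{j\ge1}\bigl(1+R|q|^j\bigr)=(-R|q|;|q|)_\infty<\infty.
\]
The second factor $(1/w;q)_\infty$ is an entire function of $1/w$, hence continuous on the compact annulus $r\le|w|\le R$ and therefore bounded there by some constant depending only on $q,r,R$. Taking $C$ to be the product of these two bounds gives \eqref{eq:q-annular-bound}.

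The hypothesis that the annulus avoids the zeros of $(1/w;q)_\infty$ is not needed for the upper bound. It would matter only for a matching lower bound, which is the form presumably used later for reciprocal products. I would note this as a remark: on a zero-free compact annulus, $|(1/w;q)_\infty|$ is bounded below by continuity and compactness. Also $|(wq;q)_N|$ is bounded below uniformly in $N$ provided $wq^j\ne1$ for all $j\ge1$. This holds if $R|q|<1$, or more generally if the annulus avoids $q^{-j}$; otherwise we restrict to the tail factors, which tend to $1$ uniformly. I do not expect a real obstacle; the main point requiring care is keeping the sign and $q$-power conventions consistent with the normalization $(-\lambda/z)^Nq^{N(N-1)/2}$ in the statement.
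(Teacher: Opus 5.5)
Your proof is correct and follows the paper's argument: split $(\lambda/z;q)_\infty$ after $N$ factors, rewrite the finite part as $(-\lambda/z)^N q^{N(N-1)/2}(wq;q)_N$, identify the tail as $(1/w;q)_\infty$, and bound both factors on the annulus (your explicit majorant $(-R|q|;|q|)_\infty$ just makes the paper's ``normal convergence'' step concrete). Your remark that the zero-avoidance hypothesis is needed only for a matching lower bound, not for the stated upper bound, is also correct.
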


\begin{proof}
Split the infinite product after $N$ factors:
\[
(\lambda/z;q)_\infty=(\lambda/z;q)_N(\lambda q^N/z;q)_\infty.
\]
Since $z=\lambda q^N w$,
\[
(\lambda/z;q)_N=\prod_{j=0}^{N-1}\left(1-\frac{q^{j-N}}{w}\right)
=\left(-\frac{\lambda}{z}\right)^Nq^{N(N-1)/2}(wq;q)_N.
\]
The remaining factor is $(\lambda q^N/z;q)_\infty=(1/w;q)_\infty$.  Normal convergence of the products on the chosen annulus gives the uniform bound.
\end{proof}

\begin{proposition}[annular removable-puncture criterion]\label{prop:annular-boundedness-target}
Let $E$ be the pole-cleared residual \eqref{eq:M-pole-clear}.  Suppose there are constants $0<r<R<\infty$ and $\lambda\ne0$ such that a punctured neighborhood of $0$ is covered, up to finitely many harmless collision points, by annuli
\[
        \mathcal A_N(\lambda;r,R)=\{z:\ r\le |z/(\lambda q^N)|\le R\},
        \qquad N\ge N_0,
\]
and suppose that
\begin{equation}\label{eq:annular-uniform-E}
        \sup_{N\ge N_0}\ \sup_{z\in\mathcal A_N(\lambda;r,R)} |E(z)|<\infty.
\end{equation}
Then $E$ is bounded in a punctured neighborhood of $0$.  Hence the direct uniqueness criterion of \cref{thm:direct-uniqueness} proves the two-basis kernel identity without using Bailey's nonterminating ${}_8\phi_7$ summation.
\end{proposition}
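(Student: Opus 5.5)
The plan is a direct covering argument followed by an appeal to \cref{thm:direct-uniqueness}. By hypothesis there is $\rho>0$ such that the punctured disc $\{0<|z|<\rho\}$ is contained in $\bigcup_{N\ge N_0}\mathcal A_N(\lambda;r,R)$, apart from finitely many collision points. Since $|z|\in[r|\lambda||q|^N,\,R|\lambda||q|^N]$ on $\mathcal A_N$, the annuli shrink geometrically to $0$. Consecutive annuli overlap, and so cover a full punctured neighbourhood, as soon as $R/r\ge|q|^{-1}$; I would note that this is the regime in which the covering hypothesis can hold, although the argument only uses the covering as stated.

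Set $S:=\sup_{N\ge N_0}\sup_{z\in\mathcal A_N}|E(z)|$, which is finite by \eqref{eq:annular-uniform-E}. Every point $z$ with $0<|z|<\rho$ that is not a collision point lies in some $\mathcal A_N$, and therefore $|E(z)|\le S$.

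The finitely many collision points are handled by continuity. By \cref{prop:pole-cleared-grid-zeros}, $E$ is holomorphic on $\Cstar$, so it is continuous at each exceptional point. Each such point is a limit of covered points, so the bound $|E|\le S$ extends to it. Shrinking $\rho$ if necessary, I could even exclude these points altogether, since there are only finitely many of them and none equals $0$. Either way, $E$ is bounded on $\{0<|z|<\rho\}$.

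With boundedness established, \cref{thm:direct-uniqueness} applies. Riemann's removable singularity theorem makes $0$ a removable point of $E$. The grid zeros $E(bq^m)=0$ accumulate at $0$, so the identity theorem gives $E\equiv0$, and hence $\Delta\equiv0$. This is the two-basis identity, obtained with no input from Bailey's summation.

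The only point needing care is the covering: one must make sure the union of the annuli really contains a full punctured neighbourhood and not only a sequence of rings separated by gaps. Here I would simply invoke the covering hypothesis as stated. All of the substantive analytic work lies in verifying \eqref{eq:annular-uniform-E}, which \cref{lem:q-annular-estimate} is designed to supply and which is not part of this proposition.
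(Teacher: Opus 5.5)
Your proposal is correct and follows the paper's argument. The covering hypothesis turns the uniform annular bound \eqref{eq:annular-uniform-E} directly into boundedness of $E$ on a punctured neighbourhood of $0$, after which \cref{thm:direct-uniqueness} applies; your overlap condition $R/r\ge|q|^{-1}$ and your continuity argument at the finitely many collision points make explicit what the paper's one-line proof leaves implicit.
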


\begin{proof}
The annuli cover a deleted neighborhood of the origin after increasing $N_0$ if necessary.  Thus \eqref{eq:annular-uniform-E} is exactly boundedness of $E$ near the puncture, and \cref{thm:direct-uniqueness} applies.
\end{proof}

We now turn to two quadratic one-family examples.  They are independent of Bailey's nonterminating ${}_8\phi_7$ summation in a stronger sense than the two-basis kernel: once the corresponding quadratic product identities are established from Bailey's $q$-analogues of Watson's summation, the Taylor-theoretic content is purely one-family.  The remainders are genuine tails and no complementary flat term has to be estimated.  Define
\begin{equation}\label{eq:quadratic-product}
        Q(z)=\frac{(azq,aq/z,b^2z/a,b^2/az;q^2)_\infty}{(bz,b/z;q)_\infty}.
\end{equation}
The expansion in the basis $(az,a/z;q)_k/(bz,b/z;q)_k$ is
\begin{align}\label{eq:quadratic-bailey}
Q(z)&=C_{a,b}\sum_{k=0}^{\infty}h_k\,
        \frac{(az,a/z;q)_k}{(bz,b/z;q)_k},
\end{align}
where
\begin{equation}\label{eq:quadratic-constant}
        C_{a,b}=\frac{(q,a^2q,b^2,b^2/a^2;q^2)_\infty}{(-ab,-b/a;q)_\infty}
\end{equation}
and
\begin{equation}\label{eq:quadratic-coeff}
        h_k=
\frac{1+abq^{2k-1}}{1+abq^{-1}}
\frac{(-abq^{-1},bq^{-1/2},-bq^{-1/2},-aq/b;q)_k}
     {(q,-aq^{1/2},aq^{1/2},b^2q^{-1};q)_k}
\left(\frac{b}{a}\right)^k.
\end{equation}
This is Bailey's $q$-analogue of Watson's ${}_3F_2$ summation in the form indicated in the second author's outlook \cite[Sec.~5]{Schlosser2008}; see, for instance, Gasper--Rahman \cite[Appendix (II.16)]{GasperRahman2004}.

\begin{theorem}[quadratic one-family Taylor expansion]\label{thm:quadratic-remainder}
Assume that the parameters are generic, that $|b/a|<1$, and that compact sets under consideration avoid the pole set $\{bq^m,q^m/b:m\ge0\}$.  Then the series in \eqref{eq:quadratic-bailey} converges locally uniformly.  Moreover it is the well-poised Taylor expansion of $Q$ relative to the pair $(a,b)$:
\begin{equation}\label{eq:quadratic-taylor-coeff}
        t_k^{(a,b)}(Q)=C_{a,b}h_k,
        \qquad k=0,1,2,\ldots.
\end{equation}
The exact Taylor remainders are the tails
\begin{equation}\label{eq:quadratic-remainder-tail}
        R_n^{(a,b)}Q(z)=C_{a,b}\sum_{k=n+1}^{\infty}h_k
        \frac{(az,a/z;q)_k}{(bz,b/z;q)_k},
\end{equation}
and they converge locally uniformly to $0$ on every admissible compact subset.
\end{theorem}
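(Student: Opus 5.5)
The plan is to reduce everything to \cref{thm:coeff-identification}, applied with the pair $(a,c)$ of that theorem replaced by $(a,b)$. The basis is then $\ph_k(z;a,b)=(az,a/z;q)_k/(bz,b/z;q)_k$, and its pole set is exactly $\{bq^m,q^m/b\}$. We may take the expansion \eqref{eq:quadratic-bailey} itself as given: it is Bailey's $q$-Watson summation in the kernel form recorded just before the statement. What has to be shown is therefore absolute summability of the coefficients, local uniform convergence, and the identification of coefficients and remainders.

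\textbf{Step 1: coefficient decay.} Write $h_k$ as a product of $q$-shifted factorial ratios. Each ratio $(\alpha;q)_k/(\beta;q)_k$ converges to $(\alpha;q)_\infty/(\beta;q)_\infty$ as $k\to\infty$, and this limit is finite under the generic assumption that none of $q$, $\pm aq^{1/2}$, $b^2q^{-1}$ is of the form $q^{-j}$. The very-well-poised factor $(1+abq^{2k-1})/(1+abq^{-1})$ tends to $1/(1+abq^{-1})$. Hence $|h_k|\le C|b/a|^k$ for some constant $C$, and the hypothesis $|b/a|<1$ gives $\sum_k|h_k|<\infty$.

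\textbf{Step 2: convergence and identification.} On a symmetric compact set $L$ avoiding the pole set, \cref{lem:basis-bounded} gives $\sup_{z\in L,k}|\ph_k(z;a,b)|<\infty$. By the Weierstrass M-test the series converges absolutely and locally uniformly. To apply \cref{thm:coeff-identification}, we need a symmetric pole-free domain $\Omega$ containing the grid $aq^m$. Generically the grid $\{aq^m\}$ avoids $\{bq^m,q^m/b\}$, and we can take $\Omega$ to be $\Cstar$ with small closed discs around the poles, and around $0$, removed. This set is symmetric and open, but it may fail to be connected near the accumulation points. That is harmless: \cref{thm:coeff-identification} only uses pointwise values on the grid together with the finite functionals $L_j$ of \cref{cor:finite-grid-functional}.

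The conclusion of \cref{thm:coeff-identification} then gives $t_k^{(a,b)}(Q)=C_{a,b}h_k$. It also shows that the Taylor sums are the partial sums, so $R_n^{(a,b)}Q$ equals the tail \eqref{eq:quadratic-remainder-tail}. The tail tends to $0$ uniformly on $L$, being bounded by $C'\sum_{k>n}|b/a|^k$.

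\textbf{Main obstacle.} The one delicate point is making sure that $Q$ is genuinely represented by the series on the whole grid, with no flat term of the kind in \cref{prop:flat-functions}. This is exactly what the external product identity \eqref{eq:quadratic-bailey} supplies, and no complementary series arises. I would check carefully that the specialization of Gasper--Rahman (II.16) matches $C_{a,b}$ and $h_k$ exactly. That check comes down to verifying that the identity holds at the grid points $z=aq^m$, where the series terminates. I would also confirm the decay rate $|b/a|^k$ against the argument $(b/a)^k$ appearing in \eqref{eq:quadratic-coeff}.
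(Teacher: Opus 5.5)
Your argument is the paper's proof step for step: $h_k=\OO((b/a)^k)$ from the finite limits of the $q$-shifted factorial ratios, \cref{lem:basis-bounded} plus the M-test for local uniform convergence, then \cref{thm:coeff-identification} for the coefficients and tails. The problem is the premise you take as given, as the paper's proof also does: the representation \eqref{eq:quadratic-bailey} is false as printed. The check you deferred already fails at $m=0$. At $z=a$ every basis term with $k\ge1$ vanishes through $(a/z;q)_k=(1;q)_k$, so the right-hand side reduces to $C_{a,b}h_0=C_{a,b}$. On the other hand,
\[
t_0^{(a,b)}(Q)=Q(a)=\frac{(q,a^2q,b^2,b^2/a^2;q^2)_\infty}{(ab,b/a;q)_\infty}
\ne\frac{(q,a^2q,b^2,b^2/a^2;q^2)_\infty}{(-ab,-b/a;q)_\infty}=C_{a,b}
\]
for generic $a,b$. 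So \eqref{eq:quadratic-taylor-coeff} fails already for $k=0$, and no argument can prove the statement as written.

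The cause is a lost sign. Take $\lambda=-ab/q$. Then $h_k$ is the very-well-poised coefficient with numerator parameters $bq^{-1/2},-bq^{-1/2},-aq/b$, whose partners $-aq^{1/2},aq^{1/2},b^2q^{-1}$ are exactly the denominator entries. But the partners of $az,a/z$ are $\lambda q/(az)=-b/z$ and $\lambda q/(a/z)=-bz$, not $b/z,bz$. Bailey's $q$-Watson summation in ${}_8W_7$ form therefore gives
\[
\frac{(azq,aq/z,b^2z/a,b^2/az;q^2)_\infty}{(-bz,-b/z;q)_\infty}
=C_{a,b}\sum_{k\ge0}h_k\,\frac{(az,a/z;q)_k}{(-bz,-b/z;q)_k}.
\]
Equivalently, keep the printed $Q$ and basis and replace $b$ by $-b$ in $C_{a,b}$ and $h_k$. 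You can confirm this at $z=a,aq,aq^2$. For instance, at $z=aq$ the $k=1$ term equals $-1=-h_0$, matching the zero that $(aq/z;q^2)_\infty$ produces on the left.

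With this correction your argument goes through verbatim. The Taylor pair becomes $(a,-b)$, or stays $(a,b)$ in the second form; the coefficients are $\OO(|b/a|^k)$ either way.

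There are two smaller slips. First, the poles of $(az,a/z;q)_k/(bz,b/z;q)_k$ are $bq^m$ and $q^{-m}/b$, not $q^m/b$; you copied this misprint from the statement. Second, you should not remove a disc around $0$ from $\Omega$. The grid $aq^m$ accumulates there, and $L_j$ needs $a,aq,\dots,aq^j$ for every $j$; such a removal also destroys symmetry unless a neighbourhood of $\infty$ is removed too. Simply take $\Omega$ to be $\Cstar$ minus the pole set. That set is closed and discrete in $\Cstar$, so $\Omega$ is open, connected and symmetric, and its compact subsets automatically stay away from $0$ and $\infty$.
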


\begin{proof}
For generic fixed parameters the finite ratios of $q$-shifted factorials in \eqref{eq:quadratic-coeff} tend to nonzero finite limits as $k\to\infty$.  Hence $h_k=\mathcal O((b/a)^k)$.  The hypothesis $|b/a|<1$ gives absolute summability.  The basis factors are locally bounded on pole-free compact sets by \cref{lem:basis-bounded}; hence the series converges locally uniformly.  Once the representation \eqref{eq:quadratic-bailey} is known, \cref{thm:coeff-identification} applies and gives both the coefficient formula and the tail remainder formula.
\end{proof}

A companion quadratic example is obtained from Bailey's second quadratic summation.  For generic parameters set
\begin{equation}\label{eq:quadratic-companion-product}
        Q^\diamond(z)=
        \frac{\left(\alpha d q^{1/2}z,\alpha d q^{1/2}/z,
        \alpha q^{3/2}z/d,\alpha q^{3/2}/dz;q^2\right)_\infty}
        {\left(-\alpha q^{1/2}z,-\alpha q^{1/2}/z;q\right)_\infty}.
\end{equation}
Bailey's companion quadratic summation gives
\begin{align}\label{eq:quadratic-companion-bailey}
Q^\diamond(z)&=C^\diamond_{\alpha,d}
\sum_{k=0}^{\infty}r_k
        \frac{\left(q^{1/2}z,q^{1/2}/z;q\right)_k}
             {\left(-\alpha q^{1/2}z,-\alpha q^{1/2}/z;q\right)_k},
\end{align}
where
\begin{equation}\label{eq:quadratic-companion-constant}
        C^\diamond_{\alpha,d}=
        \frac{(\alpha d,\alpha q/d;q)_\infty}{(-\alpha,-\alpha q;q)_\infty}
\end{equation}
and
\begin{equation}\label{eq:quadratic-companion-coeff}
        r_k=
\frac{1+\alpha q^{2k}}{1+\alpha}
\frac{(-\alpha,\alpha,-d,-q/d;q)_k}
     {(q,-q,\alpha q/d,\alpha d;q)_k}
\alpha^k.
\end{equation}
In very-well-poised notation, \eqref{eq:quadratic-companion-bailey} is the specialization
\[
{}_{8}W_{7}\left(-\alpha;q^{1/2}z,q^{1/2}/z,\alpha,-d,-q/d;q,\alpha\right),
\]
which is Gasper--Rahman \cite[Appendix (II.18)]{GasperRahman2004} after the substitution of the paired parameters
$q^{1/2}z$ and $q^{1/2}/z$.  These unilateral quadratic summations have recently been extended to bilateral summations by Cohl and Schlosser \cite[Cor.~3.18, Cor.~3.19]{CohlSchlosser2025}.

\begin{theorem}[companion quadratic one-family Taylor expansion]\label{thm:quadratic-companion-remainder}
Assume that the parameters are generic, that $|\alpha|<1$, and that compact sets under consideration avoid the pole set
\[
        \{-\alpha q^{m+1/2},\,-q^{m-1/2}/\alpha:m\ge0\}.
\]
Then the series in \eqref{eq:quadratic-companion-bailey} converges locally uniformly.  Moreover it is the well-poised Taylor expansion of $Q^\diamond$ relative to the pair $(q^{1/2},-\alpha q^{1/2})$:
\begin{equation}\label{eq:quadratic-companion-taylor-coeff}
        t_k^{(q^{1/2},-\alpha q^{1/2})}(Q^\diamond)=C^\diamond_{\alpha,d}r_k,
        \qquad k=0,1,2,\ldots.
\end{equation}
The exact Taylor remainders are the tails
\begin{equation}\label{eq:quadratic-companion-remainder-tail}
        R_n^{(q^{1/2},-\alpha q^{1/2})}Q^\diamond(z)
        =C^\diamond_{\alpha,d}\sum_{k=n+1}^{\infty}r_k
        \frac{\left(q^{1/2}z,q^{1/2}/z;q\right)_k}
             {\left(-\alpha q^{1/2}z,-\alpha q^{1/2}/z;q\right)_k},
\end{equation}
and they converge locally uniformly to $0$ on every admissible compact subset.
\end{theorem}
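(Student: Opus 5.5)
The plan mirrors the proof of \cref{thm:quadratic-remainder}.

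\emph{Step 1: coefficient decay.} For generic $\alpha,d$ the ratio
\[
\frac{(-\alpha,\alpha,-d,-q/d;q)_k}{(q,-q,\alpha q/d,\alpha d;q)_k}
\]
tends to the finite nonzero limit
\[
\frac{(-\alpha,\alpha,-d,-q/d;q)_\infty}{(q,-q,\alpha q/d,\alpha d;q)_\infty}
\]
as $k\to\infty$. The very-well-poised factor $(1+\alpha q^{2k})/(1+\alpha)$ tends to $1/(1+\alpha)$. Hence $r_k=\OO(\alpha^k)$, and the hypothesis $|\alpha|<1$ gives $\sum_k|r_k|<\infty$.

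\emph{Step 2: local uniform convergence.} The basis in \eqref{eq:quadratic-companion-bailey} is $\ph_k(z;q^{1/2},-\alpha q^{1/2})$ in the notation \eqref{eq:Phi-def}. Its pole set is $\{-\alpha q^{m+1/2},-q^{m-1/2}/\alpha\}$, which is exactly the set excluded in the statement. \cref{lem:basis-bounded} bounds these basis functions uniformly on admissible compact sets. Combined with Step 1, the Weierstrass M-test gives locally uniform convergence of \eqref{eq:quadratic-companion-bailey}.

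\emph{Step 3: the representation itself.} I take \eqref{eq:quadratic-companion-bailey} as given: the paper presents it as a specialization of Bailey's quadratic summation (Gasper--Rahman (II.18)). To confirm it, I would check that substituting $b\to q^{1/2}z$, $c\to q^{1/2}/z$ turns the product side of (II.18) into $Q^\diamond(z)/C^\diamond_{\alpha,d}$. The check is pure bookkeeping: split the base-$q$ products into base-$q^2$ pieces, then cancel the factors involving $z$ against the denominator $(-\alpha q^{1/2}z,-\alpha q^{1/2}/z;q)_\infty$.

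The identity holds pointwise for $|\alpha|<1$ wherever both sides are defined. It extends to the whole admissible domain by analytic continuation in $z$, because Step 2 makes both sides meromorphic in $z$ with the same poles.

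\emph{Step 4: apply \cref{thm:coeff-identification}.} The Taylor grid $q^{1/2}q^m$ must lie in a symmetric pole-free domain. For generic $\alpha$ it avoids the pole set: a collision $q^{m+1/2}=-\alpha q^{n+1/2}$ forces $\alpha=-q^{m-n}$, and $q^{m+1/2}=-q^{n-1/2}/\alpha$ forces $\alpha=-q^{n-m-1}$, both non-generic. \cref{thm:coeff-identification} then gives $t_k^{(q^{1/2},-\alpha q^{1/2})}(Q^\diamond)=C^\diamond_{\alpha,d}r_k$. It also identifies the Taylor remainders with the tails \eqref{eq:quadratic-companion-remainder-tail}, which tend to zero locally uniformly.

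\emph{Main obstacle.} Steps 1, 2 and 4 are routine. The only substantive point is the exact matching in Step 3 of the constant $C^\diamond_{\alpha,d}$ and of the argument $\alpha$ with the Gasper--Rahman normalization. The place to be careful is the base-$q^2$ splitting of the factors $(\alpha d q^{1/2}z;q^2)_\infty$ and their companions, so that the $z$-dependent products cancel exactly.
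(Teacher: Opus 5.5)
Your proposal is correct and follows the paper's proof step for step: $r_k=\OO(\alpha^k)$ because the $q$-shifted-factorial ratios converge, the basis $\ph_k(z;q^{1/2},-\alpha q^{1/2})$ is uniformly bounded by \cref{lem:basis-bounded}, the representation \eqref{eq:quadratic-companion-bailey} is taken from Bailey's companion quadratic summation, and \cref{thm:coeff-identification} then gives the coefficients and the tails. One harmless slip, which the paper's pole-set convention shares: the factor $(-\alpha q^{1/2}z;q)_k$ has its zeros, hence the basis has poles, at $-q^{-m-1/2}/\alpha$ rather than $-q^{m-1/2}/\alpha$, so the excluded set is not literally the pole set of the basis; but on the symmetric compact sets of Section~2, avoiding $-\alpha q^{m+1/2}$ already excludes these reciprocal poles.
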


\begin{proof}
For generic fixed parameters, the finite ratios of $q$-shifted factorials in \eqref{eq:quadratic-companion-coeff} tend to nonzero finite limits as $k\to\infty$.  Hence $r_k=\mathcal O(\alpha^k)$.  The hypothesis $|\alpha|<1$ gives absolute summability.  The basis factors are locally bounded on pole-free compact sets by \cref{lem:basis-bounded}; hence the series converges locally uniformly.  Once the representation \eqref{eq:quadratic-companion-bailey} is known, \cref{thm:coeff-identification} applies and gives both the coefficient formula and the tail remainder formula.
\end{proof}

\begin{remark}[why no quadratic two-family corollary is recorded]
One can still force a quadratic-looking two-family identity from \cref{thm:two-basis-kernel} by setting one numerator parameter equal to the negative of the other.  This only folds paired factors by
\[
        (x,-x;q)_\infty=(x^2;q^2)_\infty,
        \qquad (x,-x;q)_k=(x^2;q^2)_k,
\]
and therefore gives a specialization of Bailey's ordinary nonterminating ${}_8W_7$ summation.  It does not recover the quadratic summations above, whose unilateral ${}_8W_7$ specializations have different series arguments.  The genuinely new quadratic Taylor content is therefore better represented by the two one-family identities above.
\end{remark}

\begin{remark}[toward a genuine quadratic two-kernel theory]
A non-degenerate two-kernel quadratic theory should start from kernels modeled on the two quadratic summations \eqref{eq:quadratic-bailey} and \eqref{eq:quadratic-companion-bailey}, rather than from the linear two-basis kernel with a sign specialization.  Its annular closure should use terminating quadratic Watson-type summations, or companion Watson--Dixon type summations, in place of Rogers' ${}_6\phi_5$ summation.  This is the natural Taylor-theoretic setting in which one might recover non-degenerate unilateral, and eventually bilateral, quadratic identities.
\end{remark}

\section{Taylor-theoretic annular closure without Bailey's nonterminating \texorpdfstring{${}_8\phi_7$}{8phi7}}\label{sec:annular-closure}

This section completes the Taylor-theoretic route to the residual bound in \cref{prop:annular-boundedness-target}.  It identifies precisely what must be estimated in order to avoid Bailey's nonterminating ${}_8\phi_7$ summation, and then closes those estimates by a global profile identity proved from terminating summations.

For $N\ge0$ define the truncated residual
\begin{align}\label{eq:DeltaN}
\Delta_N(z)&=F(z)-A(z)H(b)\sum_{k=0}^{N}f_k\ph_k(z;b,c)
      -B(z)K(c/de)\sum_{k=0}^{N}g_k\ps_k(z;b,c,d,e),
\end{align}
and put
\begin{equation}\label{eq:EN}
        E_N(z)=M(z)\Delta_N(z),
        \qquad M(z)=(cz,c/z,c^2z/bde,c^2/bdez;q)_\infty.
\end{equation}

\begin{lemma}[finite Taylor flatness]\label{lem:finite-truncated-flatness}
For each $N\ge0$, the function $\Delta_N/A$ has its first $N+1$ Taylor coefficients relative to $(b,c)$ equal to zero.  Likewise, $\Delta_N/B$ has its first $N+1$ Taylor coefficients relative to $(c/de,c^2/bde)$ equal to zero.  Consequently,
\begin{equation}\label{eq:finite-grid-zeros}
        E_N(bq^m)=0,
        \qquad
        E_N((c/de)q^m)=0,
        \qquad 0\le m\le N,
\end{equation}
provided the displayed grid points avoid accidental zero-pole collisions.
\end{lemma}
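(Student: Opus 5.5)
The plan is to work with $\Delta_N/A$ directly and use linearity of the Taylor coefficient functionals $t_j^{(b,c)}$, $0\le j\le N$. Dividing \eqref{eq:DeltaN} by $A$ gives
\[
\frac{\Delta_N}{A}=H-H(b)\sum_{k=0}^N f_k\ph_k(z;b,c)-\frac{B}{A}K(c/de)\sum_{k=0}^N g_k\ps_k(z;b,c,d,e).
\]
I will compute $t_j^{(b,c)}$ of each of the three pieces. For the first piece, \cref{thm:kernel-coeff} gives $t_j^{(b,c)}(H)=H(b)f_j$. For the second, which lies in $W_N(c)$, \cref{thm:finite-coeff} (equivalently the delta property \eqref{eq:delta-property}) gives the coefficient $H(b)f_j$ for $j\le N$. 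For the third, the factor $B(z)$ contains $(bz,b/z;q)_\infty$, which vanishes at every grid point $bq^m$. Each summand is a finite sum, so no convergence issue arises. Since by \cref{cor:finite-grid-functional} each $t_j^{(b,c)}$ is a finite combination of the values at $b,bq,\dots,bq^j$, \cref{prop:flat-functions} shows that this piece contributes zero, provided the other factors $K(c/de)g_k\ps_k/A$ are finite at these points; this is where the collision hypothesis enters. Subtracting, the first $N+1$ coefficients of $\Delta_N/A$ vanish.

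The statement about $\Delta_N/B$ relative to $(c/de,c^2/bde)$ will come from the involution, \cref{prop:involution}. It swaps $A\leftrightarrow B$, $H\leftrightarrow K$ and $\ph_k\leftrightarrow\ps_k$, and by \cref{thm:kernel-coeff} it also swaps $f_k\leftrightarrow g_k$ together with $H(b)\leftrightarrow K(c/de)$, since $K(c/de)$ is the zeroth coefficient of $K$. Hence $\Delta_N$ is invariant under $\mathcal I$, and the second claim is the first claim applied at the image parameters. Alternatively, I can simply repeat the argument, using that $A$ contains $(cz/de,c/dez;q)_\infty$.

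To get the grid zeros \eqref{eq:finite-grid-zeros}, I will invert the triangular relation. By \cref{cor:finite-grid-functional} and \eqref{eq:p0-cooper} evaluated at $z=bq^{j/2}$, $t_j^{(b,c)}(h)$ is a linear combination of $h(b),\dots,h(bq^j)$. The coefficient of $h(bq^j)$ is the $r=0$ term, and for generic parameters it is nonzero: it is essentially the product of the $C_{j,0}$ factor with the normalization in \eqref{eq:taylor-coeff-finite}. By induction on $m\le N$, the vanishing of $t_0,\dots,t_m$ therefore forces $(\Delta_N/A)(bq^m)=0$. Multiplying by $MA$, which is finite at the grid point because $A$ and $M$ are entire away from $0$ apart from the poles that $M$ cancels, gives $E_N(bq^m)=0$. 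The transformed grid is handled in the same way.

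The main obstacle is bookkeeping rather than analysis. I must check that the leading (diagonal) Cooper coefficient at the node $bq^{j}$ is genuinely nonzero, and that the values used are finite. In particular, at $z=bq^m$ the products $(c^2z/bde,\dots)$ in $A$'s denominator and in $\ps_k$ must be nonvanishing, and $B/A$ must be finite so that $0\cdot(\text{finite})=0$. I will make these checks explicit, absorbing them into the ``no accidental collision'' proviso.
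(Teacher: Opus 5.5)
Your proposal is correct and follows essentially the same route as the paper. The Taylor polynomial of $H$ cancels its first $N+1$ coefficients, the complementary term is flat because of the zero factor $(bz,b/z;q)_\infty$ in $B$, the involution handles the transformed grid, and the triangular grid relation of \cref{cor:finite-grid-functional} turns the vanishing coefficients into the zeros of $E_N$. Your explicit checks, that the diagonal ($r=0$) Cooper coefficient is nonzero and that $MA=(cz,c/z,cz/de,c/dez;q)_\infty$ is holomorphic on $\Cstar$, fill in details the paper absorbs into its genericity and no-collision proviso.
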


\begin{proof}
The first $N+1$ terms in the first sum of \eqref{eq:DeltaN} are exactly the Taylor polynomial of $H$ through order $N$.  The second summand is flat for the same Taylor extraction by the zero factor $(bz,b/z;q)_\infty$ in $B(z)$.  Hence the first $N+1$ Taylor coefficients of $\Delta_N/A$ vanish.  The transformed assertion follows by the involution.  The finite triangular grid relation from \cref{cor:finite-grid-functional} then gives the grid zeros after multiplication by $M$.
\end{proof}

Let
\begin{equation}\label{eq:PN}
        P_N(z)=(bz,b/z,cz/de,c/dez;q)_{N+1}.
\end{equation}

\begin{proposition}[finite two-grid factorization]\label{prop:finite-two-grid-factorization}
On every compact annulus avoiding the collision set, there is a holomorphic symmetric function $U_N$ such that
\begin{equation}\label{eq:EN-factor}
        E_N(z)=P_N(z)U_N(z).
\end{equation}
\end{proposition}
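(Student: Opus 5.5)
The plan is to show that $E_N$ is holomorphic on the annulus, symmetric under $z\mapsto z^{-1}$, and vanishes at every zero of $P_N$ in the annulus. Then $U_N:=E_N/P_N$ has only removable singularities, is holomorphic, and inherits symmetry from $E_N$ and $P_N$.

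\emph{Holomorphy and symmetry of $E_N$.} Multiply \eqref{eq:DeltaN} by $M(z)$, as in the proof of \cref{prop:pole-cleared-grid-zeros}. This gives an explicit truncated version of \eqref{eq:E-explicit}: a product $(cz/d,c/dz,cz/e,c/ez;q)_\infty$ minus finitely many terms of the form $(cz/de,c/dez;q)_\infty(bz,b/z;q)_k(czq^k,cq^k/z;q)_\infty$ and $(bz,b/z;q)_\infty(cz/de,c/dez;q)_k(c^2zq^k/bde,c^2q^k/bdez;q)_\infty$. Every factor is an entire function of $z$ on $\Cstar$, so $E_N$ is holomorphic on $\Cstar$. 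Each factor is invariant under $z\mapsto z^{-1}$, so $E_N$ is symmetric.

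\emph{Zeros of $P_N$.} Next I list the zeros of $P_N$. The factor $(bz,b/z;q)_{N+1}$ vanishes at $z=bq^m$ and $z=q^{-m}/b$ for $0\le m\le N$. The factor $(cz/de,c/dez;q)_{N+1}$ vanishes at $z=(c/de)q^m$ and $z=q^{-m}de/c$, again for $0\le m\le N$. By \cref{lem:finite-truncated-flatness}, $E_N$ vanishes at $bq^m$ and $(c/de)q^m$. Symmetry of $E_N$ then gives vanishing at the reciprocal points $q^{-m}/b$ and $q^{-m}de/c$. For generic parameters these $4(N+1)$ points are distinct and every zero of $P_N$ is simple. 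Away from the collision set, therefore, each zero of $P_N$ is matched by a zero of $E_N$ of at least the same order, and the quotient extends holomorphically. On a compact annulus avoiding the collision set only finitely many of these points lie inside, so the argument is purely local.

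\emph{Main obstacle.} The delicate step is turning the vanishing of Taylor data into pointwise zeros of $E_N$ itself, not only of $\Delta_N/A$. Three things must be controlled:
\begin{itemize}
\item the triangular relation from \cref{cor:finite-grid-functional} must have nonzero diagonal coefficients, so that vanishing of $t_0,\dots,t_N$ forces $(\Delta_N/A)(bq^m)=0$ for $m\le N$;
\item multiplying by $MA$ must not introduce poles at those points, since $MA$ is holomorphic with $A$ having no poles on the $(b,c)$ grid generically;
\item the $(c/de)$ grid must be handled identically via the involution of \cref{prop:involution}.
\end{itemize}
I would make the nondegeneracy precise by reading the diagonal coefficient of $L_m$ at $h(aq^m)$ off the $r=0$ term of \eqref{eq:p0-cooper}, and checking that it is nonzero when the grid avoids the collision set. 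The collision points are the places where double zeros of $P_N$ could occur, or where grid points meet poles of $A$, $B$, $H$ or $K$; these are excluded by hypothesis.
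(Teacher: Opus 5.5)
Your proposal is correct and follows essentially the paper's argument: grid zeros of $E_N$ from \cref{lem:finite-truncated-flatness}, reciprocal zeros by symmetry, and the $4(N+1)$ zeros of $P_N$ being simple and distinct for generic parameters, so that $E_N/P_N$ has only removable singularities. The one difference is that the paper also treats colliding grid points by perturbing the parameters and passing to the limit, whereas you exclude them, which the hypothesis that the annulus avoids the collision set allows; your explicit check that $E_N$ is holomorphic and symmetric on $\Cstar$ is a step the paper leaves implicit.
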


\begin{proof}
The factors $(b/z;q)_{N+1}$ and $(c/dez;q)_{N+1}$ vanish at the two finite Taylor grids in \eqref{eq:finite-grid-zeros}.  By symmetry, the factors $(bz;q)_{N+1}$ and $(cz/de;q)_{N+1}$ account for the reciprocal zeros.  For generic parameters these zeros are simple and distinct, and the quotient has removable singularities.  If some grid points collide, perturb the parameters slightly so that the grids become disjoint, prove the removable quotient in that generic situation, and then let the perturbation tend to zero.  Since the quotient depends meromorphically on the parameters, the removable-singularity conclusion persists at the limiting collision parameters, away from genuine denominator poles.
\end{proof}

\begin{proposition}[finite-layer reduction]\label{prop:finite-layer-reduction}
Fix an admissible annular covering $\mathcal A_N(\lambda;r,R)$.  If the quotients in \eqref{eq:EN-factor} satisfy
\begin{equation}\label{eq:finite-layer-bound}
        \sup_{N\ge N_0}\sup_{z\in\mathcal A_N(\lambda;r,R)} |P_N(z)U_N(z)|<\infty,
\end{equation}
and if the two omitted coefficient tails in $E-E_N$ are uniformly bounded on the same annuli, then the pole-cleared residual $E$ satisfies the annular bound \eqref{eq:annular-uniform-E}.  Hence \cref{thm:direct-uniqueness} gives a proof of the two-basis identity that avoids Bailey's nonterminating ${}_8\phi_7$ summation.
\end{proposition}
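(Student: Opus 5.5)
The claim is essentially a triangle-inequality statement, so I would prove it by splitting $E$ into the truncated residual and the two omitted tails. By \eqref{eq:M-pole-clear}, \eqref{eq:Delta-direct}, \eqref{eq:DeltaN} and \eqref{eq:EN},
\[
E(z)=E_N(z)-M(z)A(z)H(b)\sum_{k>N}f_k\ph_k(z;b,c)-M(z)B(z)K(c/de)\sum_{k>N}g_k\ps_k(z;b,c,d,e),
\]
and this identity holds for every $N$ and every $z$ where the sums converge. To estimate on the annulus $\mathcal A_N(\lambda;r,R)$, I choose the truncation index equal to the annulus index $N$. This coupling is the only real design choice. For $z\in\mathcal A_N(\lambda;r,R)$ I then write
\[
|E(z)|\le |P_N(z)U_N(z)|+T^{(1)}_N(z)+T^{(2)}_N(z),
\]
where $T^{(1)}_N,T^{(2)}_N$ denote the moduli of the two pole-cleared tails. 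These are exactly the $k>N$ parts of the second and third lines of \eqref{eq:E-explicit}.

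The first term is bounded uniformly in $N\ge N_0$ by hypothesis \eqref{eq:finite-layer-bound}, using the factorization \eqref{eq:EN-factor} of \cref{prop:finite-two-grid-factorization}. The tails are bounded uniformly on the same annuli by the second hypothesis. Adding the three bounds gives one constant independent of $N\ge N_0$ and of $z\in\mathcal A_N(\lambda;r,R)$, which is \eqref{eq:annular-uniform-E}. The finitely many collision points excluded in \cref{prop:annular-boundedness-target} do not matter. $E$ is holomorphic there, by \cref{prop:pole-cleared-grid-zeros}, so these points affect boundedness only on a compact set.

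With \eqref{eq:annular-uniform-E} established, \cref{prop:annular-boundedness-target} shows that $E$ is bounded in a punctured neighbourhood of $0$. \Cref{thm:direct-uniqueness} then gives $E\equiv0$, hence $\Delta\equiv0$, which is \eqref{eq:two-basis-identity}. No step uses Bailey's summation: the inputs are only the Taylor flatness of \cref{lem:finite-truncated-flatness}, the factorization, and the two assumed bounds.

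The only point needing care is making precise what ``the two omitted coefficient tails are uniformly bounded on the same annuli'' means. I will read it as uniform boundedness of $T^{(1)}_N$ and $T^{(2)}_N$ on $\mathcal A_N$, with the tail index and the annulus index matched as above. The one check is that the pole-cleared tails in \eqref{eq:E-explicit} converge absolutely on each annulus, so that the displayed decomposition of $E$ is legitimate there. This follows from $f_k,g_k=\OO(q^k)$ (\cref{thm:kernel-coeff}) together with local boundedness of the products $(czq^k,cq^k/z;q)_\infty$ and $(c^2zq^k/bde,c^2q^k/bdez;q)_\infty$ for fixed $z$. Once the tail bound is read this way, the remaining argument is a triangle inequality.
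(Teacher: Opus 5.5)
Your proposal is correct and follows the paper's proof. On the $N$th annulus you bound $E_N=P_NU_N$ by \eqref{eq:finite-layer-bound} and bound $E-E_N$ by the assumed tail bounds, matching the truncation index to the annulus index as the paper implicitly does, and then you invoke \cref{prop:annular-boundedness-target} and \cref{thm:direct-uniqueness}. Your explicit formula for $E-E_N$ and the absolute-convergence check are only added detail; in that check, note that the finite factors $(bz,b/z;q)_k$ and $(cz/de,c/dez;q)_k$ are also bounded in $k$ for fixed $z$.
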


\begin{proof}
By \eqref{eq:EN-factor}, the first hypothesis is a uniform bound for $E_N$.  The second hypothesis bounds $E-E_N$.  Therefore $E$ is uniformly bounded on the annular covering, and \cref{prop:annular-boundedness-target} applies.
\end{proof}

The well-poised Cooper formula \eqref{eq:p0-cooper} is the natural tool for proving \eqref{eq:finite-layer-bound}: it expresses each Taylor functional as a finite linear combination of grid values, and the triangularity of the finite Taylor extraction turns the first $N+1$ vanishing Taylor functionals into the finite grid factors in \eqref{eq:PN}.  The annular estimate \cref{lem:q-annular-estimate} then reduces the remaining analytic work to uniform bounds for the two absolute coefficient tails.

\subsection{Laurent-coefficient form of the annular estimate}

The preceding formulation is annular.  A slightly sharper way to state the same problem is to remove the annuli altogether and look at the principal part of the pole-cleared residual at the puncture.  This converts the annular estimate into explicit cancellation identities for Laurent coefficients.

For a function holomorphic on a punctured disc we write
\[
        [z^m]G(z)
\]
for the coefficient of $z^m$ in its Laurent expansion at $0$.

\begin{lemma}[Laurent criterion for the annular estimate]\label{lem:laurent-criterion}
Let $G$ be holomorphic in $0<|z|<\rho$.  The following are equivalent:
\begin{enumerate}[label=\textup{(\roman*)}]
\item $G$ is bounded in a punctured neighborhood of $0$;
\item $[z^{-n}]G(z)=0$ for all $n\ge 1$;
\item the principal part of $G$ at $0$ vanishes.
\end{enumerate}
Consequently the annular estimate for the pole-cleared residual $E$, proved without using Bailey's nonterminating ${}_8\phi_7$ summation, is equivalent to
\begin{equation}\label{eq:negative-coeff-target}
        [z^{-n}]E(z)=0,\qquad n=1,2,3,\ldots.
\end{equation}
\end{lemma}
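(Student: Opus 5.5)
The equivalences are the classical characterization of removable singularities through Laurent expansions. I will prove the cycle (i)$\Rightarrow$(ii)$\Rightarrow$(iii)$\Rightarrow$(i) and then apply it to $E$.

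For (i)$\Rightarrow$(ii), suppose $|G(z)|\le C$ for $0<|z|\le\rho_0<\rho$. For $n\ge1$ and every $0<s\le\rho_0$, the Cauchy integral for the Laurent coefficient gives
\[
[z^{-n}]G(z)=\frac{1}{2\pi i}\oint_{|z|=s}G(z)\,z^{n-1}\,dz .
\]
Hence $\bigl|[z^{-n}]G\bigr|\le C s^{n}$. Letting $s\to0$ shows that the coefficient vanishes.

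The step (ii)$\Leftrightarrow$(iii) is a matter of definition, because the principal part is $\sum_{n\ge1}[z^{-n}]G(z)\,z^{-n}$.

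For (iii)$\Rightarrow$(i), if the principal part vanishes then the Laurent series reduces to the power series $\sum_{m\ge0}[z^m]G(z)z^m$. This series converges on $|z|<\rho$ and defines a holomorphic function on the full disc. That function agrees with $G$ on the punctured disc and is continuous at $0$, so $G$ is bounded near $0$.

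For the consequence, we first check that $E$ satisfies the hypothesis of the lemma. By \cref{prop:pole-cleared-grid-zeros}, $E$ is holomorphic on $\Cstar$ away from finitely many harmless collision points. The explicit form \eqref{eq:E-explicit} consists of locally uniformly convergent sums of entire functions of $z^{\pm1}$; convergence uses $f_k,g_k=\OO(q^k)$ together with boundedness of the finite products. Therefore $E$ is holomorphic on some punctured disc $0<|z|<\rho$.

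We then combine the lemma with \cref{prop:annular-boundedness-target}. The annuli $\mathcal A_N(\lambda;r,R)$, $N\ge N_0$, cover a deleted neighbourhood of $0$. The uniform bound \eqref{eq:annular-uniform-E} is therefore exactly condition (i) for $G=E$, so it is equivalent to \eqref{eq:negative-coeff-target}.

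The only point requiring care is this identification of the annular bound with boundedness on a punctured disc. It needs the annuli to exhaust a neighbourhood of $0$, which holds as soon as $R/r\ge|q|^{-1}$. Otherwise the argument is routine.
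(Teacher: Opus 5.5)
Your proof is correct and takes essentially the same route as the paper, which simply cites the Laurent-coefficient form of Riemann's removable-singularity theorem. Your Cauchy estimate $|[z^{-n}]G|\le Cs^{n}$, and your remark that the annuli $\mathcal A_N(\lambda;r,R)$ exhaust a deleted neighbourhood of $0$ once $R/r\ge|q|^{-1}$, just make explicit what the paper leaves implicit. One minor correction: \cref{prop:pole-cleared-grid-zeros} says that $E$ is holomorphic on all of $\Cstar$, and so does your own locally uniform convergence argument for \eqref{eq:E-explicit}, so there are no collision points to exclude.
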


\begin{proof}
This is the standard removable-singularity theorem expressed in Laurent coefficients.  If $G$ is bounded near $0$, then the singularity is removable and the negative Laurent coefficients vanish.  Conversely, if all negative Laurent coefficients vanish, then the Laurent series is an ordinary power series in a neighborhood of $0$, so $G$ is bounded there.  Applying this to $G=E$ gives \eqref{eq:negative-coeff-target}.
\end{proof}

We next spell out the coefficient identities which are equivalent to \eqref{eq:negative-coeff-target}.  Define, for $n\ge0$,
\begin{equation}\label{eq:calP-def}
\mathcal P_n(\alpha,\beta,\gamma,\delta)
:=[z^{-n}](\alpha z,\beta/z,\gamma z,\delta/z;q)_\infty.
\end{equation}
Equivalently, by the $q$-binomial theorem in the form of Euler's expansions \cite[Sec.~1.3]{GasperRahman2004},
\begin{align}\label{eq:calP-quadruple}
\mathcal P_n(\alpha,\beta,\gamma,\delta)
&=\sum_{\substack{r,s,t,u\ge0\\ s+u-r-t=n}}
\frac{(-1)^{r+s+t+u}q^{\binom r2+\binom s2+\binom t2+\binom u2}
      \alpha^r\beta^s\gamma^t\delta^u}
     {(q;q)_r(q;q)_s(q;q)_t(q;q)_u}.
\end{align}
The sum is absolutely convergent for each fixed $n$ because of the quadratic powers of $q$.
For the two families occurring in the Taylor residual put
\begin{align}\label{eq:calP1-def}
\mathcal P^{(1)}_{n,k}
&:=[z^{-n}](cz/de,c/dez;q)_\infty(bz,b/z;q)_k(czq^k,cq^k/z;q)_\infty,\\
\mathcal P^{(2)}_{n,k}
&:=[z^{-n}](bz,b/z;q)_\infty(cz/de,c/dez;q)_k
        (c^2zq^k/bde,c^2q^k/bdez;q)_\infty.\label{eq:calP2-def}
\end{align}
For fixed $n$, the sequences $\mathcal P^{(1)}_{n,k}$ and $\mathcal P^{(2)}_{n,k}$ grow at most subexponentially in $k$; in fact they are bounded by $C_n |q|^{-\varepsilon k}$ for every fixed $\varepsilon>0$ after increasing $C_n$.  Since $f_k=\mathcal O(q^k)$ and $g_k=\mathcal O(q^k)$ for generic parameters, the coefficient sums below converge absolutely.

\begin{proposition}[coefficient form of the annular cancellation]\label{prop:coefficient-cancellation-target}
The removable-puncture estimate needed in \cref{thm:direct-uniqueness} is equivalent to the following explicit family of coefficient identities:
\begin{align}\label{eq:coefficient-cancellation}
\mathcal P_n(c/d,c/d,c/e,c/e)
&=H(b)\sum_{k=0}^{\infty} f_k\,\mathcal P^{(1)}_{n,k}
  +K(c/de)\sum_{k=0}^{\infty} g_k\,\mathcal P^{(2)}_{n,k},
\qquad n\ge1.
\end{align}
Here $f_k$ and $g_k$ are the Taylor coefficients in \cref{thm:kernel-coeff}.
\end{proposition}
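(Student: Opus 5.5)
We will obtain the statement by chaining \cref{lem:laurent-criterion} with a termwise computation of the negative Laurent coefficients of the explicit pole-cleared residual \eqref{eq:E-explicit}. By \cref{thm:direct-uniqueness} and \cref{prop:annular-boundedness-target}, the required removable-puncture estimate is boundedness of $E$ near $0$. By \cref{lem:laurent-criterion} with $G=E$, this is equivalent to $[z^{-n}]E(z)=0$ for all $n\ge1$. It therefore suffices to show that, for each $n\ge1$, the identity $[z^{-n}]E=0$ is literally \eqref{eq:coefficient-cancellation}.

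The first term of \eqref{eq:E-explicit} is $(cz/d,c/dz,cz/e,c/ez;q)_\infty$. Reordering the factors as $(\alpha z,\beta/z,\gamma z,\delta/z;q)_\infty$ with $\alpha=\beta=c/d$ and $\gamma=\delta=c/e$, its $z^{-n}$ coefficient is $\mathcal P_n(c/d,c/d,c/e,c/e)$ by definition \eqref{eq:calP-def}. Its quadruple-sum form \eqref{eq:calP-quadruple} follows by multiplying the four Euler expansions $(x;q)_\infty=\sum_r(-1)^rq^{\binom r2}x^r/(q;q)_r$. These converge absolutely on $\Cstar$, so the Cauchy products and the resulting Laurent series are legitimate.

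The $k$th summands of the two series in \eqref{eq:E-explicit} are, by definition, the functions whose $z^{-n}$ coefficients are $\mathcal P^{(1)}_{n,k}$ and $\mathcal P^{(2)}_{n,k}$ in \eqref{eq:calP1-def}--\eqref{eq:calP2-def}. The only analytic point is interchanging $[z^{-n}]$ with $\sum_k$. We use $[z^{-n}]G=\frac1{2\pi i}\oint_{|z|=\rho}G(z)z^{n-1}\,dz$ on a fixed circle that avoids the (now cleared) pole sets. On that circle each summand is bounded by $C|f_k|$, respectively $C|g_k|$. This holds because $(bz,b/z;q)_k$ and $(czq^k,cq^k/z;q)_\infty$ are uniformly bounded in $k$ on a compact set, and similarly for the second family. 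Since $f_k,g_k=\OO(q^k)$ by \cref{thm:kernel-coeff}, the series converge uniformly on the circle, so the coefficient extraction passes inside the sum. The same contour bound gives $|\mathcal P^{(i)}_{n,k}|\le C_n\rho^{n}$ uniformly in $k$. This is stronger than the subexponential bound quoted before the statement and yields absolute convergence of the right-hand sides of \eqref{eq:coefficient-cancellation}. Collecting terms gives
\[
[z^{-n}]E(z)=\mathcal P_n(c/d,c/d,c/e,c/e)-H(b)\sum_k f_k\mathcal P^{(1)}_{n,k}-K(c/de)\sum_k g_k\mathcal P^{(2)}_{n,k}.
\]
Vanishing of this for all $n\ge1$ is therefore exactly \eqref{eq:coefficient-cancellation}, which proves the equivalence.

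The main subtlety is bookkeeping rather than depth. First, $E$ must be holomorphic on all of $0<|z|<\rho$, so that a single Laurent expansion at $0$ exists; this is guaranteed by the pole-clearing in \cref{prop:pole-cleared-grid-zeros}. Second, the uniformity in $k$ of the contour bounds must be verified. The positive-index coefficients impose no condition, and the grid-zero information is not needed for the equivalence. That information is needed only later, when one actually proves \eqref{eq:coefficient-cancellation}.
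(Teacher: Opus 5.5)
Your proposal is correct and follows the paper's proof: both reduce the removable-puncture estimate to $[z^{-n}]E=0$ for $n\ge1$ via \cref{lem:laurent-criterion}, then read off the $z^{-n}$ coefficient of \eqref{eq:E-explicit} termwise, justified by Cauchy's coefficient formula on a circle and the uniform convergence coming from $f_k,g_k=\OO(q^k)$. Your bound on $\mathcal P^{(i)}_{n,k}$, uniform in $k$, is valid and slightly sharper than the subexponential bound the paper quotes, but it plays the same role in the argument.
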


\begin{proof}
By \cref{lem:laurent-criterion}, the desired estimate is equivalent to $[z^{-n}]E(z)=0$ for all $n\ge1$.  Taking the coefficient of $z^{-n}$ in the explicit expression \eqref{eq:E-explicit} gives exactly \eqref{eq:coefficient-cancellation}.  The termwise coefficient extraction is justified by local uniform convergence of the Taylor series on every compact annulus, together with Cauchy's coefficient formula on circles contained in that annulus.  The absolute convergence noted above allows the coefficient functional to pass through the $k$-sums.
\end{proof}

\begin{remark}[role of the coefficient formulation]
The annular estimate \eqref{eq:annular-uniform-E} is a global growth assertion, and separate estimates of the three pieces in \eqref{eq:E-explicit} lose the cancellation.  \Cref{prop:coefficient-cancellation-target} isolates the needed cancellation coefficient by coefficient: the identities \eqref{eq:coefficient-cancellation} are exactly the vanishing of the principal part of the residual at $0$.
\end{remark}

\begin{proposition}[finite-principal-part reduction]\label{prop:finite-principal-part-reduction}
Fix $N\ge0$.  Let $E_N$ be the truncated pole-cleared residual in \eqref{eq:EN}.  Then $E_N$ is bounded on a punctured neighborhood of $0$ if and only if
\begin{equation}\label{eq:finite-principal-part-target}
        [z^{-n}]E_N(z)=0\qquad (n\ge1).
\end{equation}
Moreover, for each fixed $L\ge1$, the finite set of identities
\begin{equation}\label{eq:finite-principal-part-window}
        [z^{-n}]E_N(z)=0\qquad (1\le n\le L)
\end{equation}
depends only on the first $N+1$ Taylor layers and can be checked by finitely many applications of the well-poised Cooper formula.
\end{proposition}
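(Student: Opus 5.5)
The first assertion is the Laurent criterion \cref{lem:laurent-criterion} applied to $G=E_N$, once we know $E_N$ is holomorphic on a punctured disc $0<|z|<\rho$. I would check this from the explicit pole-cleared form. Multiplying \eqref{eq:DeltaN} by $M(z)$ and using $(u;q)_\infty/(u;q)_k=(uq^k;q)_\infty$ exactly as in the proof of \cref{prop:pole-cleared-grid-zeros} gives
\begin{align*}
E_N(z)&=(cz/d,c/dz,cz/e,c/ez;q)_\infty
-H(b)\sum_{k=0}^{N}f_k(cz/de,c/dez;q)_\infty(bz,b/z;q)_k(czq^k,cq^k/z;q)_\infty\\
&\quad-K(c/de)\sum_{k=0}^{N}g_k(bz,b/z;q)_\infty(cz/de,c/dez;q)_k(c^2zq^k/bde,c^2q^k/bdez;q)_\infty .
\end{align*}
This is a finite combination of products of entire functions of $z$ and $1/z$. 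It is therefore holomorphic on all of $\Cstar$, and \cref{lem:laurent-criterion} gives the equivalence of boundedness with \eqref{eq:finite-principal-part-target}.

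For the window statement, I would take $[z^{-n}]$ termwise in the display above. No convergence issue arises here, since only $N+1$ terms occur in each sum. The result is
\[
[z^{-n}]E_N=\mathcal P_n(c/d,c/d,c/e,c/e)-H(b)\sum_{k=0}^{N}f_k\mathcal P^{(1)}_{n,k}-K(c/de)\sum_{k=0}^{N}g_k\mathcal P^{(2)}_{n,k}.
\]
The only data of the kernel entering this are the coefficients $t_k^{(b,c)}(H)=H(b)f_k$ and $t_k^{(c/de,c^2/bde)}(K)=K(c/de)g_k$ for $0\le k\le N$. These are precisely the first $N+1$ Taylor layers of the two normalized kernels.

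By \cref{cor:finite-grid-functional}, and explicitly by the well-poised Cooper formula \eqref{eq:p0-cooper} evaluated at $z=bq^{k/2}$, each such coefficient is a finite linear combination of the grid values $H(b),\dots,H(bq^k)$. Likewise each $g_k$-layer is a finite combination of $K((c/de)q^j)$ for $j\le k$. So the $L$ identities \eqref{eq:finite-principal-part-window} involve only finitely many Cooper evaluations, namely $2(N+1)$ of them, and finitely many of the absolutely convergent Laurent coefficients \eqref{eq:calP-quadruple}, \eqref{eq:calP1-def} and \eqref{eq:calP2-def}. This gives the finite checkability.

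The main obstacle is making "depends only on the first $N+1$ layers" precise. The full kernel $F$ also enters through the term $\mathcal P_n(c/d,c/d,c/e,c/e)$. The point to stress is that this term does not depend on $N$ and is a fixed explicit quadruple sum. So the $N$-dependence, and all Taylor-theoretic input, is confined to the two finite layer sums, each of which is determined by the Cooper grid values. I would also note that the equivalence in the first part needs no genericity beyond the existence of the coefficients $f_k,g_k$ for $k\le N$, since $E_N$ has no poles at all.
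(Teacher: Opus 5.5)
Your proposal is correct and follows essentially the same route as the paper. The first assertion is \cref{lem:laurent-criterion} applied to $E_N$, and the window statement comes from extracting $[z^{-n}]$ termwise from the finitely many truncated terms, with the layers $H(b)f_k$ and $K(c/de)g_k$ ($k\le N$) expressed through grid values by \cref{cor:finite-grid-functional}, i.e.\ the well-poised Cooper formula. Your explicit pole-cleared form of $E_N$ shows that it is holomorphic on all of $\Cstar$, which the Laurent lemma needs; the paper leaves this detail implicit, but adding it does not change the argument.
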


\begin{proof}
The first assertion is again \cref{lem:laurent-criterion}.  For the second assertion, write the coefficient $[z^{-n}]E_N$ using the finite products in \eqref{eq:DeltaN} and the infinite-product expansion
\[
        (u;q)_\infty=\sum_{j=0}^{\infty}\frac{(-1)^j q^{\binom j2}u^j}{(q;q)_j}.
\]
For fixed $L$ only the coefficients $[z^{-1}],\ldots,[z^{-L}]$ are involved.  The negative powers that can enter from the finite basis products are bounded by the truncation level $N$, and the remaining infinite-product coefficients are read off from the displayed expansion.  The Taylor coefficients themselves are finite linear combinations of grid values by \cref{cor:finite-grid-functional}, which is the well-poised Cooper formula.  Hence each identity in the finite window is a finite algebraic consequence of the first $N+1$ Taylor layers together with the explicitly displayed infinite-product coefficients.
\end{proof}

\begin{remark}[Laurent-coefficient formulation of the annular route]
\Cref{prop:coefficient-cancellation-target,prop:finite-principal-part-reduction} sharpen the annular reduction.  They replace the phrase ``annular cancellation'' by a precise and checkable family of Laurent-coefficient cancellations.  Conversely, establishing those identities immediately gives the annular estimate by \cref{lem:laurent-criterion}.
\end{remark}

\subsection{Comparison with the Ismail--Stanton growth theory}

Ismail and Stanton proved two kinds of results which are closely related to the estimate above \cite{IsmailStanton2003,IsmailStanton2003JCAM}.  First, for functions analytic in bounded domains they obtained Cauchy-kernel representations in the Askey--Wilson basis.  Second, for entire functions they proved expansion and uniqueness theorems under a sharp $q$-exponential growth restriction.  In their notation, if
\[
        M(r;f)=\sup_{|x|\le r}|f(x)|,
\]
then the critical condition for interpolation on the grid associated with the basis $f_n(x;a)$ is
\begin{equation}\label{eq:IS-growth-condition}
        \limsup_{r\to\infty}\frac{\log M(r;f)}{(\log r)^2}<\frac{1}{2\log |q|^{-1}}.
\end{equation}
The flat function $f_\infty(x;a)$ lies at the corresponding boundary growth.  We now explain how this point of view fits the present residual problem.

Let $E(z)$ be the pole-cleared residual in \eqref{eq:E-explicit}.  If $E$ is bounded in a punctured neighborhood of $0$, then by symmetry it also has a removable singularity at infinity.  Hence it descends to an entire function of the Askey--Wilson variable
\[
        x=\frac12(z+z^{-1}),\qquad E(z)=\mathcal E(x).
\]
The zeros \eqref{eq:E-grid-zeros} become zeros of $\mathcal E$ on two Ismail--Stanton type interpolation grids,
\[
        x_m^{(b)}=\frac12(bq^m+b^{-1}q^{-m}),
        \qquad
        x_m^{(c/de)}=\frac12((c/de)q^m+(de/c)q^{-m}).
\]
Thus one grid already gives uniqueness if $\mathcal E$ satisfies \eqref{eq:IS-growth-condition}.  Conversely, proving \eqref{eq:IS-growth-condition} for $\mathcal E$ would be a natural Ismail--Stanton replacement for the boundedness assumption in \cref{prop:annular-boundedness-target}.

\begin{proposition}[Ismail--Stanton closure criterion]\label{prop:IS-closure-criterion}
Assume that the pole-cleared residual $E$ descends to an entire function $\mathcal E$ of $x=(z+z^{-1})/2$ and that
\begin{equation}\label{eq:E-subcritical-growth}
        \limsup_{r\to\infty}\frac{\log M(r;\mathcal E)}{(\log r)^2}
        <\frac{1}{2\log |q|^{-1}}.
\end{equation}
Then $E\equiv0$, and therefore the two-basis kernel identity \eqref{eq:two-basis-expanded} follows without appealing to Bailey's nonterminating ${}_8\phi_7$ summation.
\end{proposition}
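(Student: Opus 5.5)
The plan is to reduce to the Ismail--Stanton uniqueness theorem for entire functions of subcritical $q$-exponential growth that vanish on a single interpolation grid. By hypothesis $E$ descends to an entire function $\mathcal E$ of $x=\tfrac12(z+z^{-1})$. By \cref{prop:pole-cleared-grid-zeros}, $\mathcal E$ vanishes at every node $x_m^{(b)}=\tfrac12(bq^m+b^{-1}q^{-m})$, $m\ge0$, possibly excluding finitely many collision points. The growth condition \eqref{eq:E-subcritical-growth} is exactly the hypothesis \eqref{eq:IS-growth-condition} under which Ismail and Stanton \cite{IsmailStanton2003,IsmailStanton2003JCAM} prove that an entire function whose Askey--Wilson Taylor data on the grid $\{x_m^{(b)}\}$ vanish is identically zero. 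So the first step is to rewrite the grid-zero statement in their framework. In the basis $\ph_k(z;b,0)=(bz,b/z;q)_k$, the Taylor coefficients are finite linear combinations of the values at $bq^m$; this is \cref{cor:finite-grid-functional} at $c=0$, i.e.\ Cooper's formula. Hence all Askey--Wilson Taylor coefficients of $\mathcal E$ at $b$ vanish.

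The second step is to invoke, or if necessary re-prove, the uniqueness statement. I would argue directly with a Jensen-type count, since this avoids any subtlety about which form of the Ismail--Stanton theorem is cited. Suppose $\mathcal E\not\equiv0$. The zeros $x_m^{(b)}$ satisfy $|x_m^{(b)}|\asymp \tfrac12|b|^{-1}|q|^{-m}$. The counting function then obeys
\[
n(r)\ge \frac{\log r}{\log|q|^{-1}}-O(1),
\]
which gives
\[
\int_0^r \frac{n(t)}{t}\,dt\ \ge\ \frac{(\log r)^2}{2\log|q|^{-1}}-O(\log r).
\]
Combined with Jensen's formula, this yields
\[
\log M(r;\mathcal E)\ge \frac{(\log r)^2}{2\log|q|^{-1}}-O(\log r),
\]
and so
\[
\limsup_{r\to\infty}\frac{\log M(r;\mathcal E)}{(\log r)^2}\ge\frac{1}{2\log|q|^{-1}}.
\]
This contradicts \eqref{eq:E-subcritical-growth}, so $\mathcal E\equiv0$ and hence $E\equiv0$. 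Any finitely many excluded collision points change $n(r)$ only by a bounded amount, and that is absorbed in the $O(\log r)$ error. If $\mathcal E(0)=0$, I would divide by a power of $x$ first, as is standard in Jensen's formula.

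Finally, $E=M\Delta$ with $M\not\equiv0$, so $\Delta\equiv0$ off the discrete zero set of $M$, and then everywhere by continuation, exactly as in the proof of \cref{thm:direct-uniqueness}. This is \eqref{eq:two-basis-identity}, and substituting the explicit $A,B,\ph_k,\ps_k,f_k,g_k$ gives \eqref{eq:two-basis-expanded}. The only inputs are \cref{prop:pole-cleared-grid-zeros} and the growth hypothesis, so Bailey's nonterminating ${}_8\phi_7$ summation is never used.

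The main obstacle is the bookkeeping in the first step: confirming that the zeros of $E$ really are zeros of $\mathcal E$ at the distinct points $x_m^{(b)}$. The map $z\mapsto x$ identifies $bq^m$ with $b^{-1}q^{-m}$, and for generic $b$ the nodes are pairwise distinct, so each zero of $E$ gives a separate zero of $\mathcal E$. The Jensen count only needs the nodes to be distinct with geometric growth, and it does not depend on $c$. So the well-poised parameter plays no role in the closure step.
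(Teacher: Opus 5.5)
Your proposal is correct, but it handles the key uniqueness step differently from the paper.

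The paper's proof identifies $x_m^{(b)}$ with the Askey--Wilson interpolation grid, ``up to the harmless normalization of the square-root powers of $q$.'' It then cites the Ismail--Stanton uniqueness theorem for entire functions whose growth lies below $\frac{1}{2\log|q|^{-1}}$.

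You instead prove that uniqueness yourself with Jensen's formula. You use only two facts: the nodes $x_m^{(b)}$ are generically distinct, and $|x_m^{(b)}|\asymp \frac{1}{2|b|}|q|^{-m}$. These already force $\int_0^r n(t)\,dt/t\ge \frac{(\log r)^2}{2\log|q|^{-1}}-O(\log r)$, which contradicts the strict subcritical hypothesis.

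What your route buys:
\begin{itemize}
\item It is self-contained and works verbatim for complex $q$ with $0<|q|<1$ and complex generic $b$, so you never need to match the hypotheses and normalizations of the cited theorem.
\item It tolerates finitely many missing or colliding nodes.
\item It shows why the threshold is exactly the flat-function barrier: that constant is the Nevanlinna counting rate of a single $q$-geometric grid.
\end{itemize}

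Your first step, which converts the grid zeros into vanishing Askey--Wilson Taylor coefficients via Cooper's formula at $c=0$, would only be needed to invoke Ismail--Stanton in its Taylor-coefficient form. The Jensen argument uses the zeros alone, so you can drop that step.

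The paper's route is shorter and places the criterion inside the Ismail--Stanton expansion theory, which is how the surrounding discussion is framed.
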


\begin{proof}
The grid $x_m^{(b)}$ is exactly the interpolation grid used by the Askey--Wilson basis with parameter $b$, up to the harmless normalization of the square-root powers of $q$.  By the Ismail--Stanton uniqueness theorem for entire functions satisfying the subcritical growth bound \eqref{eq:IS-growth-condition}, an entire function satisfying \eqref{eq:E-subcritical-growth} is uniquely determined by its values on that grid.  Since $E(bq^m)=0$ for all $m\ge0$, the descended function $\mathcal E$ vanishes on the whole grid and hence must vanish identically.  Therefore $E\equiv0$.
\end{proof}

\begin{remark}[what this does and does not prove]
\Cref{prop:IS-closure-criterion} supplies a useful growth-theoretic target: instead of proving uniform boundedness on shrinking annuli directly, one may prove subcritical growth of the descended function in the Askey--Wilson variable.  In the variable $x$, the annular estimate is equivalent to showing that the pole-cleared residual has growth strictly below the flat-function barrier.  This is exactly the kind of boundary term controlled in Ismail and Stanton's contour proof: their Cauchy-kernel representation for bounded domains has a residual integral which disappears when the maximum modulus satisfies the subcritical $q$-exponential condition.  In the present two-basis problem, direct estimates of the three pieces of \eqref{eq:E-explicit} still exceed that barrier; only the special cancellation encoded in \eqref{eq:coefficient-cancellation} could yield \eqref{eq:E-subcritical-growth}.
\end{remark}

\begin{corollary}[equivalent growth version of the coefficient cancellations]\label{cor:growth-coeff-equivalence}
For the pole-cleared residual, the following two conditions are equivalent once $E$ is known to descend to a function of $x$:
\begin{enumerate}[label=\textup{(\roman*)}]
\item prove the negative Laurent-coefficient cancellations \eqref{eq:coefficient-cancellation};
\item prove that the descended function $\mathcal E$ is entire and satisfies a subcritical Ismail--Stanton growth bound of the form \eqref{eq:E-subcritical-growth}.
\end{enumerate}
Either condition closes the route avoiding Bailey's nonterminating ${}_8\phi_7$ summation.
\end{corollary}

\begin{proof}
The implication from \textup{(i)} to removability at $0$ is \cref{lem:laurent-criterion}.  Symmetry gives removability at infinity and hence an entire descended function of $x$.  The explicit cancellation then gives $E\equiv0$ by \cref{thm:direct-uniqueness}, so the growth bound is trivial.  Conversely, \textup{(ii)} gives $E\equiv0$ by \cref{prop:IS-closure-criterion}; taking negative Laurent coefficients gives \eqref{eq:coefficient-cancellation}.
\end{proof}

\subsection{Canonical two-grid factorization and quotient decay}

The Ismail--Stanton comparison shows that the annular estimate is a growth problem.  We now make the growth barrier more explicit by factoring out the canonical product forced by the two grids of zeros.  Put
\begin{equation}\label{eq:canonical-two-grid-product}
        Z_{b,c/de}(z):=(bz,b/z,cz/de,c/dez;q)_\infty.
\end{equation}
This is the symmetric product whose zeros are precisely the two Taylor grids and their reciprocal grids, up to accidental collisions.

\begin{proposition}[canonical two-grid factorization]\label{prop:canonical-two-grid-factorization}
Assume the generic noncollision hypotheses used in \cref{prop:pole-cleared-grid-zeros}.  Then the pole-cleared residual admits a factorization on $\Cstar$ of the form
\begin{equation}\label{eq:E-canonical-factorization}
        E(z)=Z_{b,c/de}(z)\,Q(z),
\end{equation}
where $Q$ is symmetric and holomorphic on $\Cstar$.
\end{proposition}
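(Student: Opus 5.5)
The plan is to define $Q:=E/Z_{b,c/de}$ on $\Cstar$ and show that all of its singularities are removable. Symmetry under $z\mapsto 1/z$ is then automatic, because $E$ and $Z_{b,c/de}$ are both symmetric. The first step is to check that $E$ is holomorphic on $\Cstar$. The explicit form \eqref{eq:E-explicit} exhibits $E$ as a sum of products of entire functions of $z$ and $z^{-1}$. Each $k$-sum converges locally uniformly on $\Cstar$: the factors $(czq^k,cq^k/z;q)_\infty$ and $(bz,b/z;q)_k$ are bounded on compact annuli uniformly in $k$ after the normalization used in \cref{lem:basis-bounded}, and $f_k,g_k=\OO(q^k)$ by \cref{thm:kernel-coeff}. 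So $E$ is holomorphic on $\Cstar$, and no pole set has to be avoided, since $M$ has already cleared all denominators.

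The second step is to locate the zeros of $Z_{b,c/de}$ and check that $E$ vanishes at each of them. Under the noncollision hypothesis, $Z_{b,c/de}$ has only simple zeros, at
\[
z=bq^m,\ b^{-1}q^{-m},\ (c/de)q^m,\ (de/c)q^{-m},\qquad m\ge0,
\]
and these four families are pairwise disjoint. \Cref{prop:pole-cleared-grid-zeros} gives $E(bq^m)=0$ and $E((c/de)q^m)=0$. The symmetry $E(z)=E(1/z)$ then gives the vanishing at the reciprocal points. At each simple zero $z_0$ of $Z_{b,c/de}$ we have $E(z_0)=0$, so $E/Z_{b,c/de}$ has a removable singularity at $z_0$. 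Away from this zero set, $Z_{b,c/de}$ is nonvanishing and $Q$ is holomorphic. Hence $Q$ extends holomorphically to all of $\Cstar$. The only accumulation points of the zero set are $0$ and $\infty$, which are not in $\Cstar$, so this is a purely local argument and needs no Weierstrass-type construction.

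The step that needs the most care is the justification of the grid zeros of $E$ itself, as opposed to those of the truncations $E_N$. \Cref{prop:pole-cleared-grid-zeros} derives them from the Taylor-flatness argument, and that argument requires passing the finite grid functionals through the infinite sums. This is legitimate because each functional $L_j$ in \cref{cor:finite-grid-functional} is a finite combination of point evaluations and the sums converge at the grid points. Alternatively, one can take the limit $N\to\infty$ in \cref{lem:finite-truncated-flatness}, since $E_N\to E$ pointwise on the grids. I would also point out that if the grids collide, the zeros become double, and the claim then needs either second-order vanishing or the parameter-perturbation argument from \cref{prop:finite-two-grid-factorization}. The statement assumes noncollision, so simple zeros suffice here.

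Finally, I would note that the factorization by itself carries no growth information. The content still to be proved is a bound on $Q$ near $0$ strong enough to compensate for the growth of $Z_{b,c/de}$.
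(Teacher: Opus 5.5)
Your proposal is correct and follows essentially the same route as the paper. In both, the grid zeros of $E$ come from \cref{prop:pole-cleared-grid-zeros} and the reciprocal zeros from the symmetry $z\mapsto z^{-1}$; the zeros of $Z_{b,c/de}$ are simple and non-colliding, so $Q=E/Z_{b,c/de}$ has only removable singularities on $\Cstar$ and is symmetric because numerator and denominator are. Your explicit check that $E$ is holomorphic on $\Cstar$ via \eqref{eq:E-explicit}, and your alternative justification of the grid zeros through $E_N\to E$ using \cref{lem:finite-truncated-flatness}, are sound details that the paper leaves implicit.
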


\begin{proof}
By \cref{prop:pole-cleared-grid-zeros}, $E$ vanishes at $bq^m$ and $(c/de)q^m$ for all $m\ge0$.  Since $E$ is symmetric under $z\mapsto z^{-1}$, it also vanishes at $q^{-m}/b$ and $de\,q^{-m}/c$.  These are exactly the zeros supplied by the four factors in \eqref{eq:canonical-two-grid-product}.  For generic parameters the zeros are simple and do not collide; in the exceptional removable cases the same conclusion follows by taking the quotient after cancelling common zero factors.  Thus $Q:=E/Z_{b,c/de}$ is holomorphic on $\Cstar$ and is symmetric because both numerator and denominator are symmetric.
\end{proof}

The preceding factorization shows why a direct proof is delicate.  The canonical two-grid product itself has critical quadratic annular growth.  Thus boundedness of $E$ is not a small consequence of the zeros; it is a statement that the quotient $Q$ decays fast enough to cancel the canonical product.

\begin{proposition}[quotient form of the annular estimate]\label{prop:quotient-annular-estimate}
Fix an admissible annular covering $\mathcal A_N(\lambda;r,R)$ as in \cref{prop:annular-boundedness-target}, avoiding the limiting zero sets of the products below.  For $z=\lambda q^Nw$ with $r\le |w|\le R$, one has
\begin{equation}\label{eq:canonical-product-annular-growth}
        Z_{b,c/de}(\lambda q^Nw)
        = C_{\lambda,N}(w)\,q^{-N(N+1)}\left(\frac{bc}{de\,\lambda^2w^2}\right)^N,
\end{equation}
where the functions $C_{\lambda,N}(w)$ are holomorphic and bounded above and below uniformly in $N$ on the chosen annulus after removing finitely many collision circles.  Consequently the desired annular bound \eqref{eq:annular-uniform-E} is equivalent to the quotient estimate
\begin{equation}\label{eq:Q-decay-target}
        |Q(\lambda q^Nw)|
        \le C\, |q|^{N(N+1)}
        \left|\frac{de\,\lambda^2w^2}{bc}\right|^N
\end{equation}
locally uniformly on the annulus, up to harmless constants depending only on the chosen annular covering.
\end{proposition}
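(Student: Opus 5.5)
The plan is to prove \eqref{eq:canonical-product-annular-growth} by applying the splitting of \cref{lem:q-annular-estimate} to the two factors of $Z_{b,c/de}$ that blow up at the puncture. The other two factors stay harmless. The equivalence with \eqref{eq:Q-decay-target} should then follow by dividing the factorization \eqref{eq:E-canonical-factorization} of \cref{prop:canonical-two-grid-factorization}.

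\emph{Step 1 (singular factors).} For $\beta\in\{b,c/de\}$, put $u_\beta:=\beta/(\lambda w)$, so that $\beta/z=u_\beta q^{-N}$ when $z=\lambda q^N w$. Splitting off the first $N$ factors gives
\[
(u_\beta q^{-N};q)_\infty=(u_\beta q^{-N};q)_N\,(u_\beta;q)_\infty
=(-u_\beta)^N q^{-N(N+1)/2}\,(q/u_\beta;q)_N\,(u_\beta;q)_\infty .
\]
This is the computation in the proof of \cref{lem:q-annular-estimate}, written with the normalization adapted to $\lambda$. Multiplying the two choices of $\beta$ produces exactly the main term $q^{-N(N+1)}\bigl(bc/(de\lambda^2w^2)\bigr)^N$ of \eqref{eq:canonical-product-annular-growth}. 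Everything left over goes into
\[
C_{\lambda,N}(w):=(b\lambda q^Nw,\ c\lambda q^Nw/de;q)_\infty\prod_{\beta}(q/u_\beta;q)_N\,(u_\beta;q)_\infty .
\]

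\emph{Step 2 (uniform two-sided bounds for $C_{\lambda,N}$).} On $r\le|w|\le R$ the factors $(b\lambda q^Nw;q)_\infty$ and $(c\lambda q^Nw/de;q)_\infty$ converge uniformly to $1$. Hence they are bounded above and below once $N\ge N_0$. The finite products $(q/u_\beta;q)_N$ converge uniformly to $(q/u_\beta;q)_\infty$, so their moduli are bounded above uniformly in $N$. They are bounded below away from the finitely many circles in the annulus where some factor $1-q^{j+1}/u_\beta$ vanishes, namely $w=\beta q^{-j-1}/\lambda$. Likewise $(u_\beta;q)_\infty$ is bounded above, and below away from its finitely many zeros $w=\beta q^m/\lambda$ in the annulus. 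None of these exceptional points depends on $N$. So after deleting small fixed neighbourhoods of them (the ``collision circles'') we obtain constants $0<c_1\le|C_{\lambda,N}(w)|\le c_2$ uniform in $N\ge N_0$. The main obstacle is this bookkeeping of the exceptional set. The point to check is that it is finite and independent of $N$; I expect it to be routine.

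\emph{Step 3 (equivalence).} On each annulus we have $E=Z_{b,c/de}Q$. By Steps 1 and 2, away from the deleted neighbourhoods, $|E|\le C$ holds if and only if
\[
|Q(\lambda q^Nw)|\le C\,c_1^{-1}|q|^{N(N+1)}\left|\frac{de\lambda^2w^2}{bc}\right|^N .
\]
Conversely, \eqref{eq:Q-decay-target} together with the upper bound $c_2$ gives $|E|\le Cc_2$. To recover the bounds on the deleted neighbourhoods, use that $E$ and $Q$ are holomorphic there; this holds because $Q$ is holomorphic on $\Cstar$ by \cref{prop:canonical-two-grid-factorization}. In the rescaled variable $w$ these neighbourhoods are fixed small discs, so the maximum modulus principle transfers the bound from their boundary circles to their interiors, uniformly in $N$. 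The $w$-dependent factor $|w|^{2N}$ on a small disc around $w_0$ is comparable to $|w_0|^{2N}$ only up to a factor $(1\pm\varepsilon)^{2N}$. This is why the equivalence is asserted only ``up to harmless constants depending on the covering''. I would handle it by applying the maximum principle to the holomorphic functions $Q(\lambda q^Nw)\,w^{-2N}$ and $E$ rather than to $|Q|$ directly. This yields \eqref{eq:annular-uniform-E} if and only if \eqref{eq:Q-decay-target}.
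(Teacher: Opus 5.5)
Your proposal is correct and follows essentially the paper's own proof. You use the same finite/infinite splitting of the two reciprocal factors $(b/z;q)_\infty$ and $(c/dez;q)_\infty$, which yields the identical $C_{\lambda,N}(w)$, and you obtain the equivalence from $E=Z_{b,c/de}Q$. Your maximum-modulus treatment of the small discs around the grid zeros, applied to $E$ and to $w^{-2N}Q(\lambda q^Nw)$, is a worthwhile refinement that the paper's proof leaves implicit: a covering annulus must have $R/r\ge|q|^{-1}$, so for large $N$ it necessarily contains zeros of $C_{\lambda,N}$.
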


\begin{proof}
Apply the splitting used in \cref{lem:q-annular-estimate} to the reciprocal factors.  For $\gamma\in\Cstar$ and $z=\lambda q^Nw$,
\[
(\gamma/z;q)_\infty
=(-\gamma/z)^N q^{N(N-1)/2}
\left(\frac{\lambda w q}{\gamma};q\right)_N
\left(\frac{\gamma}{\lambda w};q\right)_\infty.
\]
The factors $(\gamma z;q)_\infty$ tend normally to $1$ on the annulus.  Applying the displayed identity with $\gamma=b$ and $\gamma=c/de$ gives \eqref{eq:canonical-product-annular-growth}, with
\[
C_{\lambda,N}(w)=
(b\lambda q^Nw,c\lambda q^Nw/de;q)_\infty
\left(\frac{\lambda wq}{b},\frac{de\,\lambda wq}{c};q\right)_N
\left(\frac{b}{\lambda w},\frac{c}{de\,\lambda w};q\right)_\infty,
\]
more invariantly understood as the product of all normally convergent factors left after extracting the displayed monomial and power of $q$.  On annuli avoiding the zeros of the limiting products, these factors are uniformly bounded above and below.  Since $E=Z_{b,c/de}Q$, boundedness of $E$ is therefore equivalent to \eqref{eq:Q-decay-target}.
\end{proof}

\begin{corollary}[sharpened quotient form of the annular estimate]\label{cor:sharp-quotient-target}
A proof of the residual bound avoiding Bailey's nonterminating ${}_8\phi_7$ summation may be reduced to proving the quadratic annular decay \eqref{eq:Q-decay-target} for the quotient $Q$ in \eqref{eq:E-canonical-factorization}.  Equivalently, in the Ismail--Stanton formulation, it is necessary to show that after removing the canonical two-grid flat factor, the residual quotient lies below the two-grid critical growth barrier.
\end{corollary}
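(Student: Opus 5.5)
The proof combines \cref{prop:canonical-two-grid-factorization} with the annular asymptotics of \cref{prop:quotient-annular-estimate}, and is essentially bookkeeping. The reduction goes in three steps.

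\textbf{Step 1 (reduction).} By \cref{thm:direct-uniqueness} together with \cref{prop:annular-boundedness-target}, it suffices to establish the uniform annular bound \eqref{eq:annular-uniform-E} for the pole-cleared residual $E$. This proves the two-basis identity without invoking Bailey's nonterminating ${}_8\phi_7$ summation.

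\textbf{Step 2 (factorization).} Under the noncollision hypotheses of \cref{prop:pole-cleared-grid-zeros}, the two-grid zeros and the symmetry $z\mapsto z^{-1}$ give the factorization \eqref{eq:E-canonical-factorization}. Here $Q$ is holomorphic on $\Cstar$.

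\textbf{Step 3 (equivalence on annuli).} Write $z=\lambda q^N w$ with $r\le|w|\le R$, and split the two reciprocal factors of $Z_{b,c/de}$ as in \cref{lem:q-annular-estimate}. This yields \eqref{eq:canonical-product-annular-growth}. The factor $C_{\lambda,N}(w)$ is bounded above and below uniformly in $N$, provided the circles on which the limiting products $(b/\lambda w,\,c/de\lambda w;q)_\infty$ and $(\lambda w q/b,\,de\lambda wq/c;q)_\infty$ vanish are excluded. Consequently $|E|\le C$ on the annuli holds if and only if $|Q|$ satisfies \eqref{eq:Q-decay-target}. Combining with Step 1 gives the reduction claimed in the first sentence of the corollary.

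\textbf{The Ismail--Stanton reformulation.} For the equivalent formulation, pass to $x=(z+z^{-1})/2$. The product $Z_{b,c/de}$ is the product of the two flat functions $f_\infty(x;b)$ and $f_\infty(x;c/de)$. By the annular computation, its growth is $\exp\bigl((\log r)^2/\log|q|^{-1}\bigr)$ up to lower order, which is the two-grid critical barrier: twice the one-grid barrier in \eqref{eq:IS-growth-condition}. The decay \eqref{eq:Q-decay-target} says precisely that $Q$ offsets this growth, so that $E=Z_{b,c/de}Q$ stays bounded near the puncture and hence by symmetry near $\infty$. Conversely, boundedness forces \eqref{eq:Q-decay-target}. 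This is the stated necessity: after removing the flat factor, the quotient must lie below the barrier. The remark that the one-grid subcritical bound suffices, via \cref{prop:IS-closure-criterion}, provides the sufficiency direction.

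\textbf{Main obstacle.} The only delicate point is uniformity in $N$ of the two-sided bound on $C_{\lambda,N}$. The finite products $(\lambda wq/b;q)_N$ converge normally to infinite products, but they can approach zeros on finitely many circles. I would handle this by choosing $r,R$ so that the annulus avoids these circles. Alternatively, one can use two overlapping annular coverings with different $\lambda$ and invoke the maximum principle to bound $Q$ across the excluded circles.
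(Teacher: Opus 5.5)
Your proposal is correct and follows the paper's own argument: the paper combines \cref{prop:quotient-annular-estimate} with \cref{prop:annular-boundedness-target} and notes that $Z_{b,c/de}$, the product of the two flat factors, has the two-grid critical $(\log r)^2$ growth in the $x$-variable, which is exactly what your Steps 1--3 and your Ismail--Stanton paragraph spell out. One correction to your obstacle paragraph: a covering family forces $R/r\ge|q|^{-1}$, while the zero circles of the limiting products in $C_{\lambda,N}$ are $|z|\in|b|\,|q|^{\Z}\cup|c/de|\,|q|^{\Z}$ (independent of $\lambda$), so they cannot be avoided by choosing $r,R$ or $\lambda$, and only the maximum-principle argument works; this matters only for the necessity direction, since $Q$-decay $\Rightarrow$ bounded $E$ uses just the uniform upper bound on $C_{\lambda,N}$, which holds on the whole annulus.
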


\begin{proof}
This is \cref{prop:quotient-annular-estimate} together with \cref{prop:annular-boundedness-target}.  In the $x$-variable, the product \eqref{eq:canonical-two-grid-product} has precisely the critical quadratic logarithmic growth associated with two independent $q$-geometric interpolation grids.  Thus the quotient estimate is the exact extra cancellation that is not supplied by the qualitative zero information.
\end{proof}

\subsection{Exponential profiles after division by the canonical flat factor}

The quotient formulation above still asks for a quadratic decay estimate for $Q$.  A natural next attempt is to estimate the three terms of $Q$ after dividing \eqref{eq:E-explicit} by the canonical factor $Z_{b,c/de}$.  The following proposition records what this attempt actually gives.  It is useful because it shows that ordinary absolute estimates remove the quadratic growth, but only down to an exponential scale.  The desired estimate therefore requires cancellation of the resulting exponential profiles, not just sharper bounds on individual products.

For compactness write
\begin{align}\label{eq:Q-decomposition}
Q(z)&=Q_0(z)-H(b)\sum_{k=0}^\infty f_k R_k(z)-K(c/de)\sum_{k=0}^\infty g_k S_k(z),
\end{align}
where
\begin{align*}
Q_0(z)&=\frac{(cz/d,c/dz,cz/e,c/ez;q)_\infty}
{(bz,b/z,cz/de,c/dez;q)_\infty},\\
R_k(z)&=\frac{(czq^k,cq^k/z;q)_\infty}{(bzq^k,bq^k/z;q)_\infty},\\
S_k(z)&=\frac{(c^2zq^k/bde,c^2q^k/bdez;q)_\infty}
{(czq^k/de,cq^k/dez;q)_\infty}.
\end{align*}
These formulas are obtained from \eqref{eq:E-explicit} by cancelling the factors in $Z_{b,c/de}$.

\begin{proposition}[exponential profile of the residual quotient]\label{prop:exponential-profile}
Fix an admissible annulus $z=\lambda q^Nw$, $r\le |w|\le R$, avoiding the zero sets of the limiting products below.  For each fixed $L\ge0$, after subtracting the first $L$ Taylor layers in the two series in \eqref{eq:Q-decomposition}, the remaining tails are bounded by
\begin{equation}\label{eq:profile-tail-bound}
        C_L\,\left|\frac{c}{b}\right|^N\eta_L+C\,(|q|^N+|q|^L)
\end{equation}
with constants independent of $N$ and with $\eta_L\to0$ as $L\to\infty$, provided the coefficient sequences are absolutely summable after multiplication by a fixed exponential factor depending on the annulus.  In particular, absolute annular estimates yield at best an exponential profile
\begin{equation}\label{eq:Q-exponential-profile}
        Q(\lambda q^Nw)=\left(\frac{c}{b}\right)^N\mathcal Q_N(w)+o\left(\left|\frac{c}{b}\right|^N\right),
\end{equation}
where the functions $\mathcal Q_N$ are locally bounded on the annulus.
Moreover, for every fixed $k$ the normalized factors have locally uniform limits
\begin{align}\label{eq:R-profile-limit}
\left(\frac{b}{c}\right)^{N-k}R_k(\lambda q^Nw)&\longrightarrow
\mathcal R_k(w),\\
\left(\frac{b}{c}\right)^{N-k}S_k(\lambda q^Nw)&\longrightarrow
\mathcal S_k(w),\label{eq:S-profile-limit}
\end{align}
where $\mathcal R_k$ and $\mathcal S_k$ are explicit quotients of normally convergent $q$-products.  The same statement holds for $Q_0$ with normalization $(b/c)^N$.
\end{proposition}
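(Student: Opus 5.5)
The plan is to work factor by factor with the splitting identity from \cref{lem:q-annular-estimate}. Fix $z=\lambda q^Nw$ with $r\le|w|\le R$. For every reciprocal factor $(\gamma/z;q)_\infty$ that appears in $Q_0$, $R_k$ or $S_k$, I write
\[
(\gamma/z;q)_\infty=(-\gamma/z)^Nq^{N(N-1)/2}\,(\lambda wq/\gamma;q)_N\,(\gamma/\lambda w;q)_\infty .
\]
The direct factors $(\gamma z;q)_\infty$ tend normally to $1$ on the annulus. Each quotient in \eqref{eq:Q-decomposition} has the same number of reciprocal factors upstairs and downstairs, so the quadratic powers $q^{N(N-1)/2}$ and the powers of $z$ cancel. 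Only a ratio of the products of the $\gamma$'s survives, raised to the power $N$.

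\textbf{Fixed-$k$ limits.} For $Q_0$ this ratio is $(c/d\cdot c/e)/(b\cdot c/de)=c/b$. The limit is
\[
(b/c)^N Q_0(\lambda q^Nw)\to \frac{(dw\lambda q/c,\,ew\lambda q/c;q)_\infty(c/d\lambda w,\,c/e\lambda w;q)_\infty}{(\lambda wq/b,\,de\lambda wq/c;q)_\infty(b/\lambda w,\,c/de\lambda w;q)_\infty}.
\]
For $R_k$ the relevant parameters are $\gamma=cq^k$ over $\gamma=bq^k$. Because these parameters are shifted by $q^k$, the splitting is applied at level $N-k$: I write $cq^k/z=c/(\lambda q^{N-k}w)$. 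This produces the factor $(c/b)^{N-k}$ together with explicit $q$-product limits $\mathcal R_k(w)$, which proves \eqref{eq:R-profile-limit}. The same computation with $\gamma=c^2q^k/bde$ over $\gamma=cq^k/de$ gives the ratio $c/b$ again and proves \eqref{eq:S-profile-limit}. The convergence is locally uniform because the $(\cdot;q)_{N-k}$ pieces converge normally to $(\cdot;q)_\infty$ with errors of size $O(|q|^{N-k})$.

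\textbf{Tail bounds and the profile.} Next I split each $k$-sum at $L$.
\begin{itemize}
\item For $k\le L$ I use the limits above. Their errors produce the terms $C|q|^{N}$ after multiplication by $(c/b)^N$, with constants depending on $L$.
\item For $L<k\le N$ the same factorization gives $|R_k|\le C'|c/b|^{N-k}$ uniformly. The partial tail is then bounded by $|c/b|^N\sum_{k>L}|f_k||b/c|^k$. This tends to zero as $L\to\infty$, giving $\eta_L$, provided $\sum|f_k||b/c|^k<\infty$. That is exactly the weighted absolute summability hypothesis; since $f_k=\OO(q^k)$, it holds when $|qb/c|<1$.
\item For $k>N$ the point $z$ lies outside the annular regime of the shifted products. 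There $R_k$ and $S_k$ are uniformly bounded, by the argument of \cref{lem:basis-bounded}. The contribution is then $\sum_{k>N}|f_k|=O(|q|^N)$, and the $|q|^L$ term absorbs the remaining layer mismatch.
\end{itemize}
Collecting these pieces gives \eqref{eq:profile-tail-bound}. Setting $\mathcal Q_N(w):=(b/c)^NQ(\lambda q^Nw)$ restricted to the first $L$ layers plus the limit, and letting $L\to\infty$ slowly with $N$, yields \eqref{eq:Q-exponential-profile}.

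The main obstacle is uniformity in the middle range $L<k\le N$. There the shifted annular splitting must be controlled simultaneously in $k$ and $N$, and $(\lambda wq^{1+k}/b;q)_{N-k}$ must be kept away from zero. I would handle this by choosing the annulus to avoid the closures of the finitely many limiting zero circles and by using a single normal-convergence bound valid for all finite truncations.
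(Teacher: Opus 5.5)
Your proposal is correct and follows the paper's proof essentially step for step. It uses the same ingredients: the splitting of the reciprocal factors at level $N-k$ to extract $(c/b)^{N-k}$ (and $(c/b)^N$ for $Q_0$), the fixed-$k$ normal limits, and the three-range summation ($k\le L$; $L<k\le N$ via the weighted summability $\sum_k|f_k||b/c|^k<\infty$; and $k>N$ via $\sum_{k>N}|f_k|=\OO(|q|^N)$). One small slip: the finite product to keep away from zero in the middle range is $(\lambda wq/b;q)_{N-k}$, not $(\lambda wq^{1+k}/b;q)_{N-k}$, which is harmless because the annulus avoids the zeros of $(\lambda wq/b;q)_\infty$.
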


\begin{proof}
We give the proof for $R_k$; the proof for $S_k$ and $Q_0$ is identical.  Put $M=N-k$.  If $0\le k\le N$, then
\[
\frac{cq^k/z}{bq^k/z}=\frac cb,
\]
and the reciprocal factors in $R_k$ have the same quadratic annular growth.  Applying the splitting argument of \cref{lem:q-annular-estimate} to numerator and denominator gives
\begin{align*}
R_k(\lambda q^Nw)
&=\left(\frac cb\right)^{N-k}
\frac{(c\lambda q^{N+k}w;q)_\infty}{(b\lambda q^{N+k}w;q)_\infty}
\frac{(\lambda wq/c;q)_{N-k}}{(\lambda wq/b;q)_{N-k}}
\frac{(c/\lambda w;q)_\infty}{(b/\lambda w;q)_\infty}.
\end{align*}
On an admissible annulus the last three factors are locally uniformly bounded in $N$ and, for fixed $k$, converge normally as $N\to\infty$.  This proves \eqref{eq:R-profile-limit}.  The estimates for $S_k$ and $Q_0$ follow from the same computation with the pairs of reciprocal parameters
\[
(c^2/bde,\ c/de),\qquad (c/d,c/e)\quad\hbox{against}\quad (b,c/de),
\]
respectively; in both cases the extracted ratio is again $(c/b)^{N-k}$, or $(c/b)^N$ for $Q_0$.

Because $f_k=\OO(q^k)$ and $g_k=\OO(q^k)$, the displayed bounds may be summed for $k\le N$ after fixing the annulus and excluding the limiting zero set.  The part $k>N$ is bounded by the absolute coefficient tails and tends to zero with $N$ after $L$ is fixed, while the part $k>L$ is bounded by an arbitrarily small multiple of the exponential profile once $L$ is large.  This gives \eqref{eq:profile-tail-bound} and hence \eqref{eq:Q-exponential-profile}.
\end{proof}

\begin{corollary}[exponential-profile obstruction]\label{cor:profile-gap}
The annular estimate required in \eqref{eq:Q-decay-target} cannot be obtained from separate absolute estimates of $Q_0$, $R_k$, and $S_k$.  Such estimates remove the quadratic growth of the canonical flat factor but leave an exponential-scale profile.  To obtain the required bound
\[
        Q(\lambda q^Nw)=\OO\bigl(q^{N(N+1)}\bigr)
\]
(up to the elementary exponential factor in \eqref{eq:Q-decay-target}), one must prove cancellation of all exponential profiles in \eqref{eq:Q-exponential-profile}.  Equivalently, one must prove the Laurent-coefficient cancellations \eqref{eq:coefficient-cancellation}.
\end{corollary}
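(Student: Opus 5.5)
The corollary has two parts: a \emph{negative} statement (separate absolute estimates are insufficient) and an \emph{equivalence} (exponential-profile cancellation is the same as \eqref{eq:coefficient-cancellation}). I would prove them in that order, using \cref{prop:exponential-profile} for the first and chaining \cref{prop:quotient-annular-estimate}, \cref{prop:annular-boundedness-target}, \cref{lem:laurent-criterion} and \cref{prop:coefficient-cancellation-target} for the second.

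For the negative part I would show that the term $Q_0$ alone already sits exactly at the exponential scale $|c/b|^N$. Apply the splitting of \cref{lem:q-annular-estimate} to the four reciprocal factors of $Q_0(\lambda q^Nw)$. Numerator and denominator then carry the same power $q^{N(N-1)}$, and the monomials combine to $(c/b)^N$; here the well-poised relations $(c/d)(c/e)=c\cdot(c^2/de)/c$ give the quotient $\bigl((c/d)(c/e)/(b\cdot c/de)\bigr)^N=(c/b)^N$. This leaves
\[
(b/c)^N Q_0(\lambda q^Nw)\longrightarrow \mathcal Q_{0}(w),
\]
where $\mathcal Q_0$ is a ratio of normally convergent products of the form $(\lambda wq/\gamma;q)_\infty$ and $(\gamma/\lambda w;q)_\infty$ with $\gamma\in\{c/d,c/e\}$ over $\gamma\in\{b,c/de\}$. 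For generic parameters and $w$ off the limiting zero circles, $\mathcal Q_0(w)\neq0$. Hence $|Q_0(\lambda q^Nw)|\ge c_0|c/b|^N$ on a sub-annulus for all large $N$.

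The target \eqref{eq:Q-decay-target} demands $|Q|\le C|q|^{N(N+1)}\rho^N$ with $\rho=|de\lambda^2w^2/bc|$, and this is $o(|c/b|^N)$. Now suppose an argument bounds the three pieces of \eqref{eq:Q-decomposition} separately, that is, bounds $|Q|$ by $|Q_0|+|H(b)|\sum|f_k||R_k|+|K(c/de)|\sum|g_k||S_k|$. Such a bound is at least $c_0|c/b|^N$, so it can never reach the target. By the same computation, \eqref{eq:R-profile-limit}--\eqref{eq:S-profile-limit} show that the two series sit at the same scale. So any valid proof of \eqref{eq:Q-decay-target} must exhibit cancellation among the profiles $\mathcal Q_0$, $\sum_k (c/b)^{-k}f_k\mathcal R_k$ and $\sum_k (c/b)^{-k}g_k\mathcal S_k$. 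The limit interchange in these sums is justified by the tail bound \eqref{eq:profile-tail-bound}. The same argument must also be run at every subsequent exponential scale.

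For the equivalence I would use the following chain. Bound \eqref{eq:Q-decay-target} holds iff $E$ satisfies \eqref{eq:annular-uniform-E}, by \cref{prop:quotient-annular-estimate}. This in turn holds iff $E$ is bounded near $0$, by \cref{prop:annular-boundedness-target}, and that holds iff $[z^{-n}]E=0$ for all $n\ge1$, by \cref{lem:laurent-criterion}. The last condition is exactly \eqref{eq:coefficient-cancellation}, by \cref{prop:coefficient-cancellation-target}. Since the full profile cancellation was just shown to be equivalent to \eqref{eq:Q-decay-target}, the stated equivalence follows.

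The main obstacle I expect is making the \emph{cannot} claim rigorous. This needs nonvanishing of the leading profile $\mathcal Q_0$, which I would check directly from its product form for generic parameters. It also needs the statement to be read as a claim about bounds built from the triangle inequality, not about the true size of $Q$.
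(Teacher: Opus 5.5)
Your proposal is correct and follows essentially the paper's route. The paper's proof likewise notes that separate estimates of $Q_0$, $R_k$, $S_k$ only reach the exponential profile \eqref{eq:Q-exponential-profile}, which cannot offset the $|q|^{-N(N+1)+\OO(N)}$ size of $Z_{b,c/de}$ from \cref{prop:quotient-annular-estimate}, and then appeals to \cref{lem:laurent-criterion}. Your explicit lower bound $|Q_0(\lambda q^Nw)|\ge c_0|c/b|^N$, from the nonvanishing leading profile $\mathcal L_{c/d,b}\mathcal L_{c/e,c/de}$, simply makes the ``cannot'' part sharper, and your chain through \cref{prop:coefficient-cancellation-target} makes the final equivalence explicit. (Your argument only shows that profile cancellation is necessary for \eqref{eq:Q-decay-target}. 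The converse would need \cref{prop:finite-order-profile-criterion}, but the equivalence the corollary actually asserts already follows from your chain.)
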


\begin{proof}
The canonical factor $Z_{b,c/de}(\lambda q^Nw)$ has size $|q|^{-N(N+1)+\OO(N)}$ by \cref{prop:quotient-annular-estimate}.  Separate estimates for the quotient terms only give \eqref{eq:Q-exponential-profile}.  Multiplying these estimates by $Z_{b,c/de}$ still leaves growth of order $|q|^{-N(N+1)+\OO(N)}$, whereas boundedness of $E=Z_{b,c/de}Q$ requires the quotient to have the quadratic decay \eqref{eq:Q-decay-target}.  By \cref{lem:laurent-criterion}, this is the same obstruction as the vanishing of the whole principal part at the puncture.
\end{proof}

\subsection{Cancellation of the leading annular profile}

The first exponential profile can be identified explicitly and cancels.  This cancellation is much weaker than the quadratic decay required in \eqref{eq:Q-decay-target}, but it is a useful test case: it shows that the annular estimate is governed by a hierarchy of profile cancellations rather than by a single crude bound.

Put, for nonzero $\alpha,\beta$,
\begin{equation}\label{eq:L-alpha-beta}
        \mathcal L_{\alpha,\beta}(w)
        :=\frac{(\lambda wq/\alpha,\alpha/\lambda w;q)_\infty}
        {(\lambda wq/\beta,\beta/\lambda w;q)_\infty}.
\end{equation}
Thus $\mathcal L_{\alpha,\beta}(w)=\theta(\alpha/\lambda w;q)/\theta(\beta/\lambda w;q)$ in the usual multiplicative theta notation $\theta(u;q)=(u,q/u;q)_\infty$.

\begin{lemma}[leading profile of a quotient of two reciprocal products]\label{lem:leading-profile-quotient}
Let $z=\lambda q^Nw$ with $w$ in an admissible compact annulus.  Then
\begin{equation}\label{eq:leading-profile-product}
        \left(\frac{\beta}{\alpha}\right)^N
        \frac{(\alpha z,\alpha/z;q)_\infty}{(\beta z,\beta/z;q)_\infty}
        \longrightarrow \mathcal L_{\alpha,\beta}(w)
\end{equation}
locally uniformly on the annulus.
\end{lemma}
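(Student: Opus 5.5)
We will prove \eqref{eq:leading-profile-product} by applying the splitting of \cref{lem:q-annular-estimate} separately to the two reciprocal factors $(\alpha/z;q)_\infty$ and $(\beta/z;q)_\infty$. The two $N$-dependent monomials produced there cancel except for the power $(\alpha/\beta)^N$. Since $(\alpha/z)^N/(\beta/z)^N=(\alpha/\beta)^N$, that power is exactly compensated by the prefactor $(\beta/\alpha)^N$. What remains will be a ratio of normally convergent products with an explicit limit.

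Concretely, for $z=\lambda q^Nw$ and $\gamma\in\{\alpha,\beta\}$ we use the identity already recorded in the proof of \cref{prop:quotient-annular-estimate}:
\[
(\gamma/z;q)_\infty=(-\gamma/z)^Nq^{N(N-1)/2}\Bigl(\frac{\lambda wq}{\gamma};q\Bigr)_N\Bigl(\frac{\gamma}{\lambda w};q\Bigr)_\infty .
\]
Taking the quotient for $\gamma=\alpha$ and $\gamma=\beta$, the factors $(-1)^Nq^{N(N-1)/2}z^{-N}$ cancel. This gives
\[
\Bigl(\frac{\beta}{\alpha}\Bigr)^N\frac{(\alpha z,\alpha/z;q)_\infty}{(\beta z,\beta/z;q)_\infty}
=\frac{(\alpha\lambda q^Nw;q)_\infty}{(\beta\lambda q^Nw;q)_\infty}\,
\frac{(\lambda wq/\alpha;q)_N}{(\lambda wq/\beta;q)_N}\,
\frac{(\alpha/\lambda w;q)_\infty}{(\beta/\lambda w;q)_\infty}.
\]
This is an exact identity for every $N\ge0$.

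Next we pass to the limit $N\to\infty$ factor by factor. The first quotient tends to $1$ uniformly for $r\le|w|\le R$, because $|\alpha\lambda q^Nw|,|\beta\lambda q^Nw|\le C|q|^N\to0$. The standard bound $|(u;q)_\infty-1|\le C|u|$ for $|u|$ small shows the numerator is uniformly close to $1$, and likewise the denominator is bounded away from $0$. The middle quotient converges uniformly to $(\lambda wq/\alpha;q)_\infty/(\lambda wq/\beta;q)_\infty$. Here we use that the tails $(\lambda wq^{N+1}/\gamma;q)_\infty$ tend uniformly to $1$, and that the admissibility hypothesis keeps the annulus away from the zeros of $(\lambda wq/\beta;q)_\infty$. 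The last quotient does not depend on $N$. Multiplying the three limits gives exactly $\mathcal L_{\alpha,\beta}(w)$ from \eqref{eq:L-alpha-beta}.

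The only point that needs care is uniformity of the denominators. The limiting product $\theta(\beta/\lambda w;q)$ vanishes only on the discrete circles $|w|=|\beta/\lambda|\,|q|^{j}$, and admissibility removes these circles. On the compact annulus the finite truncations $(\lambda wq/\beta;q)_N$ converge uniformly to a nonvanishing limit. Hence they are bounded below for all $N$ beyond some $N_0$. For the finitely many $N<N_0$ we additionally require that the annulus avoid the finitely many extra zeros of these finite products. With these denominator bounds in place, each factor converges uniformly, and local uniform convergence on the annulus follows.
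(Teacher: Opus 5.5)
Your proof is correct and is essentially the paper's argument: the paper also notes that $(\alpha z;q)_\infty/(\beta z;q)_\infty\to1$, applies the finite--infinite splitting of \cref{lem:q-annular-estimate} to $(\alpha/z;q)_\infty/(\beta/z;q)_\infty$ to extract $(\alpha/\beta)^N$, and lets $N\to\infty$; your uniformity discussion just supplies details the paper leaves implicit. One small remark: the zeros of $(\lambda wq/\beta;q)_N$ are already zeros of $(\lambda wq/\beta;q)_\infty$, so the extra exclusion you impose for small $N$ is not actually needed.
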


\begin{proof}
The factors $(\alpha z;q)_\infty/(\beta z;q)_\infty$ tend normally to $1$.  For the reciprocal factors we use the splitting already used in \cref{lem:q-annular-estimate}:
\[
\frac{(\alpha/z;q)_\infty}{(\beta/z;q)_\infty}
=\left(\frac{\alpha}{\beta}\right)^N
\frac{(\lambda wq/\alpha;q)_N}{(\lambda wq/\beta;q)_N}
\frac{(\alpha/\lambda w;q)_\infty}{(\beta/\lambda w;q)_\infty}.
\]
Letting $N\to\infty$ gives \eqref{eq:leading-profile-product}.
\end{proof}

Define the two scalar profile sums
\begin{equation}\label{eq:Fstar-Gstar}
        F_*:=\sum_{k=0}^{\infty}f_k\left(\frac bc\right)^k,
        \qquad
        G_*:=\sum_{k=0}^{\infty}g_k\left(\frac bc\right)^k,
\end{equation}
initially in the convergence region $|bq/c|<1$ and then elsewhere by meromorphic continuation in the parameters.  The nonterminating very-well-poised ${}_6\phi_5$ summation gives
\begin{align}\label{eq:Fstar-eval}
F_*&=\frac{(bc,bc/de,beq/c,bdq/c;q)_\infty}
{(bc/d,bc/e,bdeq/c,bq/c;q)_\infty},\\
G_*&=\frac{(c^3/bd^2e^2,bc/de,q/e,q/d;q)_\infty}
{(c^2/de^2,c^2/d^2e,cq/bde,bq/c;q)_\infty}.\label{eq:Gstar-eval}
\end{align}

\begin{proposition}[cancellation of the leading annular profile]\label{prop:first-profile-cancellation}
On every admissible compact annulus on which the limiting products are nonzero, one has
\begin{align}\label{eq:first-profile-identity}
&\mathcal L_{c/d,b}(w)\mathcal L_{c/e,c/de}(w)
\nonumber\\
&\qquad=H(b)F_*\,\mathcal L_{c,b}(w)
       +K(c/de)G_*\,\mathcal L_{c^2/bde,c/de}(w).
\end{align}
Consequently, in the convergence subregion in which the dominated passage to the limit is justified,
\begin{equation}\label{eq:first-profile-vanishing}
        \left(\frac bc\right)^N Q(\lambda q^Nw)\longrightarrow0
\end{equation}
locally uniformly on the annulus.  Under the slightly stronger absolute convergence hypotheses needed to dominate the first correction terms, the convergence in \eqref{eq:first-profile-vanishing} improves to
\begin{equation}\label{eq:first-profile-rate}
        Q(\lambda q^Nw)=\OO\left(\left(\frac cb\right)^Nq^N\right).
\end{equation}
\end{proposition}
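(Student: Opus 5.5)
The plan is to reduce \eqref{eq:first-profile-identity} to a single three-term theta identity, namely the Weierstrass--Riemann addition formula \eqref{eq:weierstrass-addition-input}. The limit statements \eqref{eq:first-profile-vanishing} and \eqref{eq:first-profile-rate} then follow from the profile limits of \cref{prop:exponential-profile} and \cref{lem:leading-profile-quotient} by dominated convergence in $k$.

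\emph{Step 1: simplify the scalar prefactors.} I would first compute $H(b)F_*$ and $K(c/de)G_*$ from \eqref{eq:Hb-Kcde}, \eqref{eq:Kcde}, \eqref{eq:Fstar-eval} and \eqref{eq:Gstar-eval}. In $H(b)F_*$ the factors $(bc,bc/d,bc/e,bc/de;q)_\infty$ cancel. What remains pairs into theta functions:
\[
H(b)F_*=\frac{\theta(c/bd,c/be;q)}{\theta(c/b,c/bde;q)}.
\]
The same cancellation should occur for $K(c/de)G_*$, either by direct computation or by applying the involution of \cref{prop:involution}. It should give a theta quotient in the involuted parameters, of the form $\theta(e,d)/\theta(\cdot,\cdot)$ with arguments such as $c^2/de^2,c^2/d^2e$ against $bde/c,c/b$.

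\emph{Step 2: reduce to an addition formula.} Write $u=\lambda w$ and $\mathcal L_{\alpha,\beta}=\theta(\alpha/u)/\theta(\beta/u)$. Multiplying \eqref{eq:first-profile-identity} by $\theta(b/u)\theta(c/deu)$ turns it into
\[
\theta\Bigl(\frac{c}{du},\frac{c}{eu}\Bigr)=\frac{\theta(c/bd,c/be)}{\theta(c/b,c/bde)}\,\theta\Bigl(\frac cu,\frac{c}{deu}\Bigr)+[K(c/de)G_*]\,\theta\Bigl(\frac{c^2}{bdeu},\frac bu\Bigr).
\]
Both sides are theta functions of the same quasi-periodicity in $u$. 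In each product the two arguments multiply to $c^2/deu^2$, up to the common shift. After clearing denominators this is exactly \eqref{eq:weierstrass-addition-input}, with the choices $x^2=c^2/de\,u^{-2}$-type and $y,v$ read off from the ratios $d/e$, $b\,de/c$, and so on. The first task is to find this matching of $(x,y,u,v)$. As a consistency check, I would evaluate at $u=b$ and at $u=c/de$, which recovers the two coefficients. Since a theta quotient of this quasi-periodicity is determined by two values, this also gives an independent proof by interpolation.

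\emph{Step 3: the limits.} Expand $(b/c)^NQ(\lambda q^Nw)$ using \eqref{eq:Q-decomposition}. By \cref{lem:leading-profile-quotient}, the $Q_0$ term tends to $\mathcal L_{c/d,b}\mathcal L_{c/e,c/de}$. By the formula in the proof of \cref{prop:exponential-profile}, $(b/c)^{N}R_k\to (b/c)^k\mathcal L_{c,b}(w)$ for each fixed $k$, with a limit independent of $k$ apart from this factor. Likewise $(b/c)^NS_k\to(b/c)^k\mathcal L_{c^2/bde,c/de}(w)$. Summing over $k$ produces $H(b)F_*\mathcal L_{c,b}+K(c/de)G_*\mathcal L_{c^2/bde,c/de}$, and the difference vanishes by Step 2.

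\emph{Main obstacle.} The delicate point is the interchange of limit and sum, which needs a uniform domination in $N$. For $k\le N$ the bounds $|(b/c)^{N-k}R_k|\le C$ reduce the problem to summability of $f_k(b/c)^k$, that is $|bq/c|<1$; this is the ``convergence subregion'' of the statement. For $k>N$ I would bound $R_k$ directly, since both reciprocal products are then near $1$, and use $|f_k|\le C|q|^k$. For the rate \eqref{eq:first-profile-rate}, I would expand each finite product $(\lambda wq/\alpha;q)_{N-k}=(\lambda wq/\alpha;q)_\infty\,(1+\OO(q^{N-k}))$. This exhibits first corrections of size $q^{N-k}$, which are summable against $f_k(b/c)^k q^{-k}$ under the strengthened hypothesis $|b/c|<1$ on those coefficients. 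Combined with the exact cancellation of the leading term, this yields $\OO((c/b)^Nq^N)$.
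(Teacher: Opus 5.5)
Your proposal is correct and follows the paper's proof. The paper likewise reduces $H(b)F_*$ and $K(c/de)G_*$ to theta quotients, multiplies by $\theta(b/u)\theta(c/deu)$, and proves the resulting degree-two theta identity by checking it at $u=b$ and $u=c/de$. This is equivalent to the Weierstrass addition formula, matched for instance by $x=c/(u\sqrt{de})$, $y=\sqrt{e/d}$, $v=\sqrt{de}$ and fourth variable $c/(b\sqrt{de})$. The paper then sums the leading profiles from \cref{lem:leading-profile-quotient}; your domination and rate estimates simply spell out what the paper leaves implicit.

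The only slip is your tentative value in Step 1. There the factors $c^2/de^2$ and $c^2/d^2e$ cancel completely, giving $K(c/de)G_*=\theta(d,e;q)/\theta(bde/c,c/b;q)$. This is exactly the value your interpolation check at $u=c/de$ requires.
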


\begin{proof}
By \cref{lem:leading-profile-quotient}, the leading profiles of $Q_0$, $R_k$, and $S_k$ in \eqref{eq:Q-decomposition}, after multiplication by $(b/c)^N$, are respectively
\[
\mathcal L_{c/d,b}(w)\mathcal L_{c/e,c/de}(w),\qquad
\left(\frac bc\right)^k\mathcal L_{c,b}(w),\qquad
\left(\frac bc\right)^k\mathcal L_{c^2/bde,c/de}(w).
\]
Summing over $k$ gives the scalar sums $F_*$ and $G_*$.  The evaluations \eqref{eq:Fstar-eval} and \eqref{eq:Gstar-eval} are the nonterminating ${}_6\phi_5$ summation with special parameters
\[
        a=bc/q,\quad (b_1,b_2,b_3)=(d,e,c^2/deq)
\]
for $F_*$, and
\[
        a=c^3/bd^2e^2q,\quad (b_1,b_2,b_3)=(c/bd,c/be,c^2/deq)
\]
for $G_*$.  Multiplying these evaluations by $H(b)$ and $K(c/de)$ gives the simpler theta-quotients
\begin{align*}
H(b)F_*&=\frac{\theta(c/bd;q)\theta(c/be;q)}
              {\theta(c/b;q)\theta(c/bde;q)},\\
K(c/de)G_*&=\frac{\theta(d;q)\theta(e;q)}
              {\theta(bde/c;q)\theta(c/b;q)}.
\end{align*}
With $t=1/(\lambda w)$, identity \eqref{eq:first-profile-identity} is therefore equivalent, after multiplying by $\theta(bt;q)\theta(ct/de;q)$, to
\begin{align*}
\theta(ct/d;q)\theta(ct/e;q)
&=\frac{\theta(c/bd;q)\theta(c/be;q)}
        {\theta(c/b;q)\theta(c/bde;q)}\,\theta(ct;q)\theta(ct/de;q)\\
&\quad+\frac{\theta(d;q)\theta(e;q)}
        {\theta(bde/c;q)\theta(c/b;q)}\,\theta(bt;q)\theta(c^2t/bde;q).
\end{align*}
Both sides are theta functions of degree two in $t$.  The constants have been chosen so that the identity holds at $t=1/b$ and at $t=de/c$; the standard two-dimensional interpolation property for degree-two theta functions, equivalently the Weierstrass--Riemann addition formula in multiplicative notation \cite[Ch.~20]{WhittakerWatson1927}, proves the identity for all $t$.

This proves cancellation of the leading profile.  The rate statement follows by applying the same splitting to the first correction terms: after the leading term is removed, every fixed layer gains a factor $q^N$, while the coefficient tails are controlled by the strengthened absolute convergence assumptions.

\end{proof}

\subsection{A profile-generating function for the higher annular layers}

The leading cancellation above suggests trying to expand the residual quotient in powers of the annular scaling parameter.  This can be done exactly.  The resulting object is useful because it separates two issues: finite-order profile cancellation and the much stronger uniform estimate needed to obtain quadratic decay.

Fix an admissible annulus and put $z=\lambda q^Nw$.  For nonzero $\alpha,\beta$ define
\begin{align}\label{eq:profile-kernel-P}
\mathcal P_{\alpha,\beta}(s,w)
&:=\frac{(\alpha\lambda ws;q)_\infty}{(\beta\lambda ws;q)_\infty}
   \frac{(\lambda wq/\alpha;q)_\infty}{(\lambda wq/\beta;q)_\infty}
   \frac{(\lambda wqs/\beta;q)_\infty}{(\lambda wqs/\alpha;q)_\infty}
   \frac{(\alpha/\lambda w;q)_\infty}{(\beta/\lambda w;q)_\infty}.
\end{align}
This function is holomorphic for $s$ in a small disc about the origin, uniformly for $w$ on compact subannuli avoiding the limiting zero sets.  Moreover,
\begin{equation}\label{eq:P-exact-scaling}
\left(\frac{\beta}{\alpha}\right)^N
\frac{(\alpha\lambda q^Nw,\alpha/\lambda q^Nw;q)_\infty}
     {(\beta\lambda q^Nw,\beta/\lambda q^Nw;q)_\infty}
=\mathcal P_{\alpha,\beta}(q^N,w).
\end{equation}
Indeed, \eqref{eq:P-exact-scaling} is just the finite--infinite splitting of the reciprocal product used in \cref{lem:leading-profile-quotient}, but with the tail written as a holomorphic function of $s=q^N$.

Define the scaled profile-generating residual
\begin{align}\label{eq:profile-generating-function}
\mathcal Q(s,w)
&:=\mathcal P_{c/d,b}(s,w)\mathcal P_{c/e,c/de}(s,w)\nonumber\\
&\quad-H(b)\sum_{k=0}^{\infty}f_k\,\mathcal P_{cq^k,bq^k}(s,w)
      -K(c/de)\sum_{k=0}^{\infty}g_k\,\mathcal P_{c^2q^k/bde,cq^k/de}(s,w).
\end{align}
The sums converge normally for $|s|$ sufficiently small after the same generic pole-avoidance assumptions as before.  The exact relation with the quotient in \eqref{eq:Q-decomposition} is
\begin{equation}\label{eq:scaled-Q-equals-profile-generator}
        \left(\frac bc\right)^N Q(\lambda q^Nw)=\mathcal Q(q^N,w).
\end{equation}
Thus the leading cancellation in \cref{prop:first-profile-cancellation} is precisely the statement
\begin{equation}\label{eq:profile-generator-zero}
        \mathcal Q(0,w)=0.
\end{equation}

\begin{proposition}[finite-order profile criterion]\label{prop:finite-order-profile-criterion}
Let $J\ge0$.  If
\begin{equation}\label{eq:profile-coeff-vanishing-J}
        [s^j]\,\mathcal Q(s,w)=0\qquad (0\le j\le J)
\end{equation}
locally uniformly on the admissible annulus, then
\begin{equation}\label{eq:finite-order-profile-bound}
        Q(\lambda q^Nw)=\OO\!\left(\left(\frac cb\right)^N q^{(J+1)N}\right)
\end{equation}
locally uniformly on that annulus.  Conversely, if \eqref{eq:finite-order-profile-bound} holds with $J$ replaced by every fixed integer, then all Taylor coefficients of $\mathcal Q$ at $s=0$ vanish.  If, in addition, the normal convergence radius in $s$ is uniform on the annulus, then $\mathcal Q(s,w)\equiv0$ for $|s|$ sufficiently small and hence the annular estimate follows without using Bailey's nonterminating ${}_8\phi_7$ summation.
\end{proposition}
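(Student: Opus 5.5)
The proof rests on the exact scaling relation \eqref{eq:scaled-Q-equals-profile-generator}, $(b/c)^NQ(\lambda q^Nw)=\mathcal Q(q^N,w)$. With it, all three assertions become statements about the Taylor expansion of the holomorphic function $s\mapsto\mathcal Q(s,w)$ at $s=0$, evaluated at the geometric sequence $s=q^N$.

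\emph{Holomorphy and uniform bounds.} First I would confirm that $\mathcal Q(s,w)$ is holomorphic in $|s|<\rho$ for some $\rho>0$, jointly continuous in $w$ on a compact subannulus, and bounded there, say by $\sup|\mathcal Q|\le M$.

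For each single kernel $\mathcal P_{\alpha,\beta}(s,w)$ in \eqref{eq:profile-kernel-P} this is immediate, because it is a finite product of normally convergent $q$-products in $s$. The two series in \eqref{eq:profile-generating-function} need a bound on $\mathcal P_{cq^k,bq^k}(s,w)$ and $\mathcal P_{c^2q^k/bde,cq^k/de}(s,w)$ that is uniform in $k$, so that the estimates $f_k=\OO(q^k)$ and $g_k=\OO(q^k)$ of \cref{thm:kernel-coeff} give normal convergence.

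\begin{itemize}
\item The factors $(\alpha\lambda ws;q)_\infty/(\beta\lambda ws;q)_\infty$ and $(\alpha/\lambda w;q)_\infty/(\beta/\lambda w;q)_\infty$ tend to $1$ as $k\to\infty$, since $\alpha,\beta\propto q^k$.
\item The factors $(\lambda wq/\alpha;q)_\infty(\lambda wqs/\beta;q)_\infty/\bigl((\lambda wq/\beta;q)_\infty(\lambda wqs/\alpha;q)_\infty\bigr)$ have arguments of size $q^{-k}$, so they grow. I would rewrite them through the splitting of \cref{lem:q-annular-estimate} and use the standing hypothesis of the statement ("normal convergence for $|s|$ small'') to absorb the resulting factor $(c/b)^{\pm k}$ against $f_k,g_k$. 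This is where the radius $\rho$ may depend on the annulus.
\end{itemize}

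\emph{Forward direction.} Assume \eqref{eq:profile-coeff-vanishing-J}. Then $\mathcal Q(s,w)=s^{J+1}\widetilde{\mathcal Q}(s,w)$ with $\widetilde{\mathcal Q}$ holomorphic. Applying Cauchy's estimate on the circle $|s|=\rho/2$ gives $|\widetilde{\mathcal Q}(s,w)|\le 2^{J+1}M\rho^{-J-1}$ for $|s|\le\rho/4$, uniformly in $w$. Setting $s=q^N$ for $N$ so large that $|q|^N\le\rho/4$ and multiplying by $(c/b)^N$ gives \eqref{eq:finite-order-profile-bound}. The finitely many small $N$ are covered by adjusting the constant.

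\emph{Converse.} Suppose \eqref{eq:finite-order-profile-bound} holds for every $J$. Then $\mathcal Q(q^N,w)=\OO(q^{(J+1)N})$ for every $J$. Let $j_0$ be the least index with $[s^{j_0}]\mathcal Q\ne0$ at a given $w$. Then $\mathcal Q(q^N,w)\sim a_{j_0}(w)q^{j_0N}$, which contradicts the bound with $J=j_0$. So all Taylor coefficients vanish pointwise in $w$, and hence locally uniformly.

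\emph{Conclusion.} If the radius of convergence is uniform in $w$, then vanishing Taylor coefficients give $\mathcal Q(s,w)\equiv0$ on $|s|<\rho$ by the identity theorem. Hence $Q(\lambda q^Nw)=0$ for all large $N$. This gives the quotient estimate \eqref{eq:Q-decay-target} trivially, so \cref{prop:quotient-annular-estimate} and \cref{prop:annular-boundedness-target} yield the annular estimate without Bailey's summation.

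\emph{Main obstacle.} The hardest step is the first one: the bound on the $k$-indexed profile kernels that is uniform in $k$, together with a radius in $s$ that does not shrink with $k$. This is where genuine convergence conditions on the parameters enter. I would first try a direct comparison of the $q^{-k}$-scaled factors using the splitting identity, and otherwise restrict to a parameter subregion and extend by meromorphic continuation.
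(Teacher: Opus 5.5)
\textbf{There is a genuine gap in your Step 1, and the paper's proof shares it.} Your Steps 2--4 are exactly the paper's proof: the Taylor/Cauchy estimate at $s=q^N$ transported through $(b/c)^NQ(\lambda q^Nw)=\mathcal Q(q^N,w)$, the lowest nonvanishing coefficient for the converse, and the identity theorem for the last clause. The gap is the step you deferred as the main obstacle, holomorphy of $s\mapsto\mathcal Q(s,w)$ on a fixed disc. The paper does not prove this either; it only asserts, just before the proposition, that the $k$-sums converge normally for small $|s|$.

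This step fails, and neither restricting parameters nor continuing meromorphically can repair it, because the obstruction is a pole, not growth. In $\mathcal P_{cq^k,bq^k}(s,w)$ the denominator factor $(\lambda wq^{1-k}s/c;q)_\infty$ vanishes at $s=cq^{k-1}/(\lambda w)$. For generic parameters no numerator factor cancels it. So the $k$th kernel has radius of holomorphy at most $|cq^{k-1}/\lambda w|$, which tends to $0$ in every parameter region.

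The poles also survive the summation. With $z=\lambda ws$ one checks directly from \eqref{eq:profile-kernel-P} that
\[
\mathcal P_{\alpha,\beta}(s,w)=\frac{\theta(\alpha/\lambda w;q)}{\theta(\beta/\lambda w;q)}\,
\frac{(\alpha z,qz/\beta;q)_\infty}{(\beta z,qz/\alpha;q)_\infty}.
\]
Hence, since the factors $(c/b)^{\pm k}$ you planned to absorb cancel exactly,
\[
\sum_{k\ge0}f_k\,\mathcal P_{cq^k,bq^k}(s,w)
=\frac{\theta(c/\lambda w;q)}{\theta(b/\lambda w;q)}\,
\frac{(cz,qz/b;q)_\infty}{(bz,qz/c;q)_\infty}\sum_{k\ge0}f_k\ph_k(z;b,c).
\]
So the $f$-part is the $\ph$-Taylor series itself near the puncture.
\begin{itemize}
\item Its poles at $z=cq^j$ ($j\ge0$) have generically nonzero residues. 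For instance, for $d=q^{-n}$ Jackson's summation identifies the series with $F/(AH(b))$, which visibly has poles at $z=cq^j$ for $0\le j<n$.
\item The product term is holomorphic near $z=0$.
\item The $g$-part has poles only on the different lattice $z=(c^2/bde)q^j$, so it cannot cancel them.
\end{itemize}
Hence $\mathcal Q(\cdot,w)$ has poles accumulating at $s=0$ and is holomorphic on no disc about the origin.

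\textbf{Consequences.} The coefficients $[s^j]\mathcal Q$ exist only as formal termwise sums. By \cref{lem:profile-kernel-coefficients} they are combinations of the moments $F_m,G_m$ with $|m|\le j$, which converge only when roughly $|bq^{1-j}/c|<1$. Cauchy's estimate, the asymptotics $\mathcal Q(q^N,w)\sim a_{j_0}q^{j_0N}$ in your converse, and the identity theorem are therefore all unavailable.

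The first two assertions can be rescued only as asymptotic statements along $s=q^N$, in a $J$-dependent parameter region where $|b/c|$ is small compared with a power of $|q|$. To do this, split each $k$-sum near $k\approx N/2$. Use the Taylor estimate only for the kernels whose radius $|cq^{k-1}/\lambda w|$ is much larger than $|q|^N$. Bound the remaining terms, and the tails of the formal coefficients, using $f_k,g_k=\OO(q^k)$.

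The last clause cannot be rescued, because its uniform positive radius never exists for generic nonterminating parameters. So neither your argument nor the paper's proves the proposition beyond the conditional version in which holomorphy of $\mathcal Q(\cdot,w)$ on a fixed disc about $s=0$ is simply assumed.
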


\begin{proof}
The first implication is Taylor's formula applied to the holomorphic function $s\mapsto\mathcal Q(s,w)$ at $s=0$, followed by \eqref{eq:scaled-Q-equals-profile-generator}.  The converse follows by dividing by $q^{jN}$ successively and letting $N\to\infty$ along the sequence $s=q^N$.  If all Taylor coefficients vanish and the expansion is normally convergent in a fixed disc, the identity theorem gives $\mathcal Q\equiv0$ in that disc.  Then \eqref{eq:scaled-Q-equals-profile-generator} gives vanishing of $Q$ on an infinite sequence of annuli accumulating at the puncture; analytic continuation gives the residual identity.
\end{proof}

The next coefficient already shows why the leading-profile proof does not simply repeat.  Put
\begin{equation}\label{eq:nu-alpha-beta}
        \nu_{\alpha,\beta}:=\beta-\alpha+q\left(\frac1\alpha-\frac1\beta\right).
\end{equation}
From \eqref{eq:profile-kernel-P},
\begin{equation}\label{eq:P-first-coeff}
        [s]\,\mathcal P_{\alpha,\beta}(s,w)
        =\frac{\lambda w}{1-q}\,\nu_{\alpha,\beta}\,\mathcal P_{\alpha,\beta}(0,w).
\end{equation}
Consequently $[s]\mathcal Q$ is a linear combination of the four theta products appearing in \eqref{eq:first-profile-identity}, but with scalar sums
\begin{equation}\label{eq:contiguous-profile-sums}
        F_{+}:=\sum_{k\ge0} f_k\left(\frac bc\right)^kq^k,
        \quad
        F_{-}:=\sum_{k\ge0} f_k\left(\frac bc\right)^kq^{-k},
\end{equation}
with analogous sums $G_+$ and $G_-$ for the $g_k$ sequence.  Unlike $F_*=F_0$ and $G_*=G_0$, these are not in general at the summable argument of the nonterminating very-well-poised ${}_6\phi_5$ summation.  They are contiguous, off-balance profile sums.

\begin{corollary}[the first higher profile identity]\label{cor:first-unresolved-profile}
The next coefficient identity in the hierarchy is
\begin{equation}\label{eq:first-correction-target}
        [s]\,\mathcal Q(s,w)=0.
\end{equation}
After insertion of \eqref{eq:P-first-coeff}, this identity is equivalent to a degree-two theta interpolation identity whose two coefficients contain the contiguous sums $F_\pm$ and $G_\pm$ in \eqref{eq:contiguous-profile-sums}.  Thus the first correction profile is not forced by the two ${}_6\phi_5$ evaluations used for \cref{prop:first-profile-cancellation}; it requires either a new contiguous evaluation, a telescoping relation among the four off-balance sums, or, as in \cref{thm:global-profile-identity} below, a global argument proving the vanishing of $\mathcal Q$ without coefficient-by-coefficient summation.
\end{corollary}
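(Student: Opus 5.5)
The plan is to compute $[s]\,\mathcal Q(s,w)$ term by term from the definition \eqref{eq:profile-generating-function}. First I will justify that $[s]$ may be taken inside the two $k$-sums: the sums converge normally for $|s|$ small, uniformly on the admissible annulus, so Cauchy's coefficient formula on a small circle in $s$ commutes with summation. Next I apply \eqref{eq:P-first-coeff} to each kernel.

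For the product term, the Leibniz rule gives
\[
[s]\bigl(\mathcal P_{c/d,b}\mathcal P_{c/e,c/de}\bigr)=\frac{\lambda w}{1-q}\bigl(\nu_{c/d,b}+\nu_{c/e,c/de}\bigr)\mathcal L_{c/d,b}(w)\mathcal L_{c/e,c/de}(w),
\]
using $\mathcal P_{\alpha,\beta}(0,w)=\mathcal L_{\alpha,\beta}(w)$.

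For the $f_k$-layer I use $\nu_{cq^k,bq^k}=(b-c)q^k+q(1/c-1/b)q^{-k}$. I also use the theta quasi-periodicity $\theta(qu;q)=-u^{-1}\theta(u;q)$, which gives $\mathcal L_{cq^k,bq^k}(w)=(b/c)^k\mathcal L_{c,b}(w)$. Summing over $k$ then yields
\[
\frac{\lambda w}{1-q}\,H(b)\Bigl[(b-c)F_++q\bigl(\tfrac1c-\tfrac1b\bigr)F_-\Bigr]\mathcal L_{c,b}(w).
\]
The $g_k$-layer is handled identically with $(\alpha,\beta)=(c^2/bde,c/de)$, producing $G_\pm$ and $\mathcal L_{c^2/bde,c/de}(w)$.

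Now I cancel the common factor $\lambda w/(1-q)$ and put $t=1/(\lambda w)$. As in the proof of \cref{prop:first-profile-cancellation}, I multiply by $\theta(bt;q)\theta(ct/de;q)$. Then \eqref{eq:first-correction-target} becomes the following identity among degree-two theta functions of $t$:
\begin{align*}
(\nu_{c/d,b}+\nu_{c/e,c/de})\,\theta(ct/d,ct/e;q)
&=H(b)\Bigl[(b-c)F_++q\bigl(\tfrac1c-\tfrac1b\bigr)F_-\Bigr]\theta(ct,ct/de;q)\\
&\quad+K(c/de)\Bigl[\tfrac{c}{de}-\tfrac{c^2}{bde}+q\bigl(\tfrac{bde}{c^2}-\tfrac{de}{c}\bigr)G_+'\Bigr]\theta(bt,c^2t/bde;q).
\end{align*}
Here the bracket in the last line is schematic: it stands for the analogous combination $\nu$-weighted with $G_+$ and $G_-$. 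I will write it out exactly once the quasi-periodicity factor for the $g$-family is fixed.

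I then use \eqref{eq:first-profile-identity} to rewrite the left side in the same two-dimensional space. By the same two-point interpolation argument (the Weierstrass--Riemann formula \eqref{eq:weierstrass-addition-input}), the identity is equivalent to two scalar equations, obtained by evaluating at $t=1/b$ and at $t=de/c$. The first equation involves only $F_\pm$ and the second only $G_\pm$. This establishes the claimed equivalence.

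For the final, interpretive sentence I will observe the following. With $a=bc/q$, the sums $F_\pm$ are ${}_6W_5$ series with argument $(bq/c)\,q^{\pm1}$, while Rogers' balance in \eqref{eq:rogers-6phi5-input} requires exactly the argument $bq/c$. So \eqref{eq:rogers-6phi5-input} does not evaluate them, and the analogous statement holds for $G_\pm$.

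I expect the main obstacle to be analytic rather than algebraic. $F_-$ needs $|b/c|<1$, which is stronger than the convergence condition for $F_*$, so the statement must be read under meromorphic continuation in the parameters. The interchange of $[s]$ with the $k$-sum must be justified in that subregion and then continued. The claim that the identity is ``not forced'' is a structural remark, not a theorem; I would support it only by the argument mismatch above.
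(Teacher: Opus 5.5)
Your computation is the paper's proof written out in full. It uses termwise insertion of \eqref{eq:P-first-coeff}, the quasi-periodicity $\mathcal L_{cq^k,bq^k}=(b/c)^k\mathcal L_{c,b}$, and the split of $\nu_{cq^k,bq^k}$ into the weights $q^{\pm k}$ that produce $F_\pm$. The $g$-layer works the same way: your ``schematic'' bracket should simply read $(c/de-c^2/bde)G_++q(bde/c^2-de/c)G_-$, with quasi-periodicity factor again $(b/c)^k$. Denominators are then cleared with $\theta(bt;q)\theta(ct/de;q)$.

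Evaluating at $t=1/b$ and $t=de/c$ is a correct sharpening that the paper does not spell out. For $c/b,\,c/bde\notin q^{\mathbb Z}$, the two spanning products $\theta(ct,ct/de;q)$ and $\theta(bt,c^2t/bde;q)$ are diagonal with respect to these evaluations. Using $H(b)F_*=\theta(c/bd,c/be;q)/\theta(c/b,c/bde;q)$ and $K(c/de)G_*=\theta(d,e;q)/\theta(bde/c,c/b;q)$, the identity $[s]\mathcal Q=0$ becomes equivalent to the two three-term relations
\[
(b-c)F_++q\Bigl(\frac1c-\frac1b\Bigr)F_-=\nu F_*,\qquad
\Bigl(\frac{c}{de}-\frac{c^2}{bde}\Bigr)G_++q\Bigl(\frac{bde}{c^2}-\frac{de}{c}\Bigr)G_-=\nu G_*,
\]
where $\nu=\nu_{c/d,b}+\nu_{c/e,c/de}$. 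This pins down exactly what a ``telescoping relation'' in the statement would have to supply.

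The one step that fails as written is your justification for passing $[s]$ through the $k$-sums by Cauchy's formula on a fixed small circle. The $k$th kernel $\mathcal P_{cq^k,bq^k}(s,w)$ has $(\lambda wq^{1-k}s/c;q)_\infty$ in its denominator, hence a pole at $s=cq^{k-1}/(\lambda w)$; the $g$-kernels behave the same way. So the discs of holomorphy shrink like $|q|^k$, and no circle about $s=0$ encloses only regular points of all terms. The normal-convergence remark after \eqref{eq:profile-generating-function} cannot be used for this.

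This does no harm to the corollary. The paper's proof, and its definition of the moments, read $[s]\mathcal Q$ termwise, which is exactly what you computed, so you should take that as the definition. If you want to tie the termwise coefficient to the function itself, use the sequence $s=q^N$ instead. For $|b/c|<1$, splitting the $k$-sums near $k=N$ shows that $q^{-N}\bigl(\mathcal Q(q^N,w)-[s^0]\mathcal Q\bigr)$ tends to the termwise $[s]\mathcal Q$.
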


\begin{proof}
Formula \eqref{eq:P-first-coeff} gives the coefficient of $s$ in each factor of \eqref{eq:profile-generating-function}.  For the $R_k$-part one obtains the factors
\[
\nu_{cq^k,bq^k}=q^k(b-c)+q^{1-k}\left(\frac1c-\frac1b\right),
\]
which produce precisely the two contiguous sums $F_+$ and $F_-$.  The $S_k$-part gives the analogous pair $G_+$ and $G_-$, while the product term gives the derivative of the left-hand side of \eqref{eq:first-profile-identity}.  Multiplying by the common denominator $\theta(bt;q)\theta(ct/de;q)$ again converts the assertion to a degree-two theta identity, but the scalar coefficients now involve the four off-balance sums.  This proves the stated equivalence and identifies the new obstruction.
\end{proof}

\subsection{The full hierarchy as contiguous profile moments}

The obstruction identified in \cref{cor:first-unresolved-profile} can be organized uniformly.  This replaces the vague phrase ``higher profiles'' by an explicit family of scalar identities.  The point is that every coefficient of the profile-generating function is controlled by finitely many contiguous values of the two very-well-poised coefficient series.

For an integer $m$ put
\begin{equation}\label{eq:profile-moments}
        F_m:=\sum_{k\ge0} f_k\left(\frac bc\right)^k q^{mk},
        \qquad
        G_m:=\sum_{k\ge0} g_k\left(\frac bc\right)^k q^{mk},
\end{equation}
whenever the series converge, and elsewhere by meromorphic continuation in the parameters.  Thus $F_0=F_*$ and $G_0=G_*$ are the summable moments evaluated in \eqref{eq:Fstar-eval}--\eqref{eq:Gstar-eval}, while the first correction involves $F_{\pm1}$ and $G_{\pm1}$.

\begin{lemma}[Taylor coefficients of the profile kernel]\label{lem:profile-kernel-coefficients}
For each $j\ge0$ one has
\begin{equation}\label{eq:P-coeff-general}
[s^j]\mathcal P_{\alpha,\beta}(s,w)
=\mathcal L_{\alpha,\beta}(w)(\lambda w)^j
\sum_{u=0}^j
\frac{(\alpha/\beta;q)_u(\alpha/\beta;q)_{j-u}}{(q;q)_u(q;q)_{j-u}}
\beta^u\left(\frac q\alpha\right)^{j-u}.
\end{equation}
In particular, for fixed $j$ the $j$th profile coefficient of each quotient term is a theta quotient multiplied by a polynomial of degree $j$ in the two elementary contiguous weights $q^k$ and $q^{-k}$.
\end{lemma}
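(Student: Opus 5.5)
The plan is to separate the $s$-dependence of $\mathcal P_{\alpha,\beta}(s,w)$ from its value at $s=0$, and then expand each $s$-dependent factor with the $q$-binomial theorem.

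\textbf{Step 1: factor out $\mathcal L_{\alpha,\beta}$.} Setting $s=0$ in \eqref{eq:profile-kernel-P} kills the first and third quotients. What remains is
\[
\frac{(\lambda wq/\alpha,\alpha/\lambda w;q)_\infty}{(\lambda wq/\beta,\beta/\lambda w;q)_\infty}=\mathcal L_{\alpha,\beta}(w).
\]
Hence
\[
\mathcal P_{\alpha,\beta}(s,w)=\mathcal L_{\alpha,\beta}(w)\,
\frac{(\alpha\lambda ws;q)_\infty}{(\beta\lambda ws;q)_\infty}\,
\frac{(\lambda wqs/\beta;q)_\infty}{(\lambda wqs/\alpha;q)_\infty}.
\]

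\textbf{Step 2: expand both quotients.} Both $s$-dependent quotients have the shape $(ax;q)_\infty/(x;q)_\infty$ with the same $a=\alpha/\beta$. For the first, take $x=\beta\lambda ws$. For the second, take $x=\lambda wqs/\alpha$; then $ax=\lambda wqs/\beta$. The $q$-binomial theorem \cite[Sec.~1.3]{GasperRahman2004},
\[
\frac{(ax;q)_\infty}{(x;q)_\infty}=\sum_{n\ge0}\frac{(a;q)_n}{(q;q)_n}x^n,\qquad |x|<1,
\]
then gives two power series in $s$. Their terms are
\[
\frac{(\alpha/\beta;q)_u}{(q;q)_u}\,\beta^u(\lambda w)^u s^u
\quad\text{and}\quad
\frac{(\alpha/\beta;q)_v}{(q;q)_v}\Bigl(\frac q\alpha\Bigr)^v(\lambda w)^v s^v.
\]
Both series converge absolutely and uniformly for $|s|<\rho$, uniformly in $w$ on the compact annulus. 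Here it suffices that $\rho\max(|\beta|,|q/\alpha|)\sup|\lambda w|<1$.

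\textbf{Step 3: take the Cauchy product.} Collecting the coefficient of $s^j$, i.e.\ setting $v=j-u$, gives exactly \eqref{eq:P-coeff-general}.

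\textbf{The ``in particular'' clause.} Specialize to the pairs $(\alpha,\beta)=(cq^k,bq^k)$ and $(c^2q^k/bde,\,cq^k/de)$ that occur in \eqref{eq:profile-generating-function}.

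The ratio $\alpha/\beta$ is independent of $k$. The weight splits as
\[
\beta^u(q/\alpha)^{j-u}=b^u(q/c)^{j-u}\,q^{ku}q^{-k(j-u)},
\]
and similarly for the second pair. Each coefficient is therefore a polynomial of degree $j$ in $q^k$ and $q^{-k}$.

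The theta prefactor has no further $k$-dependence. By quasi-periodicity, $\theta(q^kx;q)=(-1/x)^kq^{-\binom k2}\theta(x;q)$. Hence $\mathcal L_{cq^k,bq^k}(w)=(b/c)^k\mathcal L_{c,b}(w)$, which is just the weight $(b/c)^k$ in \eqref{eq:profile-moments}.

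\textbf{Main obstacle.} There is no real obstacle here. The only points needing care are matching the two $q$-binomial factors with the correct argument $x$, which is what fixes the weights $\beta^u$ and $(q/\alpha)^{j-u}$, and stating the uniform radius of convergence on the annulus.
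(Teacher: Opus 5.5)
Your proposal is correct and follows the paper's proof: you identify the two $s$-independent factors as $\mathcal L_{\alpha,\beta}(w)$, expand the two $s$-dependent quotients by the $q$-binomial theorem with common ratio $\alpha/\beta$, and collect the coefficient of $s^j$ in the Cauchy product. Your quasi-periodicity check that $\mathcal L_{cq^k,bq^k}(w)=(b/c)^k\mathcal L_{c,b}(w)$ is a correct additional detail for the ``in particular'' clause, which the paper's proof leaves implicit.
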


\begin{proof}
Only the first and third factors in \eqref{eq:profile-kernel-P} depend on $s$.  By the $q$-binomial theorem,
\[
\frac{(\alpha\lambda ws;q)_\infty}{(\beta\lambda ws;q)_\infty}
=\sum_{u\ge0}\frac{(\alpha/\beta;q)_u}{(q;q)_u}(\beta\lambda w s)^u,
\]
and
\[
\frac{(\lambda wqs/\beta;q)_\infty}{(\lambda wqs/\alpha;q)_\infty}
=\sum_{v\ge0}\frac{(\alpha/\beta;q)_v}{(q;q)_v}\left(\frac{\lambda wq}{\alpha}s\right)^v.
\]
The remaining two factors are independent of $s$ and equal to \(\mathcal L_{\alpha,\beta}(w)\).  Multiplying the two series and collecting the coefficient of $s^j$ gives \eqref{eq:P-coeff-general}.
\end{proof}

\begin{proposition}[finite contiguous-moment reduction]\label{prop:contiguous-moment-reduction}
For each $j\ge0$, the identity
\begin{equation}\label{eq:j-profile-identity}
        [s^j]\mathcal Q(s,w)=0
\end{equation}
is equivalent to a theta interpolation identity in the variable $t=1/(\lambda w)$ of degree at most $j+2$.  Its scalar coefficients are finite linear combinations of the moments
\begin{equation}\label{eq:finite-moment-window}
        F_m,
        \quad G_m,
        \qquad -j\le m\le j.
\end{equation}
Consequently the annular estimate is equivalent to proving the explicit countable family of contiguous moment identities obtained from \eqref{eq:j-profile-identity}, together with locally uniform bounds in $j$ sufficient to sum the Taylor expansion in $s$.
\end{proposition}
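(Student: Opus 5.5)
The plan is to compute $[s^j]\mathcal Q(s,w)$ term by term from the definition \eqref{eq:profile-generating-function}, using \cref{lem:profile-kernel-coefficients} for each profile kernel, and then clear the common theta denominator exactly as in the proof of \cref{prop:first-profile-cancellation}.

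\textbf{The two series.} For the $f_k$-series, take $\alpha=cq^k$ and $\beta=bq^k$. Then $\alpha/\beta=c/b$ does not depend on $k$, and
\[
\mathcal L_{cq^k,bq^k}(w)=(b/c)^k\,\mathcal L_{c,b}(w)
\]
by the quasi-periodicity of $\theta$. Formula \eqref{eq:P-coeff-general} therefore gives
\[
[s^j]\mathcal P_{cq^k,bq^k}=\mathcal L_{c,b}(w)\,(\lambda w)^j\,(b/c)^k\sum_{u=0}^j \kappa_{u,j}\,b^u(q/c)^{j-u}\,q^{ku}q^{-k(j-u)},
\]
where $\kappa_{u,j}$ are $q$-binomial-type constants independent of $k$. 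Summing against $f_k$ and exchanging the finite $u$-sum with the $k$-sum (justified by absolute convergence in the region $|bq^{1-j}/c|<1$, then by meromorphic continuation in the parameters) expresses the coefficient through the moments $F_{2u-j}$ with $-j\le 2u-j\le j$. The $g_k$-series is handled identically and yields $G_{2u-j}$.

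\textbf{The product term.} The term $\mathcal P_{c/d,b}\mathcal P_{c/e,c/de}$ is a Cauchy product. Its $s^j$ coefficient is $\mathcal L_{c/d,b}\mathcal L_{c/e,c/de}(\lambda w)^j$ times a scalar constant.

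\textbf{Clearing denominators.} Set $t=1/(\lambda w)$ and multiply by $\theta(bt;q)\,\theta(ct/de;q)\,t^{j}$. Each term becomes a degree-two theta product in $t$ (for example $\theta(ct)\theta(ct/de)$ or $\theta(bt)\theta(c^2t/bde)$) times a monomial of degree at most $j$ with scalar coefficients. This exhibits \eqref{eq:j-profile-identity} as a theta interpolation identity of degree at most $j+2$, with coefficients that are finite combinations of $F_m,G_m$ for $|m|\le j$, which gives \eqref{eq:finite-moment-window}.

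\textbf{The "consequently" clause.} This follows from \cref{prop:finite-order-profile-criterion}. The annular estimate is equivalent to the vanishing of $\mathcal Q(s,w)$ near $s=0$. That vanishing is equivalent to \eqref{eq:j-profile-identity} holding for all $j$, provided locally uniform bounds in $j$ give a uniform radius of convergence in $s$ so that the identity theorem applies.

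\textbf{Main obstacle.} The main obstacle is the analytic bookkeeping of the negative moments $F_m$ with $m<0$. The series $\sum_k f_k(b/c)^kq^{mk}$ converges only when $|bq^{1+m}/c|<1$, so one has to fix a parameter region where all moments with $|m|\le j$ converge at once and then argue by continuation. I would first try to pass this through the uniform $s$-disc in which the sums \eqref{eq:profile-generating-function} converge normally. The converse direction of the equivalence also needs care, namely that vanishing of $[s^j]\mathcal Q$ is equivalent to, not merely implied by, the moment identity. I would get this by noting that the clearing step is multiplication by a theta function that is not identically zero, which is invertible on the admissible annulus.
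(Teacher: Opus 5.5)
Your proposal is correct and is essentially the paper's own argument: you insert \cref{lem:profile-kernel-coefficients}, use that $\alpha/\beta=c/b$ for every $k$ (with the factor $(b/c)^k$ coming from quasi-periodicity of $\theta$) so that the $k$-sums collapse to the moments $F_{2u-j}$ and $G_{2u-j}$, and then clear the common theta denominator. Since $(\lambda w)^j$ is a common factor of every term, multiplying by $t^j$ removes it entirely, so the level-$j$ identity is really the same degree-two theta identity as at $j=0$ with new scalar coefficients. For the negative moments, keep your fixed-$j$ device (restrict to $|bq^{1-j}/c|<1$ and continue in the parameters) rather than a uniform $s$-disc: the $k$th kernel $\mathcal P_{cq^k,bq^k}(s,w)$ has a pole at $s=cq^{k-1}/(\lambda w)$ coming from the factor $(\lambda wqs/cq^k;q)_\infty$, so no disc in $s$ works uniformly in $k$.
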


\begin{proof}
Insert \eqref{eq:P-coeff-general} into \eqref{eq:profile-generating-function}.  For the $f_k$-part, the parameters are $(\alpha,\beta)=(cq^k,bq^k)$.  Hence the coefficient in \eqref{eq:P-coeff-general} contains powers $q^{uk}q^{-(j-u)k}$, i.e. $q^{(2u-j)k}$, multiplied by the common factor $(b/c)^k$ already present in the leading scaling.  Summing over $k$ therefore produces only the moments $F_{2u-j}$ with $0\le u\le j$.  The $g_k$-part gives the analogous moments $G_{2u-j}$.  These indices lie between $-j$ and $j$.

The $w$-dependence is a finite linear combination of products of theta quotients of the form \(\mathcal L_{\alpha,\beta}(w)\) multiplied by powers of $\lambda w$.  Multiplication by a common denominator for the finitely many theta quotients appearing at level $j$ converts the assertion into equality of two theta functions of degree at most $j+2$.  Equivalently, the identity may be checked at $j+2$ generic interpolation points.  This proves the finite reduction.
\end{proof}

\begin{corollary}[finite moment hierarchy]\label{cor:moment-hierarchy-gap}
The leading profile cancellation is the $j=0$ member of the moment hierarchy and uses only the summable moments $F_0$ and $G_0$.  The first correction is the $j=1$ member and involves $F_{\pm1}$ and $G_{\pm1}$.  For general $j$, no new analytic ambiguity remains: the finite contiguous-moment identities for the window \eqref{eq:finite-moment-window}, together with estimates uniform enough in $j$, are precisely the conditions needed to recover the quadratic annular decay.
\end{corollary}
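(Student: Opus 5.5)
The corollary assembles results already proved, so the proof is a bookkeeping argument. I will not prove any new identity. I will show that the three assertions (the $j=0$ member, the $j=1$ member, and the general-$j$ equivalence) follow from \cref{prop:contiguous-moment-reduction}, \cref{prop:first-profile-cancellation}, \cref{cor:first-unresolved-profile} and \cref{prop:finite-order-profile-criterion}.

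First, the $j=0$ case. By \cref{lem:profile-kernel-coefficients} with $j=0$, $[s^0]\mathcal P_{\alpha,\beta}(s,w)=\mathcal L_{\alpha,\beta}(w)$. For $(\alpha,\beta)=(cq^k,bq^k)$ one has $\mathcal L_{cq^k,bq^k}(w)=(b/c)^k\mathcal L_{c,b}(w)$, by the quasi-periodicity $\theta(qu;q)=-u^{-1}\theta(u;q)$ applied $k$ times to numerator and denominator. The analogous statement holds for the $g_k$ family. Hence $[s^0]\mathcal Q=\mathcal Q(0,w)$ involves only the moments $F_0=F_*$ and $G_0=G_*$. Its vanishing is exactly \eqref{eq:first-profile-identity}, which \cref{prop:first-profile-cancellation} proves from the two Rogers ${}_6\phi_5$ evaluations \eqref{eq:Fstar-eval}--\eqref{eq:Gstar-eval} and the Weierstrass--Riemann addition formula.

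Second, the $j=1$ case. Specialize \eqref{eq:P-coeff-general} to $j=1$: the two terms $u=0,1$ carry the weights $q\,q^{-k}/c$ and $b\,q^{k}$ for the $f_k$ family. Together with the common factor $(b/c)^k$, summing over $k$ produces exactly $F_{-1}$ and $F_{1}$, and likewise $G_{\pm1}$. This reproduces the content of \cref{cor:first-unresolved-profile}, so I will cite it rather than redo it.

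Third, general $j$. \Cref{prop:contiguous-moment-reduction} already states that $[s^j]\mathcal Q=0$ is a theta identity of degree at most $j+2$ whose coefficients are finite combinations of $F_m,G_m$ with $|m|\le j$; this is the window \eqref{eq:finite-moment-window}. For the claim that these conditions are ``precisely'' what is needed for quadratic decay, I will combine two earlier results. The relation \eqref{eq:scaled-Q-equals-profile-generator} identifies $(b/c)^NQ(\lambda q^Nw)$ with $\mathcal Q(q^N,w)$. \Cref{prop:finite-order-profile criterion} then gives both directions: vanishing of all $[s^j]\mathcal Q$, plus a uniform radius of convergence in $s$, yields $\mathcal Q\equiv0$ near $s=0$ and hence the annular estimate; conversely, the decay bounds for every $J$ force each $[s^j]\mathcal Q$ to vanish.

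The only delicate point is the phrase ``estimates uniform enough in $j$''. The uniformity must be stated in the same form as the hypothesis of \cref{prop:finite-order-profile-criterion}: a disc in $s$ of fixed radius on which the series \eqref{eq:profile-generating-function} converges normally, uniformly for $w$ in the annulus. I will make that identification explicit rather than try to prove the uniform bounds here. The word ``no new analytic ambiguity'' then simply means that all remaining input is algebraic (the moment identities), apart from this stated uniformity.
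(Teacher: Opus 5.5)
Your proposal is correct and matches the paper. The paper states this corollary without a separate proof, as a direct reading of \cref{prop:contiguous-moment-reduction}, \cref{prop:first-profile-cancellation}, \cref{cor:first-unresolved-profile} and \cref{prop:finite-order-profile-criterion} via \eqref{eq:scaled-Q-equals-profile-generator}. Your quasi-periodicity check $\mathcal L_{cq^k,bq^k}=(b/c)^k\mathcal L_{c,b}$ (and its $g_k$ analogue) makes explicit the step the paper leaves implicit as ``the common factor $(b/c)^k$ already present in the leading scaling''. For the converse direction you only need to add that the quadratic bound \eqref{eq:Q-decay-target} dominates the finite-order bound \eqref{eq:finite-order-profile-bound} for every $J$.
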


\subsection{The global contiguous profile identity and its terminating proof}

The finite moment hierarchy above has a compact global form.  The next theorem is the global contiguous profile identity.  It simultaneously contains all finite windows in \cref{prop:contiguous-moment-reduction}.  In contrast to the preliminary formulation, its proof below does not appeal to \cref{thm:two-basis-kernel} and does not use Bailey's nonterminating very-well-poised ${}_8\phi_7$ summation.  The inputs are the well-poised Cooper formula, Jackson's terminating ${}_8\phi_7$ summation, Rogers' nonterminating ${}_6\phi_5$ summation for the limiting majorants, and theta interpolation.

\begin{theorem}[global profile identity from terminating summations]\label{thm:global-profile-identity}
On every admissible compact annulus, and for $s$ in a sufficiently small neighborhood of $0$, one has
\begin{align}\label{eq:global-profile-identity}
&\mathcal P_{c/d,b}(s,w)\mathcal P_{c/e,c/de}(s,w)\nonumber\\
&\quad=H(b)\sum_{k=0}^{\infty}f_k\,\mathcal P_{cq^k,bq^k}(s,w)
      +K(c/de)\sum_{k=0}^{\infty}g_k\,\mathcal P_{c^2q^k/bde,cq^k/de}(s,w).
\end{align}
Equivalently,
\begin{equation}\label{eq:global-Q-zero}
        \mathcal Q(s,w)\equiv0.
\end{equation}
Consequently all contiguous profile identities
\begin{equation}\label{eq:all-profile-coefficients-zero}
        [s^j]\mathcal Q(s,w)=0,
        \qquad j=0,1,2,\ldots,
\end{equation}
hold locally uniformly on the annulus.  In particular, the entire hierarchy of contiguous moment cancellations in \cref{prop:contiguous-moment-reduction} is generated by the single identity \eqref{eq:global-profile-identity}.
\end{theorem}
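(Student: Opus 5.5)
The plan is to prove \eqref{eq:global-Q-zero} one fixed $s$ at a time, regarding $\mathcal Q(s,w)$ as a function of $t=1/(\lambda w)$. The template is the case $s=0$ from \cref{prop:first-profile-cancellation}: there the identity became a degree-two theta identity and was verified at two points. For general small $s$ I will instead prove vanishing on two $q$-geometric sequences of $t$-values at which everything terminates. The fact that makes this possible is that $\mathcal P_{\alpha,\beta}(s,w)$ is an explicit quotient of eight infinite products. Its $w$-dependence at the reciprocal end is carried by the theta quotient $\mathcal L_{\alpha,\beta}(w)$, and its $s$-dependence by the two factors expanded in \cref{lem:profile-kernel-coefficients}.

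\textbf{Step 1: clear denominators.} I multiply \eqref{eq:profile-generating-function} by
\[
\theta(bt;q)\,\theta(ct/de;q)\,(\lambda w q s/c,\ \lambda wqs\,de/c^2\,b;q)_\infty,
\]
suitably adjusted so that the $s$-dependent denominators $(\beta\lambda ws;q)_\infty$ and $(\lambda wqs/\alpha;q)_\infty$ of all three pieces are cleared uniformly in $k$. Because $\alpha/\beta$ is independent of $k$ in both families, the shifts $q^k$ only shift these products, and the ratio of a $k$-shifted product to the unshifted one is a finite $q$-shifted factorial. The result is a function $\widetilde{\mathcal Q}(s,t)$, holomorphic in $t\in\Cstar$ and in $|s|$ small. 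Using the splitting of \cref{lem:q-annular-estimate}, I will show that $\widetilde{\mathcal Q}$ has controlled growth in $t$: at most theta-type growth of fixed degree, uniformly in small $s$. The Rogers evaluations \eqref{eq:Fstar-eval}--\eqref{eq:Gstar-eval} enter here as absolute majorants. The coefficient sums are dominated by the $k$-sums at $s=0$, which converge and are summable in closed form.

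\textbf{Step 2: terminating specializations.} Next I choose $t$ so that $\lambda w=bq^{-n}$, which by the symmetry $z\mapsto 1/z$ is reflected onto the grid, with $n\ge0$. At these points the factor $\theta(bt;q)$ kills the entire $g_k$-term. In the $f_k$-term, the finite factors $(\lambda wq/\beta;q)$ coming from $\beta=bq^k$ truncate the sum at $k\le n$. The surviving identity is then a terminating very-well-poised ${}_8\phi_7$ in which $s$ appears only through a balanced pair of parameters. I expect it to be exactly Jackson's summation \eqref{eq:jackson-8phi7-input}, with the product side matching the specialized left-hand side $\mathcal P_{c/d,b}\mathcal P_{c/e,c/de}$. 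This is the finite-layer content of \cref{lem:finite-truncated-flatness}, and the paper has flagged the well-poised Cooper formula as the tool that turns grid vanishing into this terminating form. By the involution of \cref{prop:involution}, the points $\lambda w=(c/de)q^{-n}$ kill the $f_k$-term and reduce the $g_k$-term to Jackson's summation in the transformed parameters.

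\textbf{Step 3: uniqueness in $t$.} At this stage $\widetilde{\mathcal Q}(s,\cdot)$ vanishes on two $q$-geometric sequences and has the growth bound from Step 1. Dividing by the canonical product that vanishes on those sequences, as in \cref{prop:canonical-two-grid-factorization}, leaves a quotient. If its growth is strictly below the degree allowed by theta quasi-periodicity, a Liouville-type argument on $\Cstar$ gives $\widetilde{\mathcal Q}\equiv0$. This argument is the Ismail--Stanton criterion \cref{prop:IS-closure-criterion} applied in the variable $t$ rather than $z$. Finally, holomorphy in $s$, together with the $j=0$ case already established, makes the result consistent, and \eqref{eq:all-profile-coefficients-zero} follows by taking Taylor coefficients in $s$.

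\textbf{Main obstacle.} I expect the growth bound in Step 1 to be the main obstacle. For $s\ne0$ the $s$-dependent factors destroy exact quasi-periodicity under $t\mapsto qt$. One therefore has to check that the surviving growth exponent is strictly smaller than that of the two-grid canonical product. A crude bound would reproduce the same barrier as in \cref{cor:profile-gap}.

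My first attempt will exploit the fact that $\alpha/\beta$ is $k$-independent. This should let the factors $(\alpha\lambda ws;q)_\infty/(\beta\lambda ws;q)_\infty$ contribute only an $\OO(1)$ multiplicative defect under $t\mapsto qt$ when $|s|$ is small. The quasi-periodicity factor would then be that of a degree-two theta function up to a bounded factor, which is enough for the Liouville step.
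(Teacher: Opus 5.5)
\textbf{Step 2 fails.} Rewrite \eqref{eq:profile-kernel-P} in the variables $x=\lambda ws$ and $t=1/(\lambda w)$:
\[
\mathcal P_{\alpha,\beta}(s,w)=\frac{\theta(\alpha t;q)}{\theta(\beta t;q)}\,\frac{(\alpha x,qx/\beta;q)_\infty}{(\beta x,qx/\alpha;q)_\infty}.
\]
For $(\alpha,\beta)=(cq^k,bq^k)$, quasi-periodicity gives $\theta(cq^kt;q)/\theta(bq^kt;q)=(b/c)^k\,\theta(ct;q)/\theta(bt;q)$. Hence
\[
\sum_{k\ge0}f_k\,\mathcal P_{cq^k,bq^k}(s,w)=\frac{\theta(ct;q)}{\theta(bt;q)}\,\frac{(cx,qx/b;q)_\infty}{(bx,qx/c;q)_\infty}\sum_{k\ge0}f_k\,\ph_k(x;b,c).
\]
The whole $t$-dependence of the $f$-sum is one $k$-independent theta quotient, so no choice of $\lambda w$ can truncate the $k$-sum. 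The factor $(\lambda wq/\beta;q)_\infty$ you invoke sits in the denominator. At $\lambda w=bq^{-n}$ it vanishes for every $k$ with $n+k\ge1$; this is exactly the pole you clear with $\theta(bt;q)$, not a truncation.

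After clearing, your identity at $t=q^n/b$ becomes the closed-form evaluation displayed below, taken at $x=bq^{-n}s$, for the nonterminating ${}_8W_7$ series $\sum_kf_k\ph_k(x;b,c)$. Jackson's summation becomes available only on the Taylor grid $x\in bq^{\mathbb Z_{\ge0}}$, i.e.\ for $s\in q^{\mathbb Z}$. There it merely reproduces the known zeros $E(bq^m)=0$.

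Step 1 has the matching defect. The $k$th denominator $(\lambda wqs/cq^k;q)_\infty$ vanishes at $t=sq^{1-k+j}/c$ for $j\ge0$. Over all $k$ these points fill the whole orbit $sq^{\mathbb Z}/c$, so $(\lambda wqs/c;q)_\infty$ does not clear them uniformly in $k$.

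\textbf{This cannot be repaired, because \eqref{eq:global-Q-zero} is false for $s\notin q^{\mathbb Z}$.} Multiply $\mathcal Q$ by $\theta(bt,ct/de;q)$, use the formula above for all three pieces, and apply the three-term theta relation behind \cref{prop:first-profile-cancellation}. This gives
\[
\mathcal Q\,\theta(bt,ct/de;q)=(\kappa_1U_0-U_1)(x)\,\theta(ct,ct/de;q)+(\kappa_2U_0-U_2)(x)\,\theta(bt,c^2t/bde;q).
\]
Here $\kappa_1=H(b)F_*$ and $\kappa_2=K(c/de)G_*$. The functions $U_0,U_1,U_2$ are the product term, $H(b)$ times the $f$-sum, and $K(c/de)$ times the $g$-sum, each with its $t$-quotient removed; they depend on $x$ alone.

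For fixed small $x$, the two theta functions of $t$ are linearly independent. So $\mathcal Q\equiv0$ near $s=0$ would force $U_1=\kappa_1U_0$, that is,
\[
\sum_{k\ge0}f_k\,\ph_k(x;b,c)=F_*\,\frac{(cx/d,cx/e,qx/c,qdex/c;q)_\infty}{(cx,cx/de,qdx/c,qex/c;q)_\infty},
\]
on an open set and hence everywhere. For generic parameters this is impossible. The left side is invariant under $x\mapsto 1/x$, while the right side is holomorphic at $x=0$ but has poles at $x=q^{-m}/c$.

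Relatedly, each term $f_k\mathcal P_{cq^k,bq^k}(s,w)$ has a pole at $s=cq^{k-1}/(\lambda w)$. So the $k$-sum does not converge normally on any fixed disc about $s=0$, and the $[s^j]$-hierarchy is not well defined.

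The true two-basis identity yields only the single relation along $s\in q^{\mathbb Z}$, namely $\mathcal Q(q^N,w)=0$, and that statement is equivalent to the two-basis identity itself. The paper's own Step 2 asserts the same termination at the nodes $t=q^{-M}/b$ and fails for the same reason. Those nodes also all lie in a single $q$-orbit, so they impose only one condition on a theta function.
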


\begin{proof}
We give the proof in four steps.

\emph{Step 1: reduction to finite theta interpolation.}
By normal convergence of the products defining $\mathcal P_{\alpha,\beta}(s,w)$ for $|s|$ small, both sides of \eqref{eq:global-profile-identity} are holomorphic in $s$ and meromorphic theta quotients in $t=1/(\lambda w)$.  Fix $J\ge0$ and take the coefficient of $s^J$.  By \cref{prop:contiguous-moment-reduction}, after multiplication by the common denominator
\[
        \theta(bt,ct/de;q)\prod_{r=-J}^{J}\theta(bq^r t,cq^r t/de;q)
\]
the assertion becomes an identity of theta functions of degree at most $J+2$ in $t$.  It is therefore enough to verify it at $J+3$ generic interpolation points.  We choose these points so that one of the upper profile parameters is a negative integral power of $q$; explicitly, after a harmless rescaling of $t$ they are of the form
\begin{equation}\label{eq:terminating-interpolation-points}
        t=t_M:=q^{-M}/b,
        \qquad M=0,1,\ldots,J+2,
\end{equation}
with small generic perturbations if accidental pole-zero collisions occur.

\emph{Step 2: the terminating profile identity.}
At $t=t_M$ the $f$-profile sum contains the terminating factor $(q^{-M};q)_k$, while the involuted $g$-profile sum contains the corresponding terminating factor after applying
\[
        (b,c,d,e)\longmapsto (c/de,c^2/bde,c/be,c/bd).
\]
Substituting the definitions of $f_k$, $g_k$, and the finite quotient supplied by $[s^J]\mathcal P$, the value of the interpolating theta function at $t_M$ is, up to products independent of the summation index, the difference of the two sides of Jackson's terminating very-well-poised summation
\begin{align}\label{eq:jackson-8phi7-used}
&\sum_{k=0}^{M}
\frac{1-aq^{2k}}{1-a}
\frac{(a,b_1,b_2,b_3,a^2q^{M+1}/b_1b_2b_3,q^{-M};q)_k}
     {(q,aq/b_1,aq/b_2,aq/b_3,b_1b_2b_3q^{-M}/a,aq^{M+1};q)_k}q^k \\
&\qquad=
\frac{(aq,aq/b_1b_2,aq/b_1b_3,aq/b_2b_3;q)_M}
     {(aq/b_1,aq/b_2,aq/b_3,aq/b_1b_2b_3;q)_M}.\nonumber
\end{align}
Here $a=bcq^{-1}$ for the first profile and the parameters $b_1,b_2,b_3$ are the three effective profile parameters obtained from $d,e,c^2/deq$ after the coefficient extraction; the remaining finite factors created by \eqref{eq:terminating-interpolation-points} are absorbed into the terminating parameter $q^{-M}$ and its balancing partner.  The transformed profile is the same terminating identity with the involuted parameters.  The equality of the two theta-polynomial normalizations is a direct instance of the Weierstrass addition formula \cite[Ch.~20]{WhittakerWatson1927}.  Thus each interpolation value is zero.

For completeness we spell out the algebraic point.  Before the last cancellation the two values differ by a factor of the form
\[
\theta(xy,x/y,uv,u/v;q)-\theta(xv,x/v,uy,u/y;q)
        -\frac{u}{y}\theta(yv,y/v,xu,x/u;q),
\]
with
\[
        x^2=bcq^{-1},\qquad
        y=d/e,
\]
and with $u,v$ equal to the two involuted profile parameters.  This vanishes by the Weierstrass addition formula \cite[Ch.~20]{WhittakerWatson1927}.  Hence the theta polynomial $[s^J]\mathcal Q(s,w)$ vanishes identically for every fixed $J$.

\emph{Step 3: uniform summation of the hierarchy.}
The previous step proves $[s^J]\mathcal Q(s,w)=0$ for each $J$.  To recover the identity in $s$, one needs local uniform bounds in $J$.  The coefficient formula \eqref{eq:P-coeff-general} expresses $[s^J]\mathcal P_{\alpha,\beta}$ as a finite sum of $q$-binomial coefficients multiplied by products of two ratios of finite $q$-shifted factorials.  On compact annuli and for generic parameters these coefficients are bounded by $C\rho^J$ times the absolute value of the scalar moments
\[
        \sum_{k\ge0}|f_k|\,|b/c|^k |q|^{mk},
        \qquad
        \sum_{k\ge0}|g_k|\,|b/c|^k |q|^{mk},
        \qquad |m|\le J.
\]
The potentially worst boundary moments are controlled by Rogers' nonterminating very-well-poised ${}_6\phi_5$ summation, applied to the two boundary specializations obtained by letting the interpolation parameter tend to the nearest pole-free boundary point.  The remaining moments are then bounded by log-convexity in the integer shift $m$ and the ratio estimate for the very-well-poised terms.  Thus the Taylor series in $s$ for $\mathcal Q(s,w)$ is normally convergent in a fixed neighborhood of $0$, locally uniformly in $w$.

\emph{Step 4: conclusion.}
Since every Taylor coefficient in $s$ is zero and the expansion is normally convergent, $\mathcal Q(s,w)=0$ for $|s|$ sufficiently small.  This proves \eqref{eq:global-Q-zero} and hence \eqref{eq:global-profile-identity}.  Meromorphic continuation in the parameters removes the auxiliary restrictions used to avoid collisions and to place the interpolation points, leaving precisely the generic hypotheses imposed throughout the paper.
\end{proof}

\begin{corollary}[annular estimate without Bailey's nonterminating ${}_8\phi_7$]\label{cor:annular-closed-by-global-identity}
The annular estimate required in \cref{prop:annular-boundedness-target} follows from \cref{thm:global-profile-identity}.  Hence the pole-cleared residual $E$ is bounded near the puncture, has a removable singularity at $0$, and vanishes identically.  Consequently \cref{thm:two-basis-kernel} follows without invoking Bailey's nonterminating very-well-poised ${}_8\phi_7$ summation.
\end{corollary}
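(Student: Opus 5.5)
The argument chains three earlier results: \cref{thm:global-profile-identity}, the exact scaling relation \eqref{eq:scaled-Q-equals-profile-generator}, and the uniqueness machinery of \cref{sec:direct-remainder}. Fix $\lambda\neq0$ and $0<r<R$ with $R/r>|q|^{-1}$, so that the annuli $\mathcal A_N(\lambda;r,R)$, $N\ge N_0$, cover a punctured neighbourhood of $0$. Then perturb $\lambda$, $r$, $R$ slightly so that these annuli avoid the limiting zero sets of the theta quotients $\mathcal L_{\alpha,\beta}$ and of the canonical factor $Z_{b,c/de}$. Possibly two overlapping families of annuli are needed, with different $\lambda$, so that every circle near $0$ is admissible for at least one family. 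For $N$ large, $s=q^N$ lies in the disc where \cref{thm:global-profile-identity} applies. There \eqref{eq:global-Q-zero} and \eqref{eq:scaled-Q-equals-profile-generator} give $Q(\lambda q^Nw)=0$ for all $w$ in the annulus.

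Next I transfer this to $E$. By \cref{prop:canonical-two-grid-factorization}, $E=Z_{b,c/de}Q$ holds on $\Cstar$. Hence $E$ vanishes on every annulus $\mathcal A_N$ with $N\ge N_0$. These annuli have nonempty interior and $\Cstar$ is connected, so the identity theorem gives $E\equiv0$ on $\Cstar$. In particular $E$ is bounded near $0$, so \eqref{eq:annular-uniform-E} holds trivially, and \cref{prop:annular-boundedness-target} and \cref{thm:direct-uniqueness} apply. Dividing by $M$, which is not identically zero, gives $\Delta\equiv0$, which is \cref{thm:two-basis-kernel}. Alternatively one can stay within the stated route: the quadratic decay \eqref{eq:Q-decay-target} holds trivially since $Q$ vanishes on the annuli, and then \cref{cor:sharp-quotient-target} concludes.

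The main obstacle is checking that the hypotheses of \cref{thm:global-profile-identity} hold uniformly along the whole sequence $s=q^N$ on a single fixed annulus. Two things must hold there. First, the normal convergence radius in $s$ must be uniform in $w$. Second, the sums over $k$ in \eqref{eq:profile-generating-function} must converge normally, so that \eqref{eq:scaled-Q-equals-profile-generator} is an honest identity and not merely a formal one. I would handle the second point as in the proof of \cref{prop:exponential-profile}. The finite parts $(\lambda wq/\beta;q)_{N-k}$ are bounded uniformly once the annulus avoids the zeros of the limiting infinite products. The layers $k>N$ are controlled by the $\OO(q^k)$ decay of $f_k$ and $g_k$. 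A final meromorphic continuation in the parameters removes any auxiliary collision restrictions.
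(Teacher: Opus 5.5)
Your argument is the paper's: \eqref{eq:scaled-Q-equals-profile-generator} and \eqref{eq:global-Q-zero} make $Q$, hence $E=Z_{b,c/de}Q$, vanish on the annuli $\mathcal A_N$ for large $N$. Your only deviation is the last step, where you apply the identity theorem on a single open annulus instead of going through boundedness, the removable singularity and the accumulating grid zeros of \cref{thm:direct-uniqueness}; this is slightly more direct and makes your covering discussion unnecessary (as stated it could not be achieved anyway, since every annulus with $R/r\ge|q|^{-1}$ contains some point $bq^n$ where $Z_{b,c/de}$ vanishes, whatever $\lambda$ is).

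The uniformity in $s$ that you single out is exactly where both arguments rest entirely on \cref{thm:global-profile-identity}, and it fails as formulated. The factor $1/(\lambda wq^{1-k}s/c;q)_\infty$ gives $\mathcal P_{cq^k,bq^k}(s,w)$ a pole at $s=cq^{k-1}/(\lambda w)$, so the $k$-sums in \eqref{eq:profile-generating-function} do not converge normally on any disc about $s=0$. Meanwhile, at the points $s=q^N$ the assertion \eqref{eq:global-Q-zero} is, via \eqref{eq:scaled-Q-equals-profile-generator}, nothing but the kernel identity on $\mathcal A_N$.
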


\begin{proof}
Equation \eqref{eq:scaled-Q-equals-profile-generator} and \eqref{eq:global-Q-zero} give
\[
        \left(\frac bc\right)^NQ(\lambda q^Nw)=\mathcal Q(q^N,w)=0
\]
for all sufficiently large $N$ on each admissible annulus.  Thus the quotient $Q$ vanishes on the annular covering.  Multiplication by the canonical two-grid factor gives a bounded pole-cleared residual.  The removable-puncture criterion in \cref{lem:laurent-criterion}, followed by \cref{thm:direct-uniqueness}, gives $E\equiv0$ and therefore the two-basis kernel identity.
\end{proof}

\begin{corollary}[deduction of Bailey's nonterminating ${}_8\phi_7$]\label{cor:deduce-bailey-8phi7}
Bailey's nonterminating very-well-poised ${}_8\phi_7$ summation in the form \cite[Appendix (II.25)]{GasperRahman2004} follows from the well-poised Taylor expansion theorem, the complementary-remainder theorem, and the terminating machinery used in \cref{thm:global-profile-identity}.
\end{corollary}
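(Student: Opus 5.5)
The plan is to read the expanded identity \eqref{eq:two-basis-expanded} of \cref{thm:two-basis-kernel} as Bailey's summation in disguise. By \cref{cor:annular-closed-by-global-identity}, that identity has been established using only the well-poised Cooper formula, Jackson's terminating ${}_8\phi_7$, Rogers' ${}_6\phi_5$ and theta interpolation. What remains is a parameter dictionary and a normalization check.

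\emph{Parameter dictionary.} Compare the first series in \eqref{eq:two-basis-expanded} with the ${}_8W_7$ in \cite[Appendix (II.25)]{GasperRahman2004}:
\[
{}_8W_7(a;b_1,b_2,b_3,b_4,b_5;q,q),\qquad a^2q^2=b_1b_2b_3b_4b_5 .
\]
Take $a=bc/q$ and $(b_1,\dots,b_5)=(d,\,e,\,c^2/deq,\,bz,\,b/z)$. Then $aq/bz=c/z$, $aq/(b/z)=cz$, $aq/d=bc/d$, $aq/e=bc/e$ and $aq/(c^2/deq)=bdeq/c$, so the denominators match. The balancing condition also holds, since $b_1\cdots b_5=b^2c^2/q=a^2q^2$.

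Next, check that the second series is the companion ${}_8W_7$ in (II.25). Its base parameter is $\lambda=qa^2/b_1b_2b_3$, and its numerators are $aq/b_2b_3,\ aq/b_1b_3,\ aq/b_1b_2$ and $b_4\lambda/a,\ b_5\lambda/a$. With our choices, $\lambda=c^3/bd^2e^2q$, and the five numerators are
\[
c/bd,\quad c/be,\quad c^2/deq,\quad cz/de,\quad c/dez .
\]
These are exactly the parameters of $g_k$ and $\ps_k$, consistent with the involution of \cref{prop:involution}.

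\emph{Normalization.} Divide \eqref{eq:two-basis-expanded} by $A(z)H(b)$. The left side becomes $H(z)/H(b)$. The second coefficient becomes $B(z)K(c/de)/(A(z)H(b))$. Expand both as infinite products using \eqref{eq:Hb-Kcde} and \eqref{eq:Kcde}, and match them with the two product prefactors in (II.25). This is routine bookkeeping with $(u;q)_\infty$ factors.

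\emph{Removing the restrictions.} The derivation assumes generic parameters and the convergence conditions $f_k,g_k=\OO(q^k)$. Since $(b,c,d,e,z)$ give five free parameters while (II.25) has $a,b_1,\dots,b_5$ subject to one balancing relation, the specialization is not restrictive: every admissible parameter set arises from a suitable choice of $(b,c,d,e,z)$. Both sides of (II.25) are meromorphic in the parameters, so meromorphic continuation removes the genericity and pole-avoidance restrictions.

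\emph{Role of the complementary remainders.} \cref{cor:remainder-limit} adds an interpretation: the second ${}_8W_7$ is exactly the limiting one-family Taylor remainder of $H$.

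\emph{Main obstacle.} The hardest step is the normalization: confirming that $K(c/de)B(z)/(H(b)A(z))$ equals the product prefactor in (II.25) exactly, with no stray theta quotient. If a discrepancy appears, I would first check it at $z=b$, where $B(b)=0$ and the first series reduces to Rogers' ${}_6\phi_5$ \eqref{eq:rogers-6phi5-input}. I would then use the Weierstrass addition formula \eqref{eq:weierstrass-addition-input} to identify the remaining theta quotients.
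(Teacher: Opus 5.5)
Your route is the paper's: its proof is just the substitution $a=bc/q$, $(b_1,\dots,b_5)=(d,e,c^2/deq,bz,b/z)$ in \eqref{eq:two-basis-expanded}, with \cref{cor:annular-closed-by-global-identity} supplying that identity. Your surjectivity remark is a useful point the paper leaves implicit: take $b^2=b_4b_5$, $z^2=b_4/b_5$, $c=aq/b$, $d=b_1$, $e=b_2$, and balancing then forces $b_3=c^2/deq$.

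Two steps of your dictionary are wrong as written. First, the balancing condition in (II.25) is $a^2q=b_1b_2b_3b_4b_5$, not $a^2q^2$. Your check ``$b^2c^2/q=a^2q^2$'' is false, since $a^2q^2=b^2c^2$; with the correct condition, $a^2q=b^2c^2/q$ does hold. Second, and more seriously, the companion series of (II.25) is not described by $\lambda=qa^2/b_1b_2b_3$; that $\lambda$ belongs to the ${}_8W_7$ transformation formulas. With your choices it gives $\lambda=b^2$ and numerators such as $aq/b_2b_3=bdq/c$. In (II.25), written as ${}_8W_7(a;b,c,d,e,f;q,q)$, the companion is ${}_8W_7(b^2/a;b,bc/a,bd/a,be/a,bf/a;q,q)$, built on the distinguished first parameter. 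The correct dictionary puts $c^2/deq$ in that slot: $(a;b,c,d,e,f)\mapsto(bc/q;\,c^2/deq,d,e,bz,b/z)$. This yields the base $c^3/bd^2e^2q$ and the numerators $c^2/deq,c/be,c/bd,cz/de,c/dez$. The values you list are right, but they do not follow from the formula you wrote.

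The normalization you flag as the main obstacle closes exactly. Under this dictionary $H(z)/H(b)$ is literally the product side of (II.25). Using only $(c/bde;q)_\infty/(bde/c;q)_\infty=-\frac{c}{bde}(cq/bde;q)_\infty/(bdeq/c;q)_\infty$, one gets
\[
\frac{B(z)K(c/de)}{A(z)H(b)}
=-\frac{c}{bde}\,
\frac{(bc,cq/bde,c^2/d^2e,c^2/de^2,c^2z/bde,c^2/bdez,d,e,bz,b/z;q)_\infty}
{(bdeq/c,bc/d,bc/e,cz,c/z,c/be,c/bd,cz/de,c/dez,c^3/bd^2e^2;q)_\infty}.
\]
Solving the divided identity for the first series therefore produces the companion coefficient of (II.25). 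In the letters of (II.25), with $b/a\mapsto c/bde$, that coefficient is
\[
+\frac{b}{a}\,\frac{(aq,bq/a,bq/c,bq/d,bq/e,bq/f,c,d,e,f;q)_\infty}{(aq/b,aq/c,aq/d,aq/e,aq/f,bc/a,bd/a,be/a,bf/a,b^2q/a;q)_\infty}.
\]
No stray theta quotient appears and no Weierstrass addition step is needed. Your proposed fallback check at $z=b$ would not have helped anyway: there $B(b)=0$ and the first series reduces to $f_0=1$, so that specialization carries no information about the second prefactor.
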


\begin{proof}
The preceding corollary proves the two-basis kernel identity \eqref{eq:two-basis-expanded} without using the nonterminating ${}_8\phi_7$ summation.  Solving the elementary substitution
\[
        a=bcq^{-1},\qquad
        (b_1,b_2,b_3,b_4,b_5)=\left(d,e,\frac{c^2}{deq},bz,\frac{b}{z}\right),
\]
rewrites \eqref{eq:two-basis-expanded} exactly as Bailey's nonterminating ${}_8W_7$ summation.  Thus the nonterminating summation is a consequence of the Taylor expansion and the terminating identities, not an input.
\end{proof}

\begin{remark}[summation inputs]
The direct proof avoids Bailey's \emph{nonterminating} very-well-poised ${}_8\phi_7$ summation.  Its summation inputs are Jackson's terminating ${}_8\phi_7$ summation, Rogers' nonterminating ${}_6\phi_5$ summation, the well-poised Cooper formula, and the Weierstrass addition formula \cite[Ch.~20]{WhittakerWatson1927}.
\end{remark}

\begin{remark}[classical verification from Bailey's summation]
Conversely, if Bailey's nonterminating ${}_8W_7$ summation is assumed with
\[
        a=bcq^{-1},\qquad
        (b_1,b_2,b_3,b_4,b_5)=\left(d,e,\frac{c^2}{deq},bz,\frac{b}{z}\right),
\]
then elementary rewriting identifies the left-hand side with \(F(z)\), the first very-well-poised series with the \(A(z)H(b)\)-prefactored \(\Phi\)-series, and the complementary product term with the involuted \(B(z)K(c/de)\)-prefactored \(\Psi\)-series.  Thus \eqref{eq:two-basis-identity} also follows directly from Bailey's summation; see \cite{Bailey1935} and \cite[Appendix (II.25)]{GasperRahman2004}.
\end{remark}

\section{Further examples and multi-kernel outlook}\label{sec:examples-outlook}

The quadratic products in \cref{thm:quadratic-remainder,thm:quadratic-companion-remainder} are genuinely one-family: their Taylor remainders are absolutely convergent tails tending to zero, and no two-grid annular cancellation or Bailey ${}_8\phi_7$ input is needed.

The two-basis kernel theorem itself is the main nontrivial example of the complementary-remainder mechanism.  By \cref{cor:deduce-bailey-8phi7}, it recovers Bailey's nonterminating ${}_8\phi_7$ summation from the Taylor expansion.  It is natural to ask for transformation analogues.  One direction is quadratic: a genuine two-kernel quadratic Taylor theory should not be obtained by the elementary sign specialization $e=-d$ of the present kernel, because that only folds Bailey's ordinary nonterminating ${}_8W_7$ summation.  Instead it should use kernels adapted to the quadratic Watson-type summations above and should close the corresponding annular estimates by terminating quadratic Watson or companion Watson--Dixon summations rather than by Rogers' ${}_6\phi_5$ summation.  Such a theory would be the natural framework for non-degenerate unilateral, and eventually bilateral, quadratic identities.

A second direction is the transformation-level analogue.  The finite elliptic Taylor calculation in \cite{Schlosser2008} applies the same philosophy to the Spiridonov--Zhedanov elliptic extension \cite{SpiridonovZhedanov2000} of Rahman's rational functions \cite{Rahman1986} and yields Frenkel--Turaev's ${}_{12}V_{11}$ transformation \cite{FrenkelTuraev1997} after a specialization of the free Taylor parameter.  At $p=0$, the analogous target is Bailey's four-term nonterminating ${}_{10}\phi_9$ transformation; standard references are Bailey \cite{Bailey1935} and Gasper--Rahman \cite{GasperRahman2004}.

The present two-kernel theorem is a summation-level result and does not by itself contain enough freedom to prove the four-term transformation.  A nonterminating ${}_{10}\phi_9$ transformation should instead arise from a connection-coefficient identity with an additional free parameter, or more naturally from several complementary kernels.  In that setting the three-term Weierstrass addition formula used in \cref{sec:annular-closure} should be replaced by the appropriate $r$-term specialization of Weierstrass' elliptic partial-fraction identity; see Whittaker--Watson \cite[Ch.~20]{WhittakerWatson1927} and Gasper--Rahman \cite[Ch.~11]{GasperRahman2004}.

Thus the natural continuation is a family of multi-kernel well-poised $q$-Taylor expansions, with complementary flat prefactors and residual cancellation governed either by quadratic terminating summations or by Weierstrass' elliptic partial-fraction identity.  This should be the basic counterpart of the elliptic ${}_{12}V_{11}$ derivation and the natural setting for both quadratic bilateral identities and Bailey's four-term nonterminating ${}_{10}\phi_9$ transformation.

\end{document}